    \title{Functorially finite hearts, simple-minded systems in negative cluster categories, and noncrossing partitions}
\author{Raquel Coelho Sim\~oes, David Pauksztello and David Ploog \\ with an appendix by Raquel Coelho Sim\~oes, David Pauksztello and Alexandra Zvonareva}
\newcommand{\harxiv}[1]{\href{http://arxiv.org/abs/#1}{\texttt{arXiv:#1}}}
\newcommand{\hyref}[2]{ \hyperref[#2]{#1~\ref*{#2}} }
\theoremstyle{plain}
\newtheorem{theorem}{Theorem}[section]
\newtheorem{lemma}[theorem]{Lemma}
\newtheorem{corollary}[theorem]{Corollary}
\newtheorem{proposition}[theorem]{Proposition}
\newtheorem{introtheorem}{Theorem}
\theoremstyle{definition}
\newtheorem{remark}[theorem]{Remark}
\newtheorem{example}[theorem]{Example}
\newtheorem{definition}[theorem]{Definition} % numbered definition.
\newtheorem{definitions}[theorem]{Definitions} % numbered definitions.
\newtheorem{setup}[theorem]{Setup}
\newcommand{\sC}{\mathsf{C}}
\newcommand{\sD}{\mathsf{D}}
\newcommand{\sE}{\mathsf{E}}
\newcommand{\sH}{\mathsf{H}}
\newcommand{\sP}{\mathsf{P}}
\newcommand{\sR}{\mathsf{R}}
\newcommand{\sS}{\mathsf{S}}
\newcommand{\sT}{\mathsf{T}}
\newcommand{\sU}{\mathsf{U}}
\newcommand{\sV}{\mathsf{V}}
\newcommand{\sX}{\mathsf{X}}
\newcommand{\sY}{\mathsf{Y}}
\newcommand{\sZ}{\mathsf{Z}}
\newcommand{\su}{\mathbf{u}}
\newcommand{\Db}{\mathsf{D}^b}
\newcommand{\Kb}{\mathsf{K}^b}
\newcommand{\Cw}{\sC_{-w}}
\newcommand{\Cm}{\sC_m}
\newcommand{\cC}{\mathcal{C}}
\newcommand{\cE}{\mathcal{E}}
\newcommand{\cF}{\mathcal{F}}
\newcommand{\Fw}{\cF_{-w}}
\newcommand{\ES}{\cE_\sS}
\newcommand{\bP}{\mathbb{P}}
\newcommand{\bZ}{\mathbb{Z}}
\renewcommand{\geq}{\geqslant}
\renewcommand{\leq}{\leqslant}
\renewcommand{\phi}{\varphi}
\renewcommand{\epsilon}{\varepsilon}
\DeclareMathOperator{\NC}{\mathsf{NC}}
\DeclareMathOperator{\Red}{\mathsf{red}}
\DeclareMathOperator{\Hom}{\mathsf{Hom}}
\DeclareMathOperator{\Ext}{\mathsf{Ext}}
\newcommand{\add}[1]{\mathsf{add}(#1)}
\newcommand{\ind}[1]{\mathsf{ind}(#1)}
\renewcommand{\mod}[1]{\mathsf{mod}(#1)}
\newcommand{\proj}[1]{\mathsf{proj}(#1)}
\newcommand{\coh}[1]{\mathsf{coh}(#1)}
\newcommand{\Qcoh}[1]{\mathsf{Qcoh}(#1)}
\newcommand{\inj}[1]{\mathsf{inj}(#1)}
\newcommand{\Mod}[1]{\mathsf{Mod}(#1)}
\newcommand{\supp}[1]{\mathsf{supp}(#1)}
\newcommand{\kk}{{\mathbf{k}}}
\newcommand{\SSS}{\mathbb{S}}
\newcommand{\extn}[1]{\langle #1 \rangle}
\newcommand{\extnZ}[1]{\{ #1 \}}
\newcommand{\orth}{^\perp}
\newcommand{\orthH}{^{\perp_{\sH}}}
\DeclareMathOperator{\susp}{\mathsf{susp}}
\DeclareMathOperator{\cosusp}{\mathsf{cosusp}}
\newcommand{\thick}[2]{\mathsf{thick}_{#1}(#2)}
\newcommand{\tA}[1]{\tilde{A}_{#1}}
\newcommand{\too}{\longrightarrow}
\newcommand{\rightlabel}[1]{\stackrel{#1}{\longrightarrow}}
\newcommand{\bij}{\stackrel{1-1}{\longleftrightarrow}}
\newcommand{\into}{\hookrightarrow}
\newcommand{\onto}{\twoheadrightarrow}
\newcommand{\tri}[3]{#1\rightarrow #2\rightarrow #3\rightarrow \Sigma #1}
\newcommand{\trilabel}[4]{#1\stackrel{#4}{\longrightarrow} #2\longrightarrow #3\longrightarrow \Sigma #1}
\newcommand{\trilabels}[6]{#1\stackrel{#4}{\longrightarrow} #2\stackrel{#5}{\longrightarrow} #3\stackrel{#6}{\longrightarrow} \Sigma #1}
\newenvironment{pmat}{\left[ \begin{smallmatrix}}{\end{smallmatrix} \right]}
\newcommand{\coloneqq}{\mathrel{\mathop:}=}
\tikzstyle arrowstyle=[scale=1]
\tikzstyle directed=[postaction={decorate,decoration={markings,
    mark=at position .65 with {\arrow[arrowstyle]{stealth}}}}]
\tikzstyle reverse directed=[postaction={decorate,decoration={markings,
    mark=at position .65 with {\arrowreversed[arrowstyle]{stealth};}}}]
\tikzstyle{box} = [rectangle, draw]
\tikzstyle{line} = [draw, -latex']
\begin{document}

\begin{abstract}
Let $Q$ be an acyclic quiver and $w \geq 1$ be an integer. Let $\Cw(\kk Q)$ be the $(-w)$-cluster category of $\kk Q$. We show that there is a bijection between simple-minded collections in $\Db(\kk Q)$ lying in a fundamental domain of $\Cw(\kk Q)$ and $w$-simple-minded systems in $\Cw(\kk Q)$. This generalises the same result of Iyama-Jin in the case that $Q$ is Dynkin.
 A key step in our proof is the observation that the heart $\sH$ of a bounded t-structure in a Hom-finite, Krull-Schmidt, $\kk$-linear saturated triangulated category $\sD$ is functorially finite in $\sD$ if and only if $\sH$ has enough injectives and enough projectives.
We then establish a bijection between $w$-simple-minded systems in $\Cw(\kk Q)$ and positive $w$-noncrossing partitions of the corresponding Weyl group $W_Q$.
\end{abstract}

\keywords{Simple-minded system, simple-minded collection, Calabi-Yau triangulated category, noncrossing partition, Riedtmann configuration}

\subjclass[2020]{18G05,18G80,16G10,05E10}
%05E10: Combinatorial aspects of representation theory
%16G10: Representations of Artinian rings
%18G05: Projectives and injectives (category theoretic aspects)
%18G80: Derived categories, triangulated categories

\maketitle

{\small
\setcounter{tocdepth}{1}
\tableofcontents
}

%============================================================================
% Introduction
%\addtocontents{toc}{\protect{\setcounter{tocdepth}{-1}}}  % No toc entry for Introduction
\section*{Introduction} 
%\addtocontents{toc}{\protect{\setcounter{tocdepth}{1}}}   % but enable toc entries for other sections
%============================================================================

Cluster-tilting theory was introduced in \cite{BMRRT} as an approach to categorifying the cluster algebras of Fomin and Zelevinsky in \cite{FZ}. Since its inception, cluster-tilting theory has gone on to have widespread connections with many areas of mathematics. 

Classical cluster-tilting theory takes place in an $m$-cluster category $\Cm(\kk Q)$, for $m \geq 2$, which is an $m$-Calabi-Yau orbit category of the bounded derived category, $\Db(\kk Q)$, of the path algebra of a finite acyclic quiver $Q$, where $\kk$ is an algebraically closed field.
One of the most fruitful connections has been between the representation theory of finite-dimensional algebras and Coxeter combinatorics. 
For example, let $W$ be the Weyl group of type $Q$. There are natural bijections between the sets of clusters and noncrossing partitions associated to $W$; see \cite{ABMW,Reading,STW}. 
There is in turn a bijection between the sets of clusters and cluster-tilting objects in the corresponding cluster category \cite{BRT}.
Beyond cluster-tilting theory, there are many further connections between the combinatorics of noncrossing partitions and representation theory, e.g.\ in the classification of wide subcategories and torsion theories \cite{Ingalls-Thomas}, thick subcategories \cite{Kohler} and Cartan lattices \cite{HK}, to name a few. For a broad treatment of the combinatorics of noncrossing partitions in finite type we refer the reader to \cite{STW}.

Recently, there has been increasing interest in negative Calabi-Yau triangulated categories; see, for example, \cite{Brightbill1,Brightbill2,CS12, CS15, CS17, CSP16, CSP20, Dugas, HJY, IJ, Jin1, Jin2, Koenig-Liu}. 
For an integer $w \geq 1$ and an acyclic quiver $Q$ there is an orbit category $\Cw(\kk Q)$ of the bounded derived category $\Db(\kk Q)$ which is $(-w)$-Calabi-Yau. This orbit category can be thought of as a `negative (Calabi-Yau) cluster category'. 
In this setting the analogue of $m$-cluster-tilting objects are so-called $w$-simple-minded systems. 
Evidence supporting the viewpoint that $w$-simple-minded systems are a negative Calabi-Yau analogue of cluster-tilting objects is advanced by a growing body of work: \cite{BRT, CS12, CS15, CS17, CSP20, Dugas, IJ, Jin1,Jin2}.

It is therefore natural to ask about connections between negative cluster categories and Coxeter combinatorics. Previous work in this direction includes \cite{BRT, CS12, IJ,STW}. Again let $Q$ be an acyclic quiver with corresponding Weyl group $W$. In \cite{CS12}, the first author established a bijection between so-called positive noncrossing partitions of $W$ and \mbox{($1$-)}simple-minded systems in $\sC_{-1}(\kk Q)$ when $Q$ is Dynkin. In \cite{BRT}, Buan, Reiten and Thomas obtained a bijection between simple-minded collections lying inside some `fundamental domain' and $w$-noncrossing partitions, which was a forefather of the K\"onig-Yang correspondences \cite{KY}. Most recently, in \cite{IJ}, Iyama and Jin generalised \cite{BRT} and \cite{CS12} to obtain a bijection between $w$-simple-minded systems in $\Cw(\kk Q)$ and positive $w$-noncrossing partitions, again for $Q$ Dynkin. This bijection proceeds via a bijection between $w$-simple-minded systems in $\Cw(\kk Q)$ and simple-minded collections lying in the fundamental domain $\Fw$ (see Section~\ref{sec:hereditary} for the precise definition) of $\Db(\kk Q)$; see \cite[Theorem 1.2]{IJ}.

In this article we extend this bijection to the case that $Q$ is an arbitrary quiver in the following main theorem.

\begin{introtheorem}[Theorem~\ref{thm:SMC-SMS-bijection}] \label{thm:bijection}
Let $Q$ be an acyclic quiver. The natural projection functor $\pi \colon \Db(\kk Q) \to \Cw(\kk Q)$ induces a bijection
\[ 
\left\{ 
\parbox{6.75cm}{\centering
simple-minded collections of $\Db(\kk Q)$ contained in $\Fw$}
\right\} 
\bij 
\left\{ 
\parbox{4.75cm}{\centering
$w$-simple-minded systems in $\Cw$}
\right\}. 
\]
\end{introtheorem}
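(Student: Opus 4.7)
My plan is to adapt the strategy that Iyama--Jin used in the Dynkin case, with the functorially finite hearts result highlighted in the abstract serving as the bridge to arbitrary acyclic $Q$. The argument splits into showing that $\pi$ sends simple-minded collections in $\Fw$ to $w$-simple-minded systems, constructing an explicit inverse by lifting to $\Fw$, and checking that the two assignments are mutually inverse.

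First, given an SMC $\sS \subset \Fw$, I would consider the heart $\sH = \sH(\sS)$ of the associated bounded t-structure on $\Db(\kk Q)$, so that $\sS$ is (up to isomorphism) the set of simples of $\sH$. Using the orbit-category Hom formula
\[
\Hom_{\Cw}(\pi X, \pi Y) \;=\; \bigoplus_{n\in\bZ}\Hom_{\Db(\kk Q)}(X, F^n Y),
\]
where $F$ is the autoequivalence defining $\Cw$, the containment $\sS \subset \Fw$ should force all summands with $n \neq 0$ to vanish in the relevant degree range. Consequently Schurness, pairwise orthogonality, and the vanishing $\Hom_{\Cw}(\pi\sS, \Sigma^{-i}\pi\sS) = 0$ for $0 < i < w$ transfer directly from the corresponding assertions in $\Db(\kk Q)$.

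The technically crucial step is the filtration/generation condition for an SMS: for every $X \in \Cw$ I must produce a finite filtration by shifts of $\pi(\sS)$ in degrees $0, -1, \ldots, -(w-1)$. Here I would invoke the functorially finite hearts theorem to deduce that $\sH$ has enough projectives and enough injectives, and use these to resolve an $\Fw$-representative $\widetilde X$ of $X$ by objects of $\sH$. The cohomology objects $H^i_{\sH}(\widetilde X)$, lying in $\sH$ and (by extension closure) in $\Fw$, admit composition series by $\sS$. Projecting to $\Cw$, the $\bZ$-indexed cohomology tower of $\widetilde X$ should collapse under the orbit identification $\pi F \cong \pi$ into a $w$-step filtration of $X$, because the range of degrees supported by $\Fw$ has exactly this width. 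For the inverse, I would assign to a $w$-SMS $\sR \subset \Cw$ the set $\widetilde \sR \subset \Fw$ of canonical lifts, unique because $\Fw$ is a strict fundamental domain for $F$. Orthogonality of $\widetilde\sR$ in the range $0 < i < w$ pulls back from $\Cw$ via the Hom formula; vanishing in the wider range needed for an SMC follows from the narrow cohomological window defining $\Fw$ together with the acyclicity of $Q$. Generation of $\Db(\kk Q)$ by $\widetilde\sR$ is inherited from the $\sR$-filtrations in $\Cw$, lifted layer by layer. That $\pi$ and this lifting are mutually inverse is then formal from the fundamental-domain property.

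The principal obstacle, compared with the Dynkin case, is the filtration step. In the Dynkin setting every bounded heart is automatically functorially finite and contains only finitely many indecomposables, so the collapse of the $\Db$-cohomology tower into a finite $\pi(\sS)$-filtration is immediate; neither feature survives for non-Dynkin $Q$. The functorially finite hearts theorem is precisely what supplies the missing projective and injective $\sH$-approximations, and the hard part will be verifying that the hearts $\sH(\sS)$ arising from SMCs inside $\Fw$ genuinely satisfy its hypotheses, so that every object of $\Cw$ receives a finite $\pi(\sS)$-filtration rather than merely an eventually periodic one.
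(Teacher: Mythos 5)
Your skeleton (transfer of orthogonality via the orbit Hom-formula, injectivity from the fundamental domain, the functorially finite hearts theorem as the new ingredient beyond Dynkin) matches the paper, but the two steps that carry the real weight are gapped. First, the proposed ``collapse'' of the $\sH(\sS)$-cohomology tower of a lift $\widetilde X \in \Fw$ into a filtration in degrees $0,-1,\ldots,1-w$ does not work: in $\Cw$ only $F=\Sigma^w\SSS$ is trivialised, not $\Sigma^w$ itself, so degrees of a filtration do not reduce modulo $w$, and an object of $\Fw$ need not lie in $\extn{\sS}*\Sigma^{-1}\extn{\sS}*\cdots*\Sigma^{1-w}\extn{\sS}$ upstairs. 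The paper avoids any such direct filtration argument: it uses the characterisation of $w$-simple-minded systems as left $w$-Riedtmann configurations whose extension closure is contravariantly finite (Proposition~\ref{prop:sms-char}), gets the Riedtmann property from Iyama--Jin, and uses the hearts theorem only to deduce that $\extn{\sS}_{\Db(\kk Q)}$ is functorially finite in $\Db(\kk Q)$ (Corollary~\ref{cor:extn-of-smc-funct-finite}); contravariant finiteness of $\extn{\pi\sS}_{\Cw}$ is then transported through the $F$-stable category $\ES=\extn{F^n\sS\mid n\in\bZ}$ (Lemmas~\ref{lem:extension-closure}, \ref{lem:ES-funct-finite}), using hereditariness to bound the relevant $F$-translates. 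Your use of ``enough projectives and injectives to resolve $\widetilde X$'' does not substitute for this: resolutions in $\sH(\sS)$ do not by themselves yield the $w$-step filtration in $\Cw$.

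Second, in the inverse direction the claim that generation of $\Db(\kk Q)$ is ``inherited from the $\sR$-filtrations in $\Cw$, lifted layer by layer'' is not valid: lifting a $\pi(\sR)$-filtration produces a filtration by objects $F^{n_i}R_i$ with varying $n_i$, which only shows $\Db(\kk Q)=\thick{\Db(\kk Q)}{\{F^n\sR\mid n\in\bZ\}}$; since $F=\Sigma^w\SSS$ does not preserve $\thick{\Db(\kk Q)}{\sR}$, this does not give $\thick{\Db(\kk Q)}{\sR}=\Db(\kk Q)$, which is precisely the hard point (compare the Kronecker example in Section~\ref{sec:bijection}, where an $\infty$-orthogonal lift of a $1$-Riedtmann configuration fails to generate). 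The paper's surjectivity proof instead first establishes that $\extn{\sR}_{\Db(\kk Q)}$ is functorially finite in $\Db(\kk Q)$ (Lemma~\ref{lem:extnS-contra-finite-in-E_S} together with Lemma~\ref{lem:ES-funct-finite} and the SMS hypothesis), then invokes Jin's result to obtain the t-structure $({}\orth(\Sigma^{<0}\sR),\cosusp\sR)$, and finally kills the aisle component of a sufficiently negatively shifted object by a support argument combined with the last step of Iyama--Jin's proof. Without an argument of this kind, your lifting step does not establish that the lifted collection is a simple-minded collection, so the surjectivity of $\pi$ remains unproven.
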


Using Theorem~\ref{thm:bijection}, we are then able to obtain the following bijection involving noncrossing partitions for any acyclic quiver $Q$.

\begin{introtheorem}[Theorem~\ref{thm:noncrossing}] \label{thm:nc}
Let $Q$ be an acyclic quiver. There is a bijection
\[
\left\{
\parbox{4.25cm}{\centering
positive $w$-noncrossing partitions of $W_Q$}
\right\}
\bij
\left\{ 
\parbox{4.75cm}{\centering
$w$-simple-minded systems in $\Cw(\kk Q)$}
\right\}. 
\]
\end{introtheorem}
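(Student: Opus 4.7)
My plan is to prove Theorem~\ref{thm:nc} by composing Theorem~\ref{thm:bijection} with a bijection
\[
\{\text{positive } w\text{-noncrossing partitions of } W_Q\} \bij \{\text{simple-minded collections of } \Db(\kk Q) \text{ contained in } \Fw\}.
\]
All new content is thereby concentrated in this intermediate correspondence; once it is in place, the projection functor $\pi \colon \Db(\kk Q) \to \Cw(\kk Q)$ ports everything across via Theorem~\ref{thm:bijection}.

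To construct the intermediate bijection I would adapt and extend the framework of Buan-Reiten-Thomas~\cite{BRT}, a forefather of the König-Yang correspondences~\cite{KY}, which associates to a simple-minded collection $\sS \subset \Db(\kk Q)$ a factorisation of a Coxeter element $c \in W_Q$ into reflections indexed by the objects of $\sS$. Grouping these reflections according to the cohomological slice of $\Fw$ occupied by each element of $\sS$ yields a decomposition $c = w_1 \cdots w_w$ in the absolute order on $W_Q$, that is, a $w$-noncrossing partition; the \emph{positive} $w$-noncrossing partitions correspond precisely to those decompositions in which every factor $w_i$ is non-trivial, which geometrically means that each cohomological slice of $\Fw$ receives at least one simple-minded object. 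In the reverse direction, a positive $w$-noncrossing partition is realised by an iterated HRS-tilting procedure inside $\Fw$, generalising the Iyama-Jin construction~\cite{IJ} for Dynkin $Q$ and the $w=1$ correspondence of the first author~\cite{CS12}.

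The principal obstacle lies in extending this BRT-style correspondence from the Dynkin setting of~\cite{IJ} to arbitrary acyclic $Q$. In the non-Dynkin case the combinatorics is enriched by the regular and preinjective components of $\mod{\kk Q}$, the absolute order on $W_Q$ is no longer a finite lattice, and $\Fw$ itself is infinite; moreover, the fundamental domain relevant to the \emph{negative} cluster category differs from the one used in~\cite{BRT}, so the argument cannot simply be quoted. Concretely, one must verify: (i) that the reflection factorisation of $\sS \subset \Fw$ behaves well with respect to the cohomological grading of $\Fw$; (ii) that every positive $w$-noncrossing partition is realised by some $\sS \subset \Fw$ via iterated HRS-tilts inside the fundamental domain, which must be shown to terminate at a bounded heart after $w$ steps; and (iii) that the two constructions are mutually inverse. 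Point (ii) is the delicate one, as it requires controlling each HRS-tilt so that it remains inside $\Fw$. Combined with Theorem~\ref{thm:bijection} --- whose proof rests on the functorial-finiteness criterion for hearts of t-structures developed earlier in the paper --- this will deliver Theorem~\ref{thm:nc}.
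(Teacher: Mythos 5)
Your top-level strategy is the same as the paper's: compose Theorem~\ref{thm:bijection} with a bijection between positive $w$-noncrossing partitions and simple-minded collections of $\Db(\kk Q)$ contained in $\Fw$ (this intermediate statement is precisely Theorem~\ref{thm:noncrossing}, and the BRT map between $\NC_w(W_Q)$ and simple-minded collections in $\sX \cap \Sigma^{w+1}\sY$ is available for arbitrary acyclic $Q$, so it does not need to be rebuilt). However, there is a genuine gap in how you propose to single out the positive partitions. You claim that positivity corresponds to ``every factor non-trivial'', i.e.\ to every cohomological slice of $\Fw$ containing an object of $\sS$. This is false in both directions. By definition, $(u_1,\ldots,u_{w+1})$ is positive when the product $u_2\cdots u_{w+1}$ lies in no proper parabolic subgroup; individual factors may perfectly well be trivial. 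For instance, in type $A_1$ with $w=1$ the pair $(e,c)$ is positive, and its image under $\phi$ is the simple module concentrated in degree $0$, which lies in $\cF_{-1}$ although the degree-one slice is empty; conversely, in type $A_2$ with $w=1$ and $c=s_1s_2$, the pair $(s_1,s_2)$ has both factors non-trivial but is not positive, since $u_2=s_2$ lies in the proper parabolic $\langle s_2\rangle$. So the dictionary you propose between positivity and the shape of $\sS$ inside $\Fw$ is not correct, and with it both the well-definedness of the restriction and its surjectivity collapse. (There is also a bookkeeping slip: a $w$-noncrossing partition has $w+1$ factors, matching the $w+1$ cohomological degrees $0,\ldots,w$ of $\sX\cap\Sigma^{w+1}\sY$, not $w$.)

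The missing idea is a sincerity/support translation rather than a nonemptiness condition. The paper proves (Theorem~\ref{thm:w-sincere}) that intersecting with $\sX\cap\Sigma^w\sY$ gives a bijection between simple-minded collections contained in $\Fw$ and exceptionally finite, $w$-sincere $\infty$-orthogonal collections, where $w$-sincere means the total cohomologies $H(S)$ form a sincere set of $\kk Q$-modules; on the Weyl group side, the support of these cohomologies governs which parabolic subgroup contains $u_2\cdots u_{w+1}$ (via Theorem~\ref{thm:coxeter-exceptional} and the fact, Theorem~\ref{thm:parabolic}, that $T$-reduced expressions of elements of a parabolic subgroup only involve reflections of that subgroup). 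Positivity of $\su$ is then exactly $w$-sincerity of the part of $\phi(\su)$ in degrees $0,\ldots,w-1$, which is what forces $\phi(\su)\subseteq\Fw$ and conversely. Moreover, the reverse direction (realising every positive partition, and injectivity) is not handled by iterated HRS tilting in the paper but by Jin's reduction of simple-minded collections (Proposition~\ref{prop:reduction}), perpendicular categories $\sE\orthH\simeq\mod{\kk Q'}$ (Lemma~\ref{lem:perp}), and Ringel's theorem that the only orthogonal complete exceptional sequence in $\mod{\kk Q'}$ consists of the simple modules; your tilting-based plan for step (ii) is left entirely open and would still need the sincerity mechanism above to know that the degree-$w$ objects produced avoid $\Sigma^w\inj{\kk Q}$. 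As it stands, the proposal does not contain the key argument.
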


In order to obtain Theorem~\ref{thm:bijection}, we require another observation that we believe holds independent interest and will be widely applicable. Let $\sD$ be a Hom-finite, Krull-Schmidt, $\kk$-linear triangulated category and suppose that $\sH \subseteq \sD$ is the heart of a bounded t-structure. One can ask what is the relationship between the following properties of $\sH$:
\begin{itemize}
\item $\sH$ is a length category with finitely many simple objects;
\item $\sH$ has enough injective objects and enough projective objects; and,
\item $\sH$ is a functorially finite subcategory of $\sD$.
\end{itemize}
Of these three properties, the final property is the odd one out: it is the only property which takes the ambient triangulated category in which $\sH$ sits into account. As such a relationship between these properties is potentially very powerful. Unfortunately, Example~\ref{ex:not-funct-finite} shows that there is no relationship, in general, between the first and the third properties. However, our third main theorem provides a relationship between the second and third properties of $\sH$. It is this relationship which is a crucial ingredient in our proof of Theorem~\ref{thm:bijection}.

\begin{introtheorem}[Corollary~\ref{cor:saturated}] \label{thm:functorially-finite}
Let $\sD$ be a Hom-finite, Krull-Schmidt, saturated triangulated category. Suppose $\sH \subseteq \sD$ is the heart of a bounded t-structure. Then $\sH$ is functorially finite in $\sD$ if and only if $\sH$ has enough injectives and enough projectives.
\end{introtheorem}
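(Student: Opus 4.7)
The plan is to treat the two implications separately. The $(\Leftarrow)$ direction is essentially constructive and does not require saturation; the $(\Rightarrow)$ direction is the technical heart of the proof and uses the Serre functor that saturation provides.

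For $(\Leftarrow)$, I would build approximations directly from injective and projective resolutions in $\sH$. Given $H \in \sH$ and an injective embedding $H \hookrightarrow I$ in $\sH$, the cokernel $C = I/H$ lies in $\sH$ and yields a triangle $H \to I \to C \to \Sigma H$ in $\sD$. The map $C \to \Sigma H$ is a right $\sH$-approximation because, for any $H' \in \sH$, the long exact sequence for $\Hom_\sD(H', -)$ supplies the required surjectivity via the vanishing $\Hom_\sD(H', \Sigma I) \cong \Ext^1_\sH(H', I) = 0$, using injectivity of $I$ and the standard identification of first Ext in the heart with $\Hom_\sD(-, \Sigma -)$. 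Iterating this construction along an injective resolution of $H$, together with octahedral arguments, extends it to right $\sH$-approximations of every $\Sigma^n H$ for $n \geq 1$; for $n \leq 0$, the t-structure axiom $\Hom_\sD(\sD^{\leq 0}, \sD^{\geq 1}) = 0$ makes the zero morphism a trivial approximation. A final octahedral step shows that the class of objects admitting a right $\sH$-approximation is extension-closed, and boundedness of the t-structure then propagates the property from shifts of heart objects to all of $\sD$. The dual construction with projective covers furnishes left $\sH$-approximations.

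For $(\Rightarrow)$, I would apply the Bondal-Kapranov theorem to extract a Serre functor $\SSS$ on $\sD$ from saturation, and then combine $\SSS$ with functorial finiteness to produce injective envelopes. For $H \in \sH$, functorial finiteness yields a right $\sH$-approximation $f \colon I \to \SSS H$, and Serre duality identifies $\Hom_\sD(X, \SSS H) \cong D\Hom_\sD(H, X)$ for $X \in \sD$; restricting to $X \in \sH$, the approximation property delivers a natural surjection $\Hom_\sH(-, I) \twoheadrightarrow D\Hom_\sH(H, -)$ of contravariant functors on $\sH$. Upgrading this to an isomorphism identifies $I$ as a representing object for $D\Hom_\sH(H, -)$, and the symmetric construction via left $\sH$-approximations of $\SSS^{-1} H$ produces the corresponding projective covers. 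Together these constructions equip $\sH$ with a relative Serre structure which, by a standard argument for Hom-finite Krull-Schmidt abelian categories, identifies $\sH$ with the module category of a finite-dimensional $\kk$-algebra and in particular produces enough injectives and projectives.

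The main obstacle is precisely the upgrade from surjection to isomorphism, equivalently showing that the fibre $K$ of a minimal right $\sH$-approximation $f \colon I \to \SSS H$ satisfies $\Hom_\sD(\sH, K) = 0$. I would address this by working with the minimal right approximation (available in the Hom-finite Krull-Schmidt setting) and using boundedness of the t-structure to locate the fibre in a part of $\sD$ invisible to $\sH$; minimality rules out spurious summands of $I$, and the combined use of t-structure truncations and the cohomological nature of $\Hom_\sD(H', -)$ on $\sH$ should produce the required vanishing.
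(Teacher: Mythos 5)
Your proposal locates the use of saturation in the wrong direction, and the direction you call ``essentially constructive and not requiring saturation'' is exactly where the real difficulty -- and a genuine gap -- lies. In your $(\Leftarrow)$ argument the base step is fine: for $H \into I$ with cokernel $C$, the map $C \to \Sigma H$ is a right $\sH$-approximation of $\Sigma H$ because $\Hom_\sD(\sH,\Sigma I) \cong \Ext^1_\sH(\sH,I) = 0$. But the iteration to $\Sigma^n H$ for $n \geq 2$ and the claimed extension-closure of the class of objects admitting right $\sH$-approximations both break down: the obstructions in the relevant octahedral gluings live in $\Hom_\sD(\sH,\Sigma^n I)$ for $n \geq 2$, and injectivity of $I$ \emph{in the heart} gives no control over these groups, since $\Hom_\sD(-,\Sigma^n-)$ restricted to $\sH$ is not computed by $\Ext^n_\sH$ for $n\geq 2$ unless $\sD$ happens to be the derived category of $\sH$. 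The class of objects admitting a right $\sH$-approximation is not extension-closed by any purely octahedral argument; what one needs is approximation triangles whose cones lie in a subcategory closed under extensions \emph{and positive shifts}, i.e.\ in $\sY\orth$ rather than merely satisfying $\Hom(\sH,-)=0$. This is precisely the role of the injective coheart $\sC = \sY\orth\cap\Sigma\sY$ in Theorem~\ref{thm:heart-finite}: ``enough injectives'' must be supplemented by the condition that every injective of $\sH$ embeds into $H^0(c)$ for some $c\in\sC$, and it is exactly here that saturation enters in Corollary~\ref{cor:saturated} -- representability of the finite-type cohomological functor $\Hom_\sH(H^0(-),e)$ produces such a $c$, after which a right adjacent co-t-structure is built by induction along the boundedness filtration and contravariant finiteness of $\sH$ follows. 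Your sketch skips this entirely (this is essentially the gap in the authors' original argument acknowledged in the paper), so the $(\Leftarrow)$ direction is not established.

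Conversely, your $(\Rightarrow)$ direction is the easy implication and needs neither the Serre functor nor your unresolved ``upgrade from surjection to isomorphism''. The paper's argument is short: take a minimal right $\sH$-approximation of $\Sigma h$ and complete it to a triangle $h' \to \Sigma h \to \Sigma e \to \Sigma h'$; the triangulated Wakamatsu lemma gives $\Sigma e \in \sH\orth$, rotation and extension-closure of $\sH$ give $e \in \sH$, hence $e \in \sH\cap(\Sigma^{-1}\sH)\orth$ is injective and $0\to h\to e\to h'\to 0$ is an injective envelope; dually for projectives. Your Serre-duality route, besides being much heavier, leaves its central step (the vanishing $\Hom_\sD(\sH,K)=0$ for the fibre of $I \to \SSS H$) as an admitted obstacle with only a heuristic plan, and even granting it, representing $D\Hom_\sH(H,-)$ by an injective object does not by itself produce a monomorphism from each object of $\sH$ into an injective, nor the asserted identification of $\sH$ with $\mod{A}$. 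So both halves need repair: the forward direction should be replaced (or completed) by the Wakamatsu argument, and the backward direction genuinely requires the injective/projective coheart mechanism that saturation provides.
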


In this statement we have specialised to the case that $\sD$ is a saturated triangulated category, for example, $\sD = \Db(A)$ for a finite-dimensional $\kk$-algebra $A$ of finite global dimension or $\sD = \Db(\coh{X})$ for a smooth projective variety $X$. A more general, technical statement without the saturated hypothesis is proved in Theorem~\ref{thm:heart-finite}.

We briefly sketch the structure of the paper. In Section~\ref{sec:prelim}, we recall the basic concepts we use in the paper. In Section~\ref{sec:funct-finite} we prove Theorem~\ref{thm:functorially-finite}. Section~\ref{sec:simple-minded} recalls the basic properties of orthogonal collections, simple-minded systems and simple-minded collections, and establishes a characterisation of simple-minded collections in terms of Riedtmann configurations that may be more widely applicable and illustrates the parallel between simple-minded collections and simple-minded systems. In Section~\ref{sec:bijection}, we prove our main theorem, Theorem~\ref{thm:bijection}.
Section~\ref{sec:sincere} establishes a bijection between $w$-simple-minded systems and certain sincere orthogonal collections which is the crucial tool to pass from Theorem~\ref{thm:bijection} to Theorem~\ref{thm:nc}, which is done in Section~\ref{sec:noncrossing}.
Finally, the paper includes an appendix by the first two authors and Alexandra Zvonareva in which we give an alternative proof of a theorem by Jin \cite{Jin2} on the reduction of simple-minded collections using the characterisation in terms of Riedtmann configurations, which avoids the passage to a Verdier localisation, and is more in the spirit of \cite{CSP20}.

\subsection*{Notation convention}
In abstract abelian and triangulated categories we will use lower case Roman letters to denote objects. When we specialise to module categories or derived categories of an algebra, we will use upper case to denote objects. The philosophy behind this is that in the latter case, these objects have elements.

\subsection*{Acknowledgments}
We would like to thank Lidia Angeleri H\"ugel, Jorge Vit\'oria and Alexandra Zvonareva for useful discussions. 
We particularly thank Alexandra Zvonareva for kindly allowing us to add her joint work with the first two authors as an appendix.
The authors are grateful to Peter J\o rgensen and Haruhisa Enomoto for pointing out a gap in our original proof of Theorem~\ref{thm:functorially-finite}, and an anonymous referee for useful comments and suggestions.
The first author is grateful to the European Union's Horizon 2020 research and innovation programme for financial support through the Marie Sk\l odowska-Curie Individual Fellowship grant agreement number 838706.

%============================================================================
% SECTION
\section{Preliminaries} \label{sec:prelim}
%============================================================================

To begin with $\sD$ will be a Hom-finite, Krull-Schmidt, $\kk$-linear triangulated category over a field $\kk$, where \emph{Hom-finite} means that $\Hom_\sD(x,y)$ is finite-dimensional as a $\kk$-vector space for any objects $x$ and $y$ in $\sD$. Starting from Section~\ref{sec:simple-minded}, $\kk$ will be assumed to be algebraically closed. The shift or suspension functor will be denoted by $\Sigma \colon \sD \to \sD$. Later we will specialise to the case that $\sD = \Db(\kk Q)$ for a finite acyclic quiver $Q$. Abusing notation, if $\sX$ is a collection of objects of $\sD$ and $d$ is an object of $\sD$ we will write $\Hom_\sD(\sX,d)$ to mean $\Hom_\sD(x,d)$ where we take each $x \in \sX$ in turn; likewise for $\Hom_\sD(d,\sX)$. We shall assume all subcategories are full and strict.

For subcategories $\sX$ and $\sY$ of $\sD$, we write
\[
\sX * \sY = \{ d \in \sD \mid \text{there exists a triangle } \tri{x}{d}{y} \text{ with } x \in \sX \text{ and } y \in \sY\}.
\]
We note that by the octahedral axiom, the $*$ product of subcategories is associative. A subcategory $\sX$ is \emph{extension-closed} if $\sX * \sX = \sX$. We denote by $\extn{\sX}$, or sometimes by $\extn{\sX}_\sD$ when we need to emphasise the triangulated category in which we are working, the \emph{extension closure} of $\sX$, that is the smallest extension-closed subcategory of $\sD$ containing $\sX$.
The \emph{right and left perpendicular categories} of $\sX$ are defined as follows:
\[
 \sX\orth = \{ d \in \sD \mid \Hom(\sX,d) = 0 \}
 \text{ and }
 {}\orth \sX = \{ d \in \sD \mid \Hom(d,\sX) = 0 \}.
 \]
In Section~\ref{sec:sincere} we will also require a notion of perpendicular category that is more suited to abelian categories; see Definition~\ref{def:perp}.
 
An autoequivalence $\SSS \colon \sD \to \sD$ is called a Serre functor if for each $x,y \in \sD$ there is an isomorphism,
 \[
 \Hom(x,y) \simeq D\Hom(y, \SSS x),
 \]
which is natural in $x$ and $y$, where $D = \Hom_\kk(-,\kk)$. If $\sD$ has a Serre functor, it is unique up to isomorphism and we say $\sD$ satisfies \emph{Serre duality}. For details we refer to \cite{RvdB}.

Let $w\in \bZ$. A triangulated category $\sD$ satisfying Serre duality is \emph{$w$-Calabi-Yau} (or \emph{$w$-CY}) if there is a natural isomorphism $\SSS \simeq \Sigma^w$, where $\SSS$ is the Serre functor on $\sD$.

\subsection{Functorially finite subcategories and (co-)t-structures}
Let $\sX$ be a subcategory of $\sD$, and $d$ an object in $\sD$. A morphism $f \colon x \to d$, with $x \in \sX$, is a \emph{right $\sX$-approximation} (or an \emph{$\sX$-precover}) of $d$ if $\Hom(\sX, f) \colon \Hom(\sX,x) \to \Hom(\sX,d)$ is surjective.
The morphism $f$ is called \emph{right minimal} if any $g \colon x \to x$ satisfying $fg = f$ is an automorphism.
The morphism $f$ is called a \emph{minimal right $\sX$-approximation} (or an \emph{$\sX$-cover}) of $d$ if it is a right $\sX$-approximation of $d$ and is right minimal.
If $f \colon x \to d$ is a minimal right $\sX$-approximation and $h \colon x' \to d$ is a right $\sX$-approximation then $x$ is isomorphic to a direct summand of $x'$; e.g.\ \cite{AS}.

If every object in $\sD$ admits a right $\sX$-approximation, then $\sX$ is said to be \emph{contravariantly finite} or \emph{precovering}. If every object in $\sD$ admits a minimal right $\sX$-approximation, then $\sX$ is said to be \emph{covering}. 
There are dual notions of \emph{(minimal) left $\sX$-approximations} (or \emph{$\sX$-pre-envelopes} and \emph{$\sX$-envelopes}) and \emph{covariantly finite} (or \emph{(pre-)enveloping}) subcategories. The subcategory $\sX$ of $\sD$ is called \emph{functorially finite} if it is both contravariantly finite and covariantly finite.
We note that if $\sD$ is Hom-finite and Krull-Schmidt then any precovering (resp.\ pre-enveloping) subcategory is automatically covering (resp.\ enveloping); e.g.\ \cite{AS}.

\begin{definition}
A pair of full subcategories $(\sX,\sY)$ of $\sD$, each closed under summands, such that $\Hom_\sD(\sX,\sY) = 0$ and $\sD = \sX * \sY$ is called a
\begin{itemize}
\item \emph{t-structure} if in addition $\Sigma \sX \subseteq \sX$ (equivalently, $\Sigma^{-1} \sY \subseteq \sY$). 
The \emph{heart} of $(\sX,\sY)$,  $\sH = \sX \cap \Sigma \sY$,  is an abelian category \cite{BBD}; 
\item \emph{co-t-structure} \cite{Pauk} (or \emph{weight structure} \cite{Bondarko10}) if in addition $\Sigma^{-1} \sX \subseteq \sX$ (equivalently, $\Sigma \sY \subseteq \sY)$.
Its coheart $\sS = \Sigma \sX \cap \sY$ is a \emph{presilting subcategory}, i.e. $\Hom_\sD(\sS,\Sigma^{>0} \sS) = 0$, see, e.g. \cite{AI}. 
\end{itemize}
\end{definition}

A (co-)t-structure is \emph{bounded} if $\sD = \bigcup_{i \in \bZ} \Sigma^i \sX = \bigcup_{i \in \bZ} \Sigma^i \sY$.
A co-t-structure is bounded if and only if its coheart $\sS$ is a \emph{silting subcategory}, i.e. $\sS$ is presilting and the thick subcategory of $\sD$ generated by $\sS$ is $\sD$ \cite[Theorem 4.20]{MSSS}.

A co-t-structure $(\sU,\sV)$ is said to be \emph{left adjacent} to a t-structure $(\sX,\sY)$ if $\sV = \sX$ \cite{Bondarko10}. Analogously, $(\sU,\sV)$ is \emph{right adjacent} to $(\sX,\sY)$ if $\sU = \sY$.

Note that for a t-structure $(\sX,\sY)$, the inclusion $\sX \to \sD$ has a right adjoint and the inclusion $\sY \to \sD$ has a left adjoint; these are given by the truncation functors. Therefore, the subcategory $\sX$ is always contravariantly finite in $\sD$ and the subcategory $\sY$ is always covariantly finite in $\sD$, and the approximations are even functorial.
In contrast, if $(\sU,\sV)$ is a co-t-structure in $\sD$ then $\sU$ is contravariantly finite in $\sD$ and $\sV$ is covariantly finite in $\sD$, but these approximations need not be functorial.

We recall the following standard characterisation of bounded t-structures; see, for example, \cite[Lemma 3.2]{Bridgeland}.

\begin{lemma}
Let $(\sX,\sY)$ be a t-structure in $\sD$ with heart $\sH$. The following conditions are equivalent:
\begin{enumerate}
\item the t-structure $(\sX,\sY)$ is bounded;
\item $\sX = \bigcup_{n \geq 0} \Sigma^n \sH * \Sigma^{n-1} \sH * \cdots * \sH$ and $\sY = \bigcup_{n <0} \Sigma^{-1} \sH * \Sigma^{-2} \sH * \cdots * \Sigma^n \sH$;
\item $\sD = \bigcup_{n \geq m} \Sigma^n \sH * \Sigma^{n-1} \sH * \cdots * \Sigma^{m+1} \sH * \Sigma^m \sH$.
\end{enumerate}
\end{lemma}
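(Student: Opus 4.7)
The plan is to prove the cycle $(1) \Rightarrow (2) \Rightarrow (3) \Rightarrow (1)$. The implications $(2) \Rightarrow (3)$ and $(3) \Rightarrow (1)$ are essentially formal consequences of associativity of the $*$-product and extension-closure of the aisles, while the main work will be in $(1) \Rightarrow (2)$.

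For $(2) \Rightarrow (3)$, I would combine the two descriptions of $\sX$ and $\sY$ using $\sD = \sX * \sY$ together with associativity of $*$. For $(3) \Rightarrow (1)$, I would use $\Sigma \sX \subseteq \sX$ to obtain $\Sigma^i \sH \subseteq \Sigma^m \sX$ for $i \geq m$, and $\sH \subseteq \Sigma \sY$ combined with $\sY \subseteq \Sigma \sY$ to obtain $\Sigma^i \sH \subseteq \Sigma^{n+1} \sY$ for $i \leq n$. Since $\sX$ and $\sY$ are extension-closed (as aisles of a t-structure), any $d$ lying in $\Sigma^n \sH * \cdots * \Sigma^m \sH$ must then lie in $\Sigma^m \sX \cap \Sigma^{n+1} \sY$, proving boundedness.

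For the main implication $(1) \Rightarrow (2)$, let $d \in \sX$; boundedness supplies some $k \geq 1$ with $d \in \Sigma^k \sY$, and I will induct on $k$. The base case $k = 1$ is immediate because $d \in \sX \cap \Sigma \sY = \sH$. For $k > 1$, I would apply the truncation of the shifted t-structure $(\Sigma \sX, \Sigma \sY)$ to obtain a triangle $e \to d \to h \to \Sigma e$ with $e \in \Sigma \sX$ and $h \in \Sigma \sY$. Since $\Sigma e \in \Sigma^2 \sX \subseteq \sX$ and $d \in \sX$, extension-closure of $\sX$ yields $h \in \sX$, whence $h \in \sX \cap \Sigma \sY = \sH$. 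The rotated triangle $\Sigma^{-1} h \to e \to d$, combined with $\Sigma^{-1} h \in \Sigma^{-1} \sH \subseteq \sY \subseteq \Sigma^k \sY$ and with $d \in \Sigma^k \sY$, then places $e \in \Sigma \sX \cap \Sigma^k \sY = \Sigma(\sX \cap \Sigma^{k-1}\sY)$ by extension-closure of $\Sigma^k \sY$. Writing $e = \Sigma e'$, the inductive hypothesis gives $e' \in \Sigma^{k-2} \sH * \cdots * \sH$, so $d$ lies in $\Sigma^{k-1} \sH * \cdots * \sH$. The description of $\sY$ follows by the dual argument.

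The only mildly delicate point will be pinning down that the truncated object $e$ lies not merely in $\Sigma \sX$ but simultaneously in $\Sigma^k \sY$; this uses both the rotated triangle and the small observation $\Sigma^{-1}\sH \subseteq \sY$. Beyond that, the whole argument is a clean induction driven by truncation triangles of shifts of $(\sX,\sY)$.
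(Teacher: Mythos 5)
Your argument is correct. Note that the paper does not prove this lemma at all: it is quoted as a standard fact with a reference to Bridgeland's Lemma 3.2, so there is no in-text proof to compare against. Your cycle $(1)\Rightarrow(2)\Rightarrow(3)\Rightarrow(1)$ is exactly the standard argument: $(2)\Rightarrow(3)$ by splicing $\sD=\sX*\sY$ with associativity of $*$, $(3)\Rightarrow(1)$ by pushing each factor $\Sigma^i\sH$ into $\Sigma^m\sX$ resp.\ $\Sigma^{n+1}\sY$ and using extension-closure, and the real content in $(1)\Rightarrow(2)$, where your induction on $k$ with $d\in\sX\cap\Sigma^k\sY$ via the truncation triangle of the shifted t-structure $(\Sigma\sX,\Sigma\sY)$ is carried out correctly: the bookkeeping $h\in\sX\cap\Sigma\sY=\sH$ and $e\in\Sigma\sX\cap\Sigma^k\sY=\Sigma(\sX\cap\Sigma^{k-1}\sY)$ is precisely the delicate point, and you handle it properly using $\Sigma^{-1}\sH\subseteq\sY$ and extension-closure of $\Sigma^k\sY$. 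Two small things to make explicit in a final write-up: first, condition $(2)$ asserts equalities, and you only prove the inclusions $\sX\subseteq\bigcup_{n\geq0}\Sigma^n\sH*\cdots*\sH$ and (dually) for $\sY$; the reverse inclusions are immediate from $\Sigma^i\sH\subseteq\sX$ for $i\geq0$, $\Sigma^i\sH\subseteq\sY$ for $i<0$, and extension-closure, but they should be stated. Second, the extension-closure of $\sX$ and $\sY$ that you invoke repeatedly deserves a one-line justification (e.g.\ $\sX={}\orth\sY$ and $\sY=\sX\orth$, which uses closure under direct summands from the definition of a t-structure).
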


\begin{definition}
A bounded t-structure $(\sX,\sY)$ in $\sD$ with heart $\sH$ is called \emph{algebraic} if $\sH$ is a length category (every object has finite length, i.e.\ is Artinian as well as Noetherian) with only finitely many simple objects.
\end{definition} 

For example, the property of being algebraic holds for bounded t-structures whose hearts are module categories over finite-dimensional algebras.

\subsection{Hereditary algebras and negative cluster categories} \label{sec:hereditary}
For this section, $\sD = \Db(\kk Q)$ for some finite acyclic quiver $Q$. The main reference for the structure of derived categories of hereditary algebras (equivalently, path algebras of acyclic quivers) is \cite{Happel}. 

Recall that an algebra $A$ is \emph{hereditary} if it is of global dimension $0$ or $1$, i.e.\ if the bifunctors $\Ext^n_A(-,-)$ are zero for $n \geq 2$.
Typical examples are the path algebras $A=\kk Q$.
A well-known lemma says that each object of $\Db(\kk Q)$ decomposes as a direct sum of its cohomology. In particular, its Auslander--Reiten (AR) quiver has the following form:

\begin{center}
\begin{tikzpicture}
%\draw[help lines] (-6,-1) grid [step=0.5] (6,3);

%horizontal lines
\draw (-6,2) -- (6,2);
\draw (-6,0) -- (6,0);

%vertical lines
\draw (-4.5,0) -- (-4.5,2);
\draw (-1.5,0) -- (-1.5,2);
\draw (1.5,0) -- (1.5,2);
\draw (4.5,0) -- (4.5,2);

%labels
\node at (-5,1) {$\cdots$};
\node at (-3,1) {$\Sigma^{-1} \mod{\kk Q}$};
\node at (0,1) {$\mod{\kk Q}$};
\node at (3,1) {$\Sigma \mod{\kk Q}$};
\node at (5,1) {$\cdots$};

\end{tikzpicture}
\end{center}

Morphisms go from left to right, and since for two modules $M, N \in \mod{\kk Q}$, one has $\Hom_{\Db(\kk Q)}(M,\Sigma^n N) = \Ext^n_{\kk Q}(M,N)$, nonzero morphisms exist only from one degree to the next and not any higher.

In this article we will be interested in a certain orbit category of $\Db(\kk Q)$ which is constructed as follows; see \cite{BMRRT}. Let $w \geq 1$ and $F = \Sigma^w \SSS$. We define the category
\[
\Cw = \Cw(\kk Q) \coloneqq \Db(\kk Q)/F,
\]
with the same objects as $\sD$ and whose morphisms are given by
\[
\Hom_{\Cw}(x,y) = \bigoplus_{n \in \bZ} \Hom_{\Db(\kk Q)}(x, F^n y).
\]
We write $\pi \colon \Db(\kk Q) \to \Cw$ for the natural projection functor.
The orbit category $\Cw$ is a triangulated category (see \cite{Keller}) which is $(-w)$-Calabi-Yau; this uses that the algebra $\kk Q$ is hereditary.
Compare $\Cw$ with the construction of the \emph{$m$-cluster category}, $\Db(\kk Q)/\Sigma^{-m}\SSS$, for $m \geq 2$, which is an $m$-Calabi-Yau triangulated category. 
As such we will refer to $\Cw$ as a \emph{negative cluster category} or a \emph{$(-w)$-cluster category}.

It is often convenient to compute inside $\Cw$ using a so-called fundamental domain in $\Db(\kk Q)$. Let $(\sX,\sY)$ be the standard t-structure in $\Db(\kk Q)$, i.e.\ the t-structure with $\sX = \sD^{\geq0}(\kk Q), \sY = \sD^{<0}(\kk Q)$ and heart $\sH = \mod{\kk Q}$. We define the \emph{fundamental domain} of $\Cw$ to be
\[
\Fw \coloneqq \sX \cap \Sigma^w \SSS \sY.
\]
The natural projection functor $\pi \colon \Db(\kk Q) \to \Cw$ induces a bijection
\[
\{\text{indecomposable objects in } \Fw\} \bij \{\text{indecomposable objects of } \Cw\}.
\]
Below we give a schematic of the fundamental domain $\Fw$ inside $\Db(\kk Q)$,

\begin{center}
\begin{tikzpicture}
%\draw[help lines] (-6,-2) grid [step=0.5] (9,3);

%injectives
\fill[gray] (7,0) -- (7.5,0) -- (7.5,2) -- (7,2) -- cycle;

%braces
\draw[decoration={brace},decorate] (-4.5,2.5) --  (9,2.5);
\draw[decoration={brace,mirror},decorate] (-6,-0.5) -- (7,-0.5);
\draw[decoration={brace,mirror},decorate] (-4.5,-1.5) -- (7,-1.5);

%horizontal lines
\draw (-6,2) -- (9,2);
\draw (-6,0) -- (9,0);

%vertical lines
\draw (-4.5,0) -- (-4.5,2);
\draw (-1.5,0) -- (-1.5,2);
\draw (1.5,0) -- (1.5,2);
\draw (4.5,0) -- (4.5,2);
\draw (7,0) -- (7,2);
\draw (7.5,0) -- (7.5,2);

%labels
\node at (-5,1) {$\cdots$};
\node at (-3,1) {$\mod{\kk Q}$};
\node at (0,1) {$\cdots$};
\node at (3,1) {$\Sigma^{w-1} \mod{\kk Q}$};
\node at (5.75,1) {$\Sigma^w \mod{\kk Q}$};
\node at (8,1) {$\cdots$};
\node at (2.25,3) {$\sX$}; 
\node at (0.5,-1) {$\Sigma^w \SSS \sY$};
\node at (1.25,-2) {$\Fw$};

\end{tikzpicture}
\end{center}

\noindent
where the shaded grey region on the right-hand side comprises $\Sigma^w \inj{\kk Q}$, where $\inj{\kk Q}$ denotes the full subcategory of injective $\kk Q$-modules.

Finally, working inside $\Fw$ allows us to compute Hom spaces easily.

\begin{lemma}[{\cite[Lemma 3.4]{IJ}}] \label{lem:morphisms}
Suppose $x,y \in \Fw$ and $0 \leq i \leq w$. Then
\[
\Hom_{\Cw}(x,\Sigma^{-i} y) = \Hom_{\Db(\kk Q)}(x,\Sigma^{-i}y) \oplus D\Hom_{\Db(\kk Q)}(y, \Sigma^{i-w} x).
\]
\end{lemma}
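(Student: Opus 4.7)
The plan is to compute $\Hom_{\Cw}(x,\Sigma^{-i}y)$ directly from the definition of the orbit category. By construction,
\[
\Hom_{\Cw}(x,\Sigma^{-i}y) \;=\; \bigoplus_{n\in\bZ}\Hom_{\Db(\kk Q)}(x,\Sigma^{nw-i}\SSS^n y),
\]
and the goal is to show that only the $n=0$ and $n=1$ summands are nonzero and to identify them with the two terms on the right of the claimed formula. The $n=0$ summand is $\Hom_{\Db(\kk Q)}(x,\Sigma^{-i}y)$ by inspection. For $n=1$, I would apply Serre duality to $\Hom_{\Db(\kk Q)}(x,\Sigma^{w-i}\SSS y)$, obtaining $D\Hom_{\Db(\kk Q)}(\SSS y,\Sigma^{i-w}\SSS x)$, and then strip off the outer $\SSS$'s using that $\SSS$ is an autoequivalence commuting with $\Sigma$ to arrive at $D\Hom_{\Db(\kk Q)}(y,\Sigma^{i-w}x)$.

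The crux of the argument is the vanishing of $\Hom_{\Db(\kk Q)}(x,F^n\Sigma^{-i}y)$ for $n\notin\{0,1\}$, and the key input is the inclusion
\[
F^{-1}\sY \;=\; \Sigma^{-w}\SSS^{-1}\sY \;\subseteq\; \sY.
\]
The inclusion $\Sigma^{-w}\sY\subseteq \sY$ follows from the t-structure axioms, while $\SSS^{-1}\sY\subseteq\sY$ uses the hereditary structure of $\kk Q$: for an indecomposable module $M$, $\SSS M$ is a shifted module concentrated in a single cohomological degree, either $0$ (when $M$ is projective) or $1$ (otherwise). Hence $\SSS^{\pm 1}$ moves cohomological support by at most one position, and $\SSS^{-1}$ preserves $\sY=\sD^{<0}$. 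Since $x,y\in\Fw = \sX\cap F\sY$, both $F^{-1}x$ and $F^{-1}y$ lie in $\sY$.

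For $n\leq -1$, induction on $n$ using $F^{-1}\sY\subseteq\sY$ gives $F^n y\in\sY$, whence $F^n\Sigma^{-i}y=\Sigma^{-i}F^n y\in\sY$; the t-structure orthogonality $\Hom_{\Db(\kk Q)}(\sX,\sY)=0$ with $x\in\sX$ kills this summand. For $n\geq 2$, I would first apply Serre duality and then, using $F^{-n}\SSS x = \Sigma^{-w}F^{1-n}x$ (which follows from $\SSS=\Sigma^{-w}F$ together with $F\Sigma=\Sigma F$) and the $F$-adjunction, rewrite the summand as $D\Hom_{\Db(\kk Q)}(y,\Sigma^{i-w}F^{1-n}x)$. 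Iterating $F^{-1}\sY\subseteq\sY$ places $F^{1-n}x\in\sY$, and hence $\Sigma^{i-w}F^{1-n}x\in\sY$ (since $i-w\leq 0$); the same orthogonality with $y\in\sX$ completes the vanishing. The main obstacle is the bookkeeping in the $n\geq 2$ case, where one must carefully shuffle $F$-, $\Sigma$-, and $\SSS$-shifts using both Serre duality and the $F$-adjunction to land a Hom of the form $\Hom_{\Db(\kk Q)}(\sX,\sY)$, at which point the t-structure does the rest.
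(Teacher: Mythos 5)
Your proof is correct. Note that the paper does not prove this statement at all: it is quoted verbatim from \cite[Lemma 3.4]{IJ}, so there is no internal proof to compare against. Your computation is the standard one and is exactly in the spirit of the arguments the paper does carry out (e.g.\ in the proof of Lemma~\ref{lem:smc-vanishing-conditions}): expand $\Hom_{\Cw}(x,\Sigma^{-i}y)=\bigoplus_{n}\Hom_{\Db(\kk Q)}(x,F^n\Sigma^{-i}y)$, identify the $n=1$ summand with $D\Hom_{\Db(\kk Q)}(y,\Sigma^{i-w}x)$ via Serre duality, and kill all $n\leq -1$ and $n\geq 2$ summands using $F^{-1}\sY\subseteq\sY$ (which rests on $\SSS^{\pm1}$ moving modules by at most one shift, i.e.\ the hereditary property), $x,y\in\Fw=\sX\cap F\sY$, $0\leq i\leq w$, and $\Hom(\sX,\sY)=0$. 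The only quibble is cosmetic: whether $\SSS M$ for $M$ non-projective sits in cohomological degree $1$ or $-1$ depends on the shift convention, but the substantive claim $\SSS(\mod{\kk Q})\subseteq\mod{\kk Q}\cup\Sigma\mod{\kk Q}$, hence $\SSS^{-1}\sY\subseteq\sY$, is what your argument uses and it is correct.
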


%============================================================================
% SECTION
\section{Functorially finite hearts} \label{sec:funct-finite}
%============================================================================

In this section we establish an unexpected characterisation of algebraic t-structures: our third main theorem, which we state and prove first, relates homological properties of hearts to approximation properties. 
The theorem extends a characterisation of Bondarko in terms of co-t-structures; see \cite[Theorem 5.3.1]{Bondarko19}.

In this section, $\sD$ will be a Hom-finite, Krull-Schmidt triangulated category. We start by giving two definitions following \cite{AST} and \cite{NSZ}.

\begin{definition}
Let $(\sX,\sY)$ be a t-structure in $\sD$. The \emph{projective coheart} of $(\sX,\sY)$ is $\sS = {}\orth\Sigma \sX \cap \sX$ and the \emph{injective coheart} of $(\sX,\sY)$ is $\sC = \sY\orth \cap \Sigma \sY$.
\end{definition}

As in \cite{NSZ}, we do not require the existence of a co-t-structure adjacent to $(\sX,\sY)$ in order to consider the projective or injective cohearts.
The projective and injective cohearts were also considered in \cite{AST}, where the objects of the projective and injective cohearts are called the `Ext-projectives of the aisle $\sX$' and the `Ext-injectives of the coaisle $\Sigma \sY$', respectively. This terminology inspires the next definition.

\begin{definition}
Let $(\sX,\sY)$ be a t-structure in $\sD$ with heart $\sH = \sX \cap \Sigma \sY$. We say that $(\sX,\sY)$ has \emph{enough Ext-projectives} if $\sS$ is contravariantly finite in $\sX$ and $\sS\orth \cap \sH = 0$.
Similarly, we say $(\sX,\sY)$ has \emph{enough Ext-injectives} if $\sC$ is covariantly finite in $\Sigma \sY$ and ${}\orth \sC \cap \sH = 0$.
\end{definition}

\begin{remark}
The projective coheart $\sS$ and injective coheart $\sC$ of a t-structure $(\sX,\sY)$ satisfy $\Hom(\sS, \Sigma^{>0} \sS) = 0$ and $\Hom(\sC, \Sigma^{>0} \sC) = 0$; that is, they are examples of \emph{presilting subcategories}.
In most examples we have in mind, the presilting subcategories have an additive generator and therefore are automatically functorially finite, so the hypotheses $\sS$ is contravariantly finite in $\sD$ and $\sC$ is covariantly finite in $\sD$ are quite mild.
\end{remark}

\begin{theorem} \label{thm:heart-finite}
Let $\sD$ be a Hom-finite, Krull-Schmidt, $\kk$-linear triangulated category. Let $(\sX,\sY)$ be a bounded t-structure in $\sD$ with heart $\sH = \sX \cap \Sigma \sY$, projective coheart $\sS = {}\orth\Sigma \sX \cap \sX$ and injective coheart $\sC = \sY\orth \cap \Sigma \sY$.
The following are equivalent:
\begin{enumerate}
\item $\sH$ is contravariantly finite (resp.\ covariantly finite) in $\sD$ and $(\sX,\sY)$ has enough Ext-injectives (resp.\ Ext-projectives).
\item $\sH$ has enough injectives (resp.\ projectives) and each injective object $e \in \sH$ admits an $\sH$-monomorphism $e \into H^0(c)$ for some $c \in \sC$ (resp.\ each projective object $p \in \sH$ admits an $\sH$-epimorphism $H^0(s) \onto p$ for some $s \in \sS$).
\item $(\sX,\sY)$ has a right (resp.\ left) adjacent co-t-structure.
\end{enumerate}
\end{theorem}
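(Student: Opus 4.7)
The plan is to prove the cycle $(3)\Rightarrow(1)\Rightarrow(2)\Rightarrow(3)$, handling only the injective version; the projective version is dual. The conceptual bridge is the following dictionary: since $\sY\orth\subseteq\sX$ holds for any t-structure, we have $\sC\subseteq\sH$, and the objects of $\sC$ are exactly those $c\in\sH$ satisfying $\Ext^{\geq 1}_\sH(-,c)=0$ (via boundedness of the t-structure). Ordinary injectives of $\sH$ (characterised by $\Ext^1_\sH(-,e)=0$) need not lie in $\sC$ in general, and condition $(2)$ is exactly the compatibility that bridges these two notions.

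For $(3)\Rightarrow(1)$, given a right adjacent co-t-structure $(\sY,\sV)$, I first identify its coheart $\Sigma\sY\cap\sV$ with $\sC$: one inclusion is immediate from $\sV\subseteq\sY\orth$, and the other follows because the co-t-structure triangle $y\to c\to v$ of any $c\in\sC$ has vanishing first map, so that $c$ splits off $v$. Covariant finiteness of $\sC$ in $\Sigma\sY$ is then automatic as $\sC$ is a coheart, and ${}\orth\sC\cap\sH=0$ follows by boundedness and d\'evissage. Contravariant finiteness of $\sH$ in $\sD$ is obtained by composing the t-structure and co-t-structure truncation functors applied to any $d\in\sD$ and verifying that the composite lands in $\sH=\sX\cap\Sigma\sY$. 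For $(1)\Rightarrow(2)$, for each $M\in\sH$ I take a left $\sC$-approximation $f\colon M\to c$ in $\Sigma\sY$, which lies in $\sH$ since $\sC\subseteq\sH$; the orthogonality ${}\orth\sC\cap\sH=0$ forces $f$ to be a monomorphism, because its kernel $K$ satisfies $\Hom_\sD(K,c')=0$ for any $c'\in\sC$: any $K\to c'$ extends to $M\to c'$ using the long exact sequence for the short exact sequence $0\to K\to M\to M/K\to 0$ together with $\Hom(\Sigma^{-1}\sH,\sC)=0$, then factors through $f$ by the approximation property, and finally vanishes upon restriction to $K$. This produces enough injectives in $\sH$ with the compatibility condition built in. For $(2)\Rightarrow(3)$, set $\sV:=\sY\orth$; then $\Sigma\sV\subseteq\sV$ and $\Hom(\sY,\sV)=0$ are automatic, so only $\sD=\sY*\sV$ remains. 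By boundedness it is enough to decompose each $h\in\sH$: hypothesis $(2)$ gives an embedding $h\hookrightarrow c\in\sC\subseteq\sV$, producing a short exact sequence $0\to h\to c\to h'\to 0$ in $\sH$; the rotated triangle $\Sigma^{-1}h'\to h\to c\to h'$ realises the required decomposition, with $\Sigma^{-1}h'\in\Sigma^{-1}\sH\subseteq\sY$. The general case follows by induction on $\sH$-length and the octahedral axiom.

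The main technical obstacles I anticipate are twofold: in step $(3)\Rightarrow(1)$, establishing contravariant finiteness of $\sH$ in $\sD$ requires careful interleaving of the t- and co-t-structure truncations to land in $\sH$ rather than in just one factor; and in step $(2)\Rightarrow(3)$, the octahedral induction must be performed in the correct order (e.g.\ inducting on the top $\sH$-degree of an object in the bounded t-structure) to ensure that the accumulated $\sV$-part remains in $\sV$ throughout.
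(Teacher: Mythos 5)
Your proposal founders on its opening ``dictionary'': the claim that $\sY\orth \subseteq \sX$ holds for any t-structure, and hence that $\sC \subseteq \sH$ with $\sC$ consisting exactly of the objects $c\in\sH$ with $\Ext^{\geq 1}_\sH(-,c)=0$, is false in general. It does hold for the standard t-structure on $\Db(A)$ (because the heart has a projective generator), but the theorem is about arbitrary bounded t-structures. Concretely, take $A=\kk Q$ for $Q\colon 1\to 2$ and the bounded t-structure associated with the silting object $T=P_2\oplus\Sigma P_1$, which is silting but not tilting. Its injective coheart is $\sC=\add{\nu T}$ (apply the Serre functor to the projective coheart $\add{T}$), and $\Hom(T,\Sigma\,\nu T)\simeq D\Hom(T,\Sigma^{-1}T)\neq 0$, so $\nu T$ does not lie in the heart $\sH\simeq \mod{\End(T)}$, nor even in $\sX$. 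Note that all three conditions of the theorem hold in this example, so the failure is not excluded by your hypotheses; it is also exactly why condition (2) is phrased with $H^0(c)$ rather than with $c$ itself.

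This error is load-bearing in two of your three implications. In $(1)\Rightarrow(2)$ your left $\sC$-approximation $f\colon M\to c$ is in general not a morphism of $\sH$, so its ``kernel'' and ``monomorphism'' have no meaning, and $c$ is not an injective object of $\sH$; the correct route (as in the paper) is to first get enough injectives from contravariant finiteness of $\sH$ alone (minimal right $\sH$-approximation of $\Sigma h$ plus the Wakamatsu lemma), and then, for an injective $e$, to take a left $\sC$-approximation $e\to c$ and use an octahedron with the truncation of $c$ to extract the $\sH$-monomorphism $e\into H^0(c)$. In $(2)\Rightarrow(3)$, hypothesis (2) only yields a mono $h\into H^0(c)$, not $h\into c$, so your short exact sequence $0\to h\to c\to h'\to 0$ in $\sH$ need not exist; one must splice the mono into $H^0(c)$ with the truncation triangle $H^0(c)\to c\to y_c$ (a $3\times 3$-diagram argument) before running the induction on the t-structure filtration. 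Your $(3)\Rightarrow(1)$ outline (identifying the coheart of the adjacent co-t-structure with $\sC$, and composing t- and co-t-truncations) is essentially sound and matches the paper, but as it stands the proposal does not prove the theorem: repair $(1)\Rightarrow(2)$ and $(2)\Rightarrow(3)$ without assuming $\sC\subseteq\sH$.
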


The characterisation $(2) \Longleftrightarrow (3)$ was observed in \cite[Theorem 5.3.1]{Bondarko19}; we provide details of the argument for $(2) \Longrightarrow (3)$ for the convenience of the reader.
We only prove the unbracketed statements; the bracketed statements are dual.

\begin{proof}
First we prove $(1) \Longrightarrow (2)$. Suppose $\sH$ is contravariantly finite in $\sD$. We must show that every object of $\sH$ admits an injective envelope. Let $h \in \sH$ and take a minimal right $\sH$-approximation of $\Sigma h$:
\[
h' \to \Sigma h \to \Sigma e \to \Sigma h'.
\]
By the triangulated Wakamatsu lemma $\Sigma e \in \sH\orth$ (see e.g.\ \cite[Lemma 2.1]{Jorgensen}; this is where the hypotheses that $\sD$ is Hom-finite and Krull-Schmidt are used). Rotating this triangle gives us the triangle $h \to e \to h' \to \Sigma h$, whence extension closure of $\sH$ means that $e \in \sH$. Thus, $e \in \sH \cap (\Sigma^{-1} \sH)\orth$, i.e.\ $\Ext^1_\sH(-,e) = 0$ and $e$ is injective. However, each short exact sequence $0 \to h_1 \to h_2 \to h_3 \to 0$ in $\sH$ corresponds to a distinguished triangle $h_1 \to h_2 \to h_3 \to \Sigma h_1$ and vice versa so that we have a short exact sequence $0 \to h \to e \to h' \to 0$, in which case $h \into e$ is an injective envelope. 

Now suppose $e \neq 0$ is an injective object of $\sH$. Since $(\sX,\sY)$ has enough Ext-injectives, we can take a (minimal) left $\sC$-approximation of $e$ and extend it to a distinguished triangle
\[
z \to e \to c \to \Sigma z.
\]
We claim that $z \in \sY$. Since $e \neq 0$ and ${}\orth \sC \cap \sH = 0$, we have that $c \neq 0$ and the Wakamatsu lemma tells us that $z \in {}\orth \sC$.
We also see that $z \in \Sigma \sY$ because $e \in \sX \cap \Sigma \sY$, $\Sigma^{-1} c \in \Sigma^{-1} \sC  \subseteq \sY \subseteq \Sigma \sY$ and $\Sigma \sY$ is extension closed.
Truncating $z$ with respect to $(\sX,\sY)$ gives a triangle,
\[
h_z \to z \to y_z \to \Sigma h_z, 
\]
with $h_z \in \sH$. Applying $\Hom(-,\sC)$ to this triangle and using $\Hom(\Sigma^{-1}\sY,\sC)\subseteq\Hom(\sY,\sC)=0$ reveals that $h_z \in {}\orth \sC \cap \sH = 0$. Hence $z \simeq y_z \in \sY$, as claimed.

Finally, consider the truncation triangle of $c$ with respect to $(\sX,\sY)$:
\[
h_c \to c \to y_c \to \Sigma h_c,
\]
noting that $h_c \in \sH$, that is $H^0(c) = h_c$. Since $e \in \sX$, the composition $e \to c \to y_c$ is zero, which means we obtain the following octahedral diagram.
\[
\xymatrix@!R=8px{
                                                    & e \ar@{=}[r] \ar[d]    & e \ar[d]                       &  \\
\Sigma^{-1} y_c \ar[r] \ar@{=}[d] & h_c \ar[r] \ar[d]         & c \ar[r] \ar[d]              & y_c \ar@{=}[d] \\
\Sigma^{-1} y_c \ar[r]                  & h \ar[r] \ar[d]             & \Sigma z \ar[r] \ar[d]  & y_c \\
                                                   & \Sigma e \ar@{=}[r]  & \Sigma e                    &
}
\]

From the left-hand vertical triangle we read off that $h \in \sX$ and from the lower horizontal triangle we read off that $h \in \Sigma \sY$, using $z\in \sY$. Hence, $h \in \sH$, meaning that the triangle $e \to h_c \to h \to \Sigma e$ corresponds to a short exact sequence $0 \to e \to h_c \to h \to 0$ in $\sH$. Hence, the injective object $e \in \sH$ admits an $\sH$-monomorphism $e \into H^0(c)$ for some $c \in \sC$. 

$(2) \Longrightarrow (3)$. 
Let $\sU = \sY$ and $\sV = \sY\orth$, we need to show that $(\sU,\sV)$ is a co-t-structure in $\sD$. 
Hom-vanishing $\Hom(\sU, \sV) = 0$ is immediate; closure under direct summands and closure under shifts are clear because $\sU = \sY$ is the co-aisle of a t-structure. 
Hence, we only need to exhibit for each $d \in \sD$ the approximation triangle $u \to d \to v \to \Sigma u$ with $u \in \sU$ and $v \in \sV$.
Since $(\sX,\sY)$ is bounded, for each $d \in \sD$ there exists $n \in \bZ$ such that $d \in \Sigma^n \sY = \Sigma^n \sU$. Hence, if $n \leq 0$, we have $d \in \sY$ and the trivial triangle $d \rightlabel{1} d \too 0 \too \Sigma d$ suffices. Thus we assume $n > 0$ and proceed by induction on $n$.

Suppose $n = 1$, i.e.\ $d \in \Sigma \sY$ and consider its $(\sX,\sY)$-truncation triangle, 
\[
h_d \to d \to y_d \to \Sigma h_d,
\] 
noting that $h_d \in \sX \cap \Sigma \sY = \sH$. 
Since $\sH$ has enough injectives, there exists an injective envelope $h_d \into e$, i.e.\ $e \in \sH$ injective. By hypothesis, there exists an $\sH$-monomorphism $e \into h_c$, where $h_c = H^0(c)$ for some $c \in \sC$, the injective coheart. Let $h'' \in \sH$ be the cokernel of the composition of these two morphisms:
$0 \to h_d \to h_c \to h'' \to 0$.

Since $\Hom(\Sigma^{-1} y_d, c) = 0$ due to $\Sigma^{-1} \sY \subseteq \sY$ and $\sC \subseteq \sY\orth$, the morphism $h_d \to h_c$ gives rise to a commutative diagram of triangles,
\[
\xymatrix@!R=8px{
\Sigma^{-1} y_d \ar[r] \ar[d] & h_d \ar[r] \ar[d] & d \ar[r] \ar[d] & y_d \ar[d]  \\
\Sigma^{-1} y_c \ar[r]          & h_c \ar[r]           & c \ar[r]          & y_c
}
\]
which, by \cite[Proposition 1.1.11]{BBD}, extends to the $3 \times 3$ diagram below.
\[
\xymatrix@!R=8px{
\Sigma^{-1} y \ar[r] \ar[d]    & \Sigma^{-1} h'' \ar[r] \ar[d]  & u \ar[r] \ar[d]   & y \ar[d] \\  
\Sigma^{-1} y_d \ar[r] \ar[d] & h_d \ar[r] \ar[d]                  & d \ar[r] \ar[d]   & y_d \ar[d]  \\
\Sigma^{-1} y_c \ar[r] \ar[d] & h_c \ar[r] \ar[d]                  & c \ar[r] \ar[d]    & y_c \ar[d] \\
y \ar[r]                                  & h'' \ar[r]                             & \Sigma u \ar[r] & \Sigma y
}
\]
From the right-hand vertical triangle we read off $y \in \sY$ and the top-most horizontal triangle gives $u \in \sU = \sY$, using $\Sigma^{-1} h'' \in \Sigma^{-1} \sH \subseteq \sY$. Since $c \in \sC = \sY\orth \cap \Sigma \sY = \sV \cap \Sigma \sU \subseteq \sV$, we see that $u \to d \to c \to \Sigma u$ is a $(\sU,\sV)$-approximation triangle of $d$.

Now suppose $d \in \Sigma^n \sY$ for some $n > 1$. By induction, there is a $(\sU,\sV)$-approximation triangle $u_1 \to \Sigma^{-1} d \to v_1 \to \Sigma u_1$ with $u_1 \in \sU$ and $v_1 \in \sV$. 
By the base step of the induction, there is a $(\sU,\sV)$-approximation triangle $u \to \Sigma u_1 \to c \to \Sigma u$ with $u \in \sU$ and $c \in \sC$.
Applying the octahedral axiom, we obtain the following commutative diagram,
\[
\xymatrix@!R=8px{
                             & v_1 \ar@{=}[r] \ar[d]      & v_1 \ar[d]      &  \\
u \ar[r] \ar@{=}[d] & \Sigma u_1 \ar[r] \ar[d]  & c \ar[r] \ar[d]  & \Sigma u \ar@{=}[d] \\
u \ar[r]                   & d \ar[r] \ar[d]                  & v \ar[r] \ar[d]  & \Sigma u \\
                             & \Sigma v_1 \ar@{=}[r]    & \Sigma v_1   &
}
\]
in which $v \in \sV$ because $c \in \sC \subseteq \sV$ and $\Sigma \sV \subseteq \sV$. Hence, $u \to d \to v \to \Sigma u$ provides the required $(\sU,\sV)$-approximation triangle of $d$. 
Hence, the t-structure $(\sX,\sY)$ has a right adjacent co-t-structure.

$(3) \Longrightarrow (1)$. 
Since there is a right adjacent co-t-structure, it follows that $\Sigma \sY$ is contravariantly finite in $\sD$. Thus, to obtain that $\sH$ is contravariantly finite in $\sD$ it is enough to show that $\sH$ is contravariantly finite in $\Sigma \sY$. This is straightforward: let $d \in \Sigma \sY$ and take the truncation triangle with respect to $(\sX,\sY)$, 
\[
x \to d \to y \to \Sigma x.
\]
Here, the morphism $x \to d$ is a right $\sX$-approximation, so in particular, any morphism $h \to d$ with $h \in \sH$ factors through $x \to d$. Since $d \in \Sigma \sY$, it follows that $x \in \Sigma \sY$ because $\Sigma^{-1}y \in \Sigma^{-1}\sY \subset \Sigma \sY$. Hence $x \in \sX \cap \Sigma \sY = \sH$, so, in particular, $x \to d$ is a right $\sH$-approximation of $d$. Hence $\sH$ is contravariantly finite in $\Sigma \sY$ and therefore also in $\sD$.

To see that $(\sX,\sY)$ has enough Ext-injectives, let $(\sU,\sV)$ be the co-t-structure right adjacent to the t-structure $(\sX,\sY)$, i.e.\ with $\sU = \sY$, and note that $\sC = \Sigma \sU \cap \sV$ is the coheart of this co-t-structure. We first claim that $\sC$ is covariantly finite in $\Sigma \sY = \Sigma \sU$.
Let $y \in \sY$ and consider a $(\sU,\sV)$-approximation triangle of $\Sigma y$,
$u \to \Sigma y \to c \to \Sigma u$.
Extension closure of $\sU$ and $\sV$ shows that $c \in \Sigma \sU \cap \sV = \sC$. Applying the functor $\Hom(-,\sC)$ to this triangle reveals that the morphism $\Sigma y \to c$ is a left $\sC$-approximation. Hence, $\sC$ is covariantly finite in $\Sigma \sY$.

Finally, we need to show that ${}\orth \sC \cap \sH = 0$. Suppose $0 \neq h \in {}\orth \sC \cap \sH$ and again consider a $(\sU,\sV)$-approximation triangle as above: $u \to h \to c \to \Sigma u$. The argument above showed that $c \in \sC$, whence the morphism $h \to c$ is zero, making $h$ a direct summand of $u \in \sU = \sY$. Hence, $h \in \sX \cap \sY = 0$, as required and $(\sX,\sY)$ has enough Ext-injectives.
\end{proof}

\begin{remark}
In the statement of Theorem~\ref{thm:heart-finite}, the full strength of the condition that the t-structure $(\sX,\sY)$ is bounded is not strictly required. 
The statements involving enough (Ext-)injectives require the t-structure to be bounded above, i.e.\ $\sD = \bigcup_{n\in \bZ} \Sigma^n \sY$; those involving enough (Ext-)projectives require the t-structure to be bounded below, i.e.\ $\sD = \bigcup_{n \in \bZ} \Sigma^n \sX$.
These conditions are only used in the proof of the implication $(2) \Longrightarrow (3)$.
\end{remark}

In the case that $\sD$ is a saturated triangulated category, the statement of Theorem~\ref{thm:heart-finite} becomes simpler and we can omit all mention of the existence of enough Ext-projectives or enough Ext-injectives and the projective and injective cohearts. We recall the definition of a saturated triangulated category from \cite{Bondal-Kapranov,Bondal-vdB}.

\begin{definition}
Let $\sD$ be a triangulated category and $F\colon \sD \to \Mod{\kk}$ a cohomological functor.
 
\begin{enumerate}
\item $F$ is a {\it functor of finite type} if for any $d \in \sD$, $F(\Sigma^i d) \neq 0$ only for finitely many $i$. 
\item If $F$ is contravariant, then it is called {\it representable} if there is a functor isomorphism between $F$ and $\Hom_\sD (-,x)$ for some $x \in \sD$. The representing object $x$ is unique, up to isomorphism. Dually for covariant $F$.
\end{enumerate} 
The triangulated category $\sD$ is said to be
\begin{enumerate}[resume]
\item {\it of finite type} if for every pair of objects $x, y \in \sD$, the space $\Ext^i_\sD (x,y)$ is finite dimensional and it vanishes for almost all $i$. 
\item {\it saturated} if it is of finite type, and if every cohomological functor of finite type is representable. 
\end{enumerate}
\end{definition}

\begin{example}
Examples of saturated triangulated categories include the bounded derived categories $\Db(A)$ of finite-dimensional $\kk$-algebras $A$ of finite global dimension \cite[Theorem 2.11]{Bondal-Kapranov}, the bounded derived categories $\Db(\coh{X})$ of coherent sheaves on a smooth projective variety $X$ \cite[Theorem 2.14]{Bondal-Kapranov} (see also \cite[Theorem A.1]{Bondal-vdB} for a more general statement), and more generally, Ext-finite triangulated categories with a strong generator \cite[Theorem 1.3]{Bondal-vdB}.
\end{example}

We now state the simplification of Theorem~\ref{thm:heart-finite} for the saturated case.

\begin{corollary}[Theorem~\ref{thm:functorially-finite}] \label{cor:saturated}
Let $\sD$ be a Hom-finite, Krull-Schmidt, saturated triangulated category. Let $(\sX,\sY)$ be a bounded t-structure in $\sD$ with heart $\sH$. The following are equivalent:
\begin{enumerate}
\item $\sH$ is contravariantly finite (resp.\ covariantly finite) in $\sD$.
\item $\sH$ has enough injectives (resp.\ projectives).
\item $(\sX,\sY)$ has a right (resp.\ left) adjacent co-t-structure, i.e.\ there is a co-t-structure $(\sY, \sY^\perp)$ (resp.\ $(\,^\perp \sX, \sX)$).   
\end{enumerate}
\end{corollary}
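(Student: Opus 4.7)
The plan is to deduce Corollary~\ref{cor:saturated} from Theorem~\ref{thm:heart-finite} by showing that, when $\sD$ is saturated, the additional clauses present in Theorem~\ref{thm:heart-finite}(1) and~(2) (respectively ``enough Ext-injectives'', and ``each injective $e\in\sH$ admits an $\sH$-monomorphism $e\into H^0(c)$ for some $c\in\sC$'') become automatic. The implications $(3)\Longrightarrow(1)$ and $(3)\Longrightarrow(2)$ are immediate from the corresponding halves of Theorem~\ref{thm:heart-finite}. For $(1)\Longrightarrow(2)$, inspection of the proof of the corresponding implication in Theorem~\ref{thm:heart-finite} shows that contravariant finiteness of $\sH$ alone already produces injective envelopes in $\sH$, since the enough Ext-injectives hypothesis enters only in the proof of the additional $\sH$-monomorphism clause.

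The main implication is $(2)\Longrightarrow(3)$. Assuming $\sH$ has enough injectives, I would verify the second clause of Theorem~\ref{thm:heart-finite}(2) so as to apply $(2)\Longrightarrow(3)$ of that theorem. For each injective $e\in\sH$ I consider the contravariant functor
\[
F_e \colon \sD^{\mathrm{op}}\longrightarrow \Mod{\kk}, \qquad F_e(d)=\Hom_\sH(H^0(d),e).
\]
Since $e$ is injective in $\sH$, $\Hom_\sH(-,e)$ is exact on $\sH$; composing with the cohomological functor $H^0\colon\sD\to\sH$ gives a cohomological $F_e$. Boundedness of the t-structure guarantees that each $d\in\sD$ has only finitely many non-zero cohomology objects $H^i(d)$, so $F_e$ is of finite type. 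Saturation of $\sD$ then yields a representing object $c_e\in\sD$ with $F_e\simeq\Hom_\sD(-,c_e)$.

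It remains to check that $c_e\in\sC$ and, more strongly, that $H^0(c_e)\simeq e$. The vanishing $H^0|_{\sY}=0$ gives $\Hom_\sD(\sY,c_e)=0$, i.e.\ $c_e\in\sY\orth$; similarly $H^0|_{\Sigma\sX}=0$ gives $c_e\in(\Sigma\sX)\orth=\Sigma\sY$, so $c_e\in\sY\orth\cap\Sigma\sY=\sC$. For the identification $H^0(c_e)\simeq e$, I would use the $(\sX,\sY)$-truncation triangle $H^0(c_e)\to c_e\to y_c\to\Sigma H^0(c_e)$ with $y_c\in\sY$, apply $\Hom_\sD(h,-)$ for $h\in\sH$, and invoke the vanishings $\Hom_\sD(h,\sY)=0$ and $\Hom_\sD(h,\Sigma^{-1}\sY)=0$ (the latter because $\Sigma^{-1}\sY\subseteq\sY$) to produce natural isomorphisms
\[
\Hom_\sH(h,H^0(c_e))\simeq\Hom_\sD(h,c_e)\simeq\Hom_\sH(h,e).
\]
Yoneda in $\sH$ then yields $H^0(c_e)\simeq e$, and Theorem~\ref{thm:heart-finite}(2)$\Longrightarrow$(3) applies. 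The dual (bracketed) statement is handled symmetrically by the covariant functor $d\mapsto\Hom_\sH(p,H^0(d))$ for each projective $p\in\sH$, with the projective coheart $\sS$ in place of $\sC$.

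The hardest step will be the construction and correct identification of $c_e$: checking that $F_e$ really is cohomological of finite type (which requires both the injectivity of $e$ and the boundedness of the t-structure), and extracting from representability that $c_e$ simultaneously lies in the injective coheart $\sC$ and has $H^0(c_e)\simeq e$. Once these are in place, saturation is exactly what converts the purely abelian datum of an injective object in $\sH$ into the triangulated datum of an object of $\sC$, bridging the gap between the two sides of Theorem~\ref{thm:heart-finite} that is missing in the unsaturated setting.
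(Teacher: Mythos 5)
Your proposal is correct and follows essentially the same route as the paper: both reduce $(2)\Rightarrow(3)$ to Theorem~\ref{thm:heart-finite} by representing the functor $\Hom_\sH(H^0(-),e)$ via saturatedness, checking the representing object lies in the injective coheart $\sC$ from the vanishing of $H^0$ on $\Sigma\sX$ and $\sY$, and then deducing $H^0(c_e)\simeq e$ by Yoneda. The paper packages the identification $H^0(c_e)\simeq e$ via fully faithfulness of $H^0|_\sC$ (citing the dual of a lemma of Assem--Souto Salorio--Trepode) followed by an explicit chase, whereas you derive the natural isomorphism $\Hom_\sH(-,H^0(c_e))\simeq\Hom_\sH(-,e)$ directly from the $(\sX,\sY)$-truncation triangle of $c_e$; this is a presentational rather than substantive difference, and your observation that $(1)\Rightarrow(2)$ needs only contravariant finiteness is exactly what the paper relies on when it says that implication is contained in the proof of Theorem~\ref{thm:heart-finite}.
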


\begin{proof}
Again we show only the unbracketed statements; the bracketed statements are dual.
The implications $(1) \Longrightarrow (2)$ and $(3) \Longrightarrow (1)$ are contained in the proof of Theorem~\ref{thm:heart-finite}. 
The implication $(2) \Longrightarrow (3)$ is due to \cite[Theorem 5.3.1(IV)]{Bondarko19}; we include details for the convenience of the reader.

By the dual of \cite[Lemma 1.3]{AST} (see also \cite[Lemma 2]{NSZ}), the functor $H^0(-)|_\sC \colon \sC \to \inj{\sH}$ is fully faithful, where $\sC = \sY\orth \cap \Sigma \sY$ is the injective coheart of $(\sX,\sY)$. 
Moreover, by considering suitable truncation triangles, for each $c \in \sC$ there is a natural isomorphism $\Hom_\sD(-,c) \simeq \Hom_\sH(H^0(-),H^0(c))$.
We claim that $H^0(-)|_\sC$ is an equivalence of categories, that is
for each $e \in \inj{\sH}$, we need to show that there is an isomorphism $H^0(c) \simeq e$ for some $c \in \sC$.

Consider the functor $\Hom_\sH(H^0(-),e) \colon \sD \to \mod{\kk}$. By saturatedness, $\Hom_\sH(H^0(-),e)$ is a cohomological functor of finite type, and hence is representable, i.e.\ there exists an object $c \in \sD$ such that $\Hom_\sH(H^0(-),e) \simeq \Hom_\sD(-,c)$. We note that $c \in \sC$ since $H^0(\Sigma x) = 0$ for $x \in \sX$ and $H^0(y) = 0$ for $y \in \sY$, which together imply that $\Hom_\sD(\Sigma \sX,c) = 0$, putting $c \in (\Sigma \sX)\orth = \Sigma \sY$, and $\Hom_\sD(\sY,c) = 0$, putting $c \in \sY\orth$.

Combining the natural isomorphisms in the two preceding paragraphs and restricting to $\sH$ gives rise to a natural isomorphism
\[
\theta \colon \Hom_\sH(-, H^0(c)) \to \Hom_\sH(-,e).
\]
Let $\alpha = \theta_{H^0(c)}(1_{H^0(c)})$ and $\beta = \theta_e^{-1}(1_e)$. Applying the natural isomorphism to the morphism $\alpha \colon H^0(c) \to e$ gives a commutative diagram:
\[
\xymatrix{
\Hom_\sH(e,H^0(c)) \ar[rr]^-{\theta_e} \ar[d]_-{\Hom_\sH(\alpha, H^0(c))} & & \Hom_\sH(e,e) \ar[d]^-{\Hom_\sH(\alpha,e)} \\
\Hom_\sH(H^0(c),H^0(c)) \ar[rr]_-{\theta_{H^0(c)}}                                     & & \Hom_\sH(H^0(c),e)
}
\]
Chasing $\beta \colon e \to H^0(c)$ through this diagram shows that $\beta \alpha = 1_{H^0(c)}$. Similarly, chasing $\alpha$ through the corresponding diagram constructed using $\beta$ shows $\alpha \beta = 1_e$. 
Hence $e \simeq H^0(c)$ and $H^0(-)|_\sC \colon \sC \to \inj{\sH}$ is an equivalence of categories.
Note, in particular, that $\beta \colon e \to H^0(c)$ provides the required $\sH$-monomorphism so that we can now apply the argument $(2) \implies (3)$ in the proof of Theorem~\ref{thm:heart-finite}.
\end{proof}

\begin{remark}
The argument in the proof of Corollary~\ref{cor:saturated} also shows that the condition that the t-structure $(\sX,\sY)$ has enough Ext-injectives (resp., Ext-projectives) is implicit in the saturated case since condition $(3)$ remains unchanged in both statements.
\end{remark}

Using Corollary~\ref{cor:saturated} we are able to establish the existence of Auslander--Reiten sequences in hearts of bounded t-structures inside saturated triangulated categories.

\begin{corollary} \label{cor:AR}
Suppose $\sD$ is a Hom-finite, Krull-Schmidt, saturated triangulated category. Let $\sH$ be the heart of a bounded t-structure in $\sD$.
\begin{enumerate}[label=(\arabic*)]
\item If $\sH$ has enough injectives then any indecomposable object of $\sH$ which is not Ext-projective is the third object of an Auslander--Reiten sequence.
\item If $\sH$ has enough projectives then any indecomposable object of $\sH$ which is not Ext-injective is the first object of an Auslander--Reiten sequence.
\end{enumerate}
\end{corollary}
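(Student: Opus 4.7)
The plan is to prove part (1); part (2) follows by duality. The overall strategy is to compose the central map of the Auslander--Reiten triangle of $\sD$ ending at $h$ with a right $\sH$-approximation to produce a right almost split morphism in $\sH$, and then to extract the desired AR sequence from it. Two preliminary facts supply the framework. First, since $\sD$ is Hom-finite, Krull-Schmidt, saturated, and $\kk$-linear, it admits a Serre functor $\SSS$ by Bondal-Kapranov, and hence has AR triangles by Reiten-Van den Bergh. Second, Corollary~\ref{cor:saturated} applied to the hypothesis that $\sH$ has enough injectives gives that $\sH$ is contravariantly finite in $\sD$.

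For an indecomposable $h \in \sH$ which is not Ext-projective, I would fix the AR triangle in $\sD$,
\[
\Sigma^{-1}\SSS h \longrightarrow e \longrightarrow h \longrightarrow \SSS h,
\]
and a minimal right $\sH$-approximation $\alpha \colon e'' \to e$. Set $\pi \coloneqq (e \to h) \circ \alpha \colon e'' \to h$. Then I would verify that $\pi$ is right almost split in $\sH$. Since $\sH \hookrightarrow \sD$ is fully faithful, split epimorphisms between objects of $\sH$ coincide in $\sH$ and in $\sD$, so $\pi$ is not a split epi (else $e \to h$ would be one in $\sD$, contradicting the AR property). Moreover, any $g \colon x \to h$ in $\sH$ that is not a split epi factors via the AR property as $x \to e \to h$, and the approximation property factors $x \to e$ through $\alpha$, whence $g$ factors through $\pi$.

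To extract an AR sequence, I would next check that $\pi$ is an epimorphism in $\sH$: since $h$ is not Ext-projective, there is a non-split short exact sequence $0 \to h_0 \to e_0 \to h \to 0$ in $\sH$, giving a non-split epi $e_0 \to h$ which factors through $\pi$ by the right almost split property, forcing $\pi$ itself to be epi. Replacing $\pi$ by its minimal version $\pi_0 \colon e_0'' \to h$ (by removing summands of $e''$ mapping to zero under $\pi$) and taking the kernel in $\sH$ would then yield the desired AR sequence $0 \to \tau h \to e_0'' \to h \to 0$. The main technical subtlety will be checking that this minimal version genuinely produces an AR sequence with indecomposable kernel rather than merely a right almost split epi; this is a standard consequence of $\sH$ being Hom-finite, Krull-Schmidt, and $\kk$-linear, via the local endomorphism ring argument applied to the minimal right almost split morphism $\pi_0$.
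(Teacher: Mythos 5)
Your proof is correct, and the front half matches the paper exactly: both arguments obtain a Serre functor from saturatedness (hence AR triangles via Reiten--Van den Bergh) and contravariant finiteness of $\sH$ from Corollary~\ref{cor:saturated}. The divergence is in how one descends from the AR triangle in $\sD$ to an AR sequence in $\sH$. The paper notes that the \emph{first} term $x=\Sigma^{-1}\SSS h$ of the AR triangle admits a minimal right $\sH$-approximation and then invokes J\o rgensen's Theorem~3.1 (``Auslander--Reiten triangles in subcategories'') as a black box to produce the sequence. You instead approximate the \emph{middle} term $e$, compose with $e\to h$, and verify by hand that the result $\pi$ is right almost split in $\sH$, that non-Ext-projectivity of $h$ forces $\pi$ to be an epimorphism, and that the right minimal version $\pi_0$ has indecomposable kernel. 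This is in effect a self-contained reproof of the relevant case of J\o rgensen's theorem, which has the virtue of making visible exactly where the Ext-projectivity hypothesis enters (it is what supplies a non-split epimorphism onto $h$ that must factor through $\pi$); the paper's citation is shorter but hides this. Your two ``standard'' steps do hold in the stated generality and deserve a word if written up: passing to the right minimal version $\pi_0$ preserves both right almost splitness and surjectivity, since the discarded part is a summand of $e''$ on which $\pi$ vanishes; and indecomposability of $\ker\pi_0$ follows from the classical pushout argument (push the extension class along a nonzero projection $\ker\pi_0\onto a_1$; if that pushout is non-split, right almost splitness and right minimality of $\pi_0$ make the pushout map a split mono which is also epi, hence iso, so the kernel of the projection is $0$), all of which only needs $\sH$ Hom-finite, Krull-Schmidt and $\kk$-linear.
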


\begin{proof}
We only prove the first statement; the second is dual.
Suppose $h \in \ind{\sH}$ is not Ext-projective, i.e.\ $\Ext^1_{\sH}(h,-) = \Hom_{\sD}(h,\Sigma -)|_\sH \neq 0$. Since $\sD$ is saturated it has a Serre functor; see, e.g. \cite{Kawamata}. Therefore, by \cite[Theorem I.2.4]{RvdB}, $\sD$ has Auslander--Reiten triangles. In particular, there is an Auslander--Reiten triangle $x  \to y \to h \to \Sigma x$ in $\sD$. Since $\sH$ has enough injectives, by Corollary~\ref{cor:saturated}, $\sH$ is contravariantly finite in $\sD$. Hence, $x$ admits a minimal right $\sH$-approximation. Thus we can now apply \cite[Theorem 3.1]{Jorgensen} to conclude that there is an Auslander--Reiten sequence $0 \to h' \to h'' \to h \to 0$ in $\sH$.
\end{proof}

Finally, we remark that Theorem~\ref{thm:heart-finite} allows us to recognise module categories of finite-dimensional algebras via the approximation theory of the heart inside an ambient triangulated category. 

\begin{corollary}
Let $\sD$ be a Hom-finite saturated $\kk$-linear triangulated category.
Suppose $(\sX,\sY)$ is an algebraic t-structure in $\sD$ with heart $\sH$. The heart $\sH$ is covariantly finite in $\sD$ if and only if $\sH \simeq \mod A$ for a finite-dimensional $\kk$-algebra $A$.
\end{corollary}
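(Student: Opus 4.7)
The plan is to combine Corollary~\ref{cor:saturated} with a standard Morita-type argument. The easy direction is ($\Leftarrow$): if $\sH \simeq \mod A$ with $A$ finite-dimensional, then $\sH$ is a length category with finitely many simple objects (the simple $A$-modules), and $A$ is a projective generator, so $\sH$ has enough projectives. Then Corollary~\ref{cor:saturated} immediately yields that $\sH$ is covariantly finite in $\sD$.

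For the forward direction, assume $\sH$ is covariantly finite in $\sD$. By Corollary~\ref{cor:saturated}, $\sH$ has enough projectives. Since the t-structure is algebraic, $\sH$ is a length category with finitely many isomorphism classes of simple objects $s_1, \dots, s_n$. I would first show that each $s_i$ admits a projective cover $p_i \onto s_i$: given enough projectives in an abelian length category, projective covers of simples exist by a standard Fitting-type argument (take any projective surjection and shave off summands that do not map onto a complement of the maximal subobject). Set $P \coloneqq p_1 \oplus \cdots \oplus p_n$ and $A \coloneqq \End_\sH(P)^{\mathsf{op}}$. Since $\sD$, and hence $\sH$, is Hom-finite over $\kk$, the algebra $A$ is finite-dimensional.

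The remaining step is to show that $F \coloneqq \Hom_\sH(P,-) \colon \sH \to \mod A$ is an equivalence. The functor is well-defined into $\mod A$ (rather than $\Mod A$) because $\Hom_\sH(P,h)$ is finite-dimensional over $\kk$ for every $h \in \sH$. The functor $F$ is exact because $P$ is projective, and $F(s_i)$ is (up to isomorphism) the simple top of the indecomposable projective $A$-module $F(p_i) = \Hom_\sH(P,p_i)$, so $F$ induces a bijection between the simples of $\sH$ and those of $\mod A$. Both categories are length categories, so dévissage gives that $F$ is faithful on objects of $\sH$ and essentially surjective onto objects of finite length in $\mod A$ (which is all of $\mod A$, since $A$ is finite-dimensional). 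Fullness follows because for $h, h' \in \sH$ and a projective presentation $P_1 \to P_0 \onto h$ with $P_0,P_1 \in \add{P}$, the five lemma together with fullness on $\add{P}$ (which is automatic from $F(p_i) = e_i A$) upgrades $F$ to a fully faithful functor; the same argument gives essential surjectivity.

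The only mildly subtle point is the existence of projective covers of simples and the verification that $F$ sends simples to simples; the rest is a routine projective-presentation argument. The main conceptual move is that Corollary~\ref{cor:saturated} converts the external approximation condition (covariant finiteness in the ambient $\sD$) into the internal homological condition (enough projectives) needed to construct the Morita equivalence.
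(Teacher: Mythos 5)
Your proof is correct and takes essentially the same route as the paper: both directions hinge on Corollary~\ref{cor:saturated}, which translates covariant finiteness of $\sH$ in $\sD$ into the intrinsic condition that $\sH$ has enough projectives. The only difference is that where you carry out the projectivization argument by hand (projective covers of the finitely many simples, the functor $\Hom_\sH(P,-)$ into $\mod{A}$, fullness and essential surjectivity via projective presentations), the paper simply cites the classical fact from Bass (p.~55) that such a Hom-finite length category with finitely many simples has enough projectives if and only if it is equivalent to $\mod{A}$ for a finite-dimensional $\kk$-algebra $A$.
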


\begin{proof}
By \cite[p.\ 55]{Bass}, $\sH$ has enough projectives if and only if $\sH \simeq \mod A$ for some finite-dimensional $\kk$-algebra $A$. The result now follows from Corollary~\ref{cor:saturated}.
\end{proof}

\begin{corollary} \label{cor:heart-finite}
Let $A$ be a finite-dimensional $\kk$-algebra. If $(\sX,\sY)$ is an algebraic t-structure in $\Db(A)$ with heart $\sH$, then $\sH$ is functorially finite in $\Db(A)$. 
\end{corollary}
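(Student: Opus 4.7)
The plan is to apply Theorem~\ref{thm:heart-finite} by verifying condition (2) on both the projective and injective sides. Since $(\sX,\sY)$ is algebraic, its simple objects $S_1,\ldots,S_n$ form a simple-minded collection in $\Db(A)$. By the K\"onig--Yang correspondence \cite{KY}, this simple-minded collection corresponds to a silting object $T \in \Kb(\proj{A}) \subseteq \Db(A)$ producing an equivalence $\sH \simeq \mod{B}$, where $B = \mathrm{End}_{\Db(A)}(T)^{op}$, that sends $H^0(T) \in \sH$ to the regular $B$-module; in particular, $\sH$ has enough projectives and enough injectives. The silting property together with the shape of the corresponding t-structure place $T$ in the projective coheart $\sS = {}\orth\Sigma\sX \cap \sX$: $T \in \sX$ follows from $\Hom(T,\Sigma^{>0}T) = 0$, and $\Hom(T,\Sigma\sX) = 0$ gives $T \in {}\orth\Sigma\sX$. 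Applying the Nakayama functor $\nu \colon \Kb(\proj{A}) \xrightarrow{\simeq} \Kb(\inj{A})$ yields a dual witness $\nu T \in \Kb(\inj{A}) \subseteq \Db(A)$ in the injective coheart $\sC = \sY\orth \cap \Sigma\sY$, with $H^0(\nu T)$ the injective cogenerator of $\sH$.

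Next I would verify the extra conditions of Theorem~\ref{thm:heart-finite}(2). Since $H^0(T)$ is a projective generator of $\sH$, every projective $p \in \sH$ is a direct summand of $H^0(T)^{\oplus k} = H^0(T^{\oplus k})$ for some $k$, yielding a split $\sH$-epimorphism $H^0(T^{\oplus k}) \onto p$ with $T^{\oplus k} \in \sS$. Dually, every injective $e \in \sH$ admits an $\sH$-monomorphism $e \into H^0((\nu T)^{\oplus k})$ for some $k$, with $(\nu T)^{\oplus k} \in \sC$. Both directions of condition (2) of Theorem~\ref{thm:heart-finite} are thus satisfied, so $\sH$ is both contravariantly finite and covariantly finite in $\Db(A)$---that is, functorially finite.

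The principal obstacle is the precise alignment of conventions in the K\"onig--Yang correspondence: ensuring that $T$ (rather than some shift) lies in $\sS$, that $H^0(T)$ is genuinely a projective generator of $\sH$ under the equivalence $\sH \simeq \mod{B}$, and that the Nakayama dual $\nu T$ plays the analogous role on the injective side. These are routine verifications dictated by the construction of the t-structure associated with a silting object in \cite{KY}, but they do require careful bookkeeping.
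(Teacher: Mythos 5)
Your proposal is correct, and the key input (the K\"onig--Yang correspondence between simple-minded collections, silting objects, and algebraic t-structures) is the same as in the paper's argument; the difference is which equivalent condition of Theorem~\ref{thm:heart-finite} you verify. You verify condition~(2) directly: you realise $\sH \simeq \mod B$ via the silting object $T$, identify $H^0(T)$ as a projective generator and $H^0(\nu T)$ as an injective cogenerator, and deduce the required $\sH$-epimorphisms and $\sH$-monomorphisms to and from objects in the cohearts. The paper instead verifies condition~(3): it notes, following \cite{KY} and \cite[Proposition 2.20]{AI}, that the projective coheart $\sS$ is an additively generated silting subcategory of $\Kb(\proj{A})$, and then invokes \cite[Proposition 3.2]{IYa} to obtain the left adjacent co-t-structure $(\cosusp \Sigma^{-1}\sS, (\Sigma^{<0}\sS)^{\perp})$ directly, handling the other side dually. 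The paper's route is slightly more economical because the co-t-structure is already given by the cited result, whereas your route requires tracking the K\"onig--Yang conventions carefully to confirm that $T$ (and not a shift) lands in $\sS$ and that $H^0$ identifies the (co)hearts with $\proj{B}$ and $\inj{B}$ --- verifications you correctly flag as routine but worth doing carefully. One small remark: since the corollary is stated for an arbitrary field $\kk$, the simple objects of $\sH$ form a simple-minded collection in the sense modified to allow $\Hom(S,S)$ to be a division ring, as the paper itself notes; this affects the phrasing but not the substance of either argument.
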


\begin{proof}
Let $A$ be a finite-dimensional algebra and suppose $(\sX,\sY)$ is an algebraic t-structure in $\Db(A)$. We show that $\sH$ is covariantly finite in $\sD$; dually one can show $\sH$ is contravariantly finite. By the K\"onig-Yang correspondences \cite{KY} and \cite[Proposition 2.20]{AI}, the projective coheart $\sS = {}\orth \Sigma \sX \cap \sX$ is a silting subcategory of $\Kb(\proj{A})$ such that $\sS = \add{s}$ for some object $s \in \Kb(\proj{A})$. In particular, $\sS$ is a functorially finite subcategory of $\Db(A)$.
Furthermore, $(\sX,\sY) = ((\Sigma^{<0} \sS)\orth, (\Sigma^{\geq 0} \sS)\orth)$ by \cite[Theorem 6.1]{KY}.
Now, applying \cite[Proposition 3.2]{IYa}, we observe that $(\cosusp{\Sigma^{-1} \sS}, (\Sigma^{<0} \sS)\orth)$ is a co-t-structure in $\Db(A)$ which is left adjacent to the t-structure $(\sX,\sY)$. Hence, by Theorem~\ref{thm:heart-finite}, $\sH$ is covariantly finite in $\Db(A)$.
\end{proof}

The following example, however, shows that there can be algebraic hearts inside a Hom-finite Krull-Schmidt triangulated category which are not necessarily functorially finite.

\begin{example} \label{ex:not-funct-finite}
Let $\sH$ be a standard stable homogeneous tube. This is a length category in which every object is uniserial. Moreover, $\sH$ contains a single simple object, but no injective or projective objects except the zero object. It has infinitely many indecomposable objects.
The category $\sD = \Db(\sH)$ is a $1$-Calabi-Yau triangulated category in which the only torsion pairs are trivial or shifts of the standard t-structure, which is bounded, see \cite[Theorem 9.1]{CSP16}. Since $\sH$ is length with one isoclass of simple objects, it is algebraic; but it doesn't have enough projectives or injectives and is therefore neither contravariantly finite nor covariantly finite in $\sD$. 
\end{example}

The next example gives a typical application of the theorem where the intrinsic property (presence or absence of enough injective and projective objects) is used to make a statement about finiteness.

\begin{example}
Let $\sH = \mod{\kk\tA{1}}$ be the category of finite-dimensional representations of the Kronecker quiver. In the derived category $\sD = \Db(\sH)$, this heart is obviously algebraic. As is well known, $\sD$ is equivalent to the bounded derived category of coherent sheaves on the projective line $\bP^1$ over $\kk$, giving rise to another heart $\sH' = \coh{\bP^1}$ in $\sD$. The abelian category $\sH'$ has neither injective nor projective objects apart from 0. Hence by Theorem~\ref{thm:heart-finite}, the heart $\sH'$ is neither covariantly nor contravariantly finite in $\sD$. 
\end{example}

\begin{remark}
It would be interesting to investigate when Corollary~\ref{cor:saturated} holds without the saturatedness assumption on the triangulated category.

As an example, consider $\sD = \Db(\Qcoh{\bP^1})$, the bounded derived category of quasi-coherent sheaves on $\bP^1$. The standard heart $\sH = \Qcoh{\bP^1}$ is a hereditary abelian category with enough injective objects but no nonzero projective objects.
The hereditary property, together with the fact that $\sH$ is the heart of a split t-structure in $\sD$, 
means the injective coheart $\sC = \sY\orth \cap \Sigma \sY = (\Sigma^{-1} \sH)\orth \cap \sH = \inj{\sH}$.
Hence condition $(2)$ in Theorem~\ref{thm:heart-finite} simplifies to the condition that $\sH$ has enough injectives. By Theorem~\ref{thm:heart-finite}, we can conclude that $\sH$ is contravariantly finite in $\sD$.
We can also see this explicitly: every object in $\sD$ splits into a finite direct sum of its cohomology sheaves. Let $d$ be an object of $\sD$ which we can assume to be $d = \Sigma^i A$ with $A \in \sH$ and $i \in \bZ$. Nonzero right $\sH$-approximations $H \to \Sigma^i A$ can only exist if $i=0$ or $i=1$. If $i=0$, they are trivial (take $H=A$). If $i=1$, let $I$ be an injective hull of $A$, leading to a short exact sequence $0 \to A \to I \to H \to 0$, i.e.\ $H$ is the first co-szyzgy of $A$. 
The resulting morphism $H \to \Sigma A$ is a right $\sH$-approximation. 

We observe that the lack of projective objects means that $\sH$ is not an enveloping subcategory of $\sD$.
As above, the hereditary property means that it is sufficient to consider nontrivial minimal left $\sH$-approximations only for objects of $\Sigma^{-1} \sH$. Let $A \in \sH$, take a minimal left $\sH$-approximation of $\Sigma^{-1} A$ and extend it to a distinguished triangle $\Sigma^{-1} H \to \Sigma^{-1} P \to \Sigma^{-1} A \to H$. 
Since $A, H \in \sH$, we have $P \in \sH$.
Because the approximation is minimal, the Wakamatsu lemma tells us that $\Sigma^{-1} P \in {}\orth \sH$. 
This means that $P \in \sH \cap {}\orth (\Sigma \sH) = \proj{\sH} = 0$. Therefore, no such nontrivial minimal left $\sH$-approximation exists and, hence, $\sH$ is not an enveloping subcategory of $\sD$.

We also expect that $\sH$ is not covariantly finite in $\sD$. The above argument fails to apply because $\sD$ is not saturated. This expectation is consistent with Theorem~\ref{thm:heart-finite}: the projective coheart of the standard t-structure is $\sS = {}\orth (\Sigma \sX) \cap \sX = {}\orth (\Sigma \sH) \cap \sH = \proj{\sH} = 0$ and hence the standard heart $\sH = \Qcoh{\bP^1}$ also does not have enough Ext-projectives.
\end{remark}

%============================================================================
% SECTION
\section{Orthogonal collections} \label{sec:simple-minded}
%============================================================================

In this section we recall the various notions of orthogonal collections and then establish some useful characterisations of them. The main references for the definitions in this section are \cite{CS12,CS15,CS17,CSP20,Dugas,Koenig-Liu}.
From now on, we will assume that the field $\kk$ is algebraically closed.

\begin{definitions}
A collection of objects $\sS$ in $\sD$ is called a \emph{$1$-orthogonal collection} (or simply \emph{orthogonal collection}) if $\dim \Hom_\sD(x,y) = \delta_{xy}$ for every $x,y \in \sS$. Let $w \geq 1$ be an integer. An orthogonal collection $\sS$ is said to be
\begin{enumerate}[label=(\roman*)]
\item (if $w > 1$) \emph{$w$-orthogonal} if $\Hom_\sD(\Sigma^k x,y) = 0$ for $1 \leq k \leq w-1$ and $x,y \in \sS$;
\item \emph{$\infty$-orthogonal} if $\Hom_\sD(\Sigma^k x,y) = 0$ for $k \geq 1$ and $x,y \in \sS$;
\item a \emph{$w$-simple-minded system} if it is $w$-orthogonal and $\sD = \extn{\sS} * \Sigma^{-1} \extn{\sS} * \cdots * \Sigma^{1-w} \extn{\sS}$;
\item a \emph{simple-minded collection} if it is $\infty$-orthogonal and $\sD = \thick{\sD}{\sS}$;
\item a \emph{left $w$-Riedtmann configuration} if it is $w$-orthogonal and $\bigcap_{k=0}^{w-1} (\Sigma^k \sS)\orth = 0$;
\item a \emph{right $w$-Riedtmann configuration} if it is $w$-orthogonal and $\bigcap_{k=0}^{w-1} {}\orth (\Sigma^{-k} \sS) = 0$; 
\item a \emph{$w$-Riedtmann configuration} if it is both a left $w$-Riedtmann configuration and a right $w$-Riedtmann configuration; and,
\item an \emph{$\infty$-Riedtmann configuration} if it is $\infty$-orthogonal and ${}\orth (\Sigma^{< 0} \sS) \cap (\Sigma^{\geq 0} \sS)\orth = 0$. 
\end{enumerate}
\end{definitions}

Our methods work in the case that $\kk$ is not algebraically closed provided one modifies the definition of orthogonal collection to require that $\Hom_\sD(x,x)$ is a division ring for each $x$ in the orthogonal collection. For simplicity of exposition, we choose to work over algebraically closed fields, so that when considering $\Db(\kk Q)$ one can work with quiver representations instead of the more technical representations of species. 

In \cite{Po} an orthogonal collection is called a \emph{system of orthogonal bricks}, in \cite{Dugas} a \emph{set of (pairwise) orthogonal bricks} and in \cite{Asai} a \emph{semibrick}. In \cite{IJ, Jin1,Jin2}  $w$-Riedtmann configurations are called \emph{$(-w)$-Calabi-Yau configurations} in light of \cite[Theorem 6.2]{Jin1} which asserts that if $\sS$ is a $w$-Riedtmann configuration then $\SSS \Sigma^w \sS = \sS$.

\begin{remark}
We have chosen to call collections with vanishing morphisms between distinct objects `$1$-orthogonal collections' rather than `$0$-orthogonal collections' because in $(-1)$-Calabi-Yau categories $\Hom(x,y) \simeq D\Hom(\Sigma y, x)$, which means that $1$-orthogonal collections with vanishing morphisms are the appropriate notion for $(-1)$-CY categories.
As such, one should think of a $w$-orthogonal collection (resp.\ $w$-simple-minded systems, resp.\ $w$-Riedtmann configuration) as being adapted for $(-w)$-CY categories.
\end{remark}

Let $\sX \subseteq \sD$ be a collection of objects in $\sD$. We set $(\sX)_1 = \sX$ and $(\sX)_n = \sX * (\sX)_{n-1}$. We now recall some basic properties of orthogonal collections.

\begin{lemma} \label{lem:basic-properties}
Let $\sS$ be an orthogonal collection in $\sD$. Then the following assertions hold:
\begin{enumerate}[label=(\arabic*)]
\item {\rm (\cite[Lemma 2.7]{Dugas})} $(\sS)_n$ is closed under direct summands for each $n \geq 1$.
\item {\rm (\cite[Lemma 2.3]{Dugas})} $\extn{\sS} = \bigcup_{n \geq 1} (\sS)_n$.
\item {\rm (\cite[Theorem 2.11]{CSP20} \& \cite[Theorem 3.3]{Dugas})} If $\sS \subseteq \sT$ for an orthogonal collection $\sT$ in $\sD$ then $\extn{\sS}$ is functorially finite in $\extn{\sT}$.
\end{enumerate}
\end{lemma}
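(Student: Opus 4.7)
Parts (1) and (2) are formal foundations for (3), which is the substantial assertion, so I would prove them in order.

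For (1), I would induct on $n$. The base case $n=1$ interprets $\sS$ as its additive closure $\add{\sS}$, which is closed under summands because $\sD$ is Krull--Schmidt and each $x \in \sS$ is a brick ($\mathrm{End}(x) = \kk$, and $\Hom(x,y)=0$ for distinct $x,y\in\sS$). For the inductive step, given $d = d_1 \oplus d_2 \in \sS * (\sS)_{n-1}$ with defining triangle $s \to d \to s' \to \Sigma s$, I would exploit that $\mathrm{End}(s)$ is a product of matrix algebras over $\kk$ (by orthogonality) and apply a Fitting-style argument: decompose $s = s^{(1)} \oplus s^{(2)}$ so that $s^{(i)}$ maps into $d_i$, and use the octahedral axiom to split the original triangle into triangles $s^{(i)} \to d_i \to s'^{(i)} \to \Sigma s^{(i)}$. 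The inductive hypothesis then gives $s'^{(i)} \in (\sS)_{n-1}$, so $d_i \in (\sS)_n$. For (2), set $\sE = \bigcup_{n \geq 1}(\sS)_n$; clearly $\sS \subseteq \sE \subseteq \extn{\sS}$, and associativity of the $*$-product yields $(\sS)_m * (\sS)_n \subseteq (\sS)_{m+n}$, so $\sE$ is extension-closed, whence minimality of $\extn{\sS}$ gives equality.

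For (3), given $t \in \extn{\sT}$, I would construct a right $\extn{\sS}$-approximation by induction on the minimal $n$ with $t \in (\sT)_n$ (the left case being dual). For the base case $t \in \sT$: if $t \in \sS$ the identity is an approximation; otherwise the orthogonality of $\sT$ together with a short triangle-induction using (1) forces $\Hom(\extn{\sS}, t) = 0$, so the zero morphism suffices. For the inductive step, present $t$ via $t_1 \to t \to t_2 \to \Sigma t_1$ with $t_1 \in \sT$ and $t_2 \in (\sT)_{n-1}$, and glue the inductively obtained approximations $a_1 \to t_1$ and $a_2 \to t_2$ using the octahedral axiom to produce a candidate $a \to t$ with $a \in \extn{\sS}$ by part (2). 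Since $\sD$ is Hom-finite and Krull--Schmidt, any such precover automatically refines to a cover.

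The main obstacle is the gluing step in (3): one cannot in general lift $a_2 \to t_2$ directly to a morphism $a_2 \to t$, because the obstruction in $\Hom(a_2, \Sigma t_1)$ need not vanish. The resolution is to replace $a_2$ by a suitable pullback along $t_1 \to t \to t_2 \to \Sigma t_1$: this produces a triangle $a_1' \to a_2' \to a_2 \to \Sigma a_1'$ in which $a_1' \to t_1$ factors through the approximation $a_1 \to t_1$ and in which the connecting map absorbs the obstruction. Iterating this construction, part (2) keeps the assembled approximating object inside $\extn{\sS}$, and the orthogonality of $\sT$ (together with (1)) is what guarantees that the cone of the resulting map lies in $\extn{\sS}\orth$, certifying the approximation property.
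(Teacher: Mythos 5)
The paper gives no proof of this lemma; all three parts are attributed to prior work (Dugas \cite[Lemmas 2.3, 2.7, Theorem 3.3]{Dugas} and \cite[Theorem 2.11]{CSP20}), so you are supplying an argument where the paper supplies a citation. Your argument for (2) is correct and standard. Your sketch of (1) is the right Fitting-style idea, but the phrase ``decompose $s = s^{(1)} \oplus s^{(2)}$ so that $s^{(i)}$ maps into $d_i$'' glosses the real step: the given morphism $s \to d_1 \oplus d_2$ need not be compatible with any pre-existing splitting of $s$. What one actually does (as in Dugas) is extend the idempotent $i_1 p_1 \in \mathrm{End}(d)$ to an endomorphism of the whole triangle via (TR3), obtaining some $\phi \in \mathrm{End}(s)$ which is generally not idempotent, and then use the semisimplicity of $\mathrm{End}(s)$ (a consequence of orthogonality of $\sS$) to replace $\phi$ by a compatible idempotent before splitting the triangle.

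Your argument for (3) has a genuine gap, precisely in the case you flag. When the first filtration term satisfies $t_1 \in \sT \setminus \sS$, the right $\extn{\sS}$-approximation of $t_1$ is $a_1 = 0$; so if ``$a_1' \to t_1$ factors through $a_1 \to t_1$'' then $a_1' \to t_1$ is zero, and the connecting map $a_2 \to \Sigma a_1'$ cannot absorb a nonzero obstruction $a_2 \to \Sigma t_1$. Meanwhile the homotopy pullback of $a_2 \to t_2 \leftarrow t$ sits in a triangle $t_1 \to a_2' \to a_2 \to \Sigma t_1$, so $a_2'$ is an extension of $a_2$ by $t_1 \notin \extn{\sS}$ and leaves $\extn{\sS}$. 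A concrete instance where this breaks: take $\sD = \Db(\kk A_2)$ with $A_2 \colon 1 \to 2$, $\sT = \{S_1, S_2\}$, $\sS = \{S_1\}$, and $t = P_1$. The triangle $S_2 \to P_1 \to S_1 \to \Sigma S_2$ has $t_1 = S_2 \in \sT \setminus \sS$, the right $\extn{\sS}$-approximation of $t_2 = S_1$ is the identity, the obstruction $S_1 \to \Sigma S_2$ is nonzero, the homotopy pullback is $P_1$ itself, and the actual right $\extn{\sS}$-approximation of $P_1$ is $0 \to P_1$ (since $\Hom(S_1, P_1) = 0$) --- which your gluing step does not produce. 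The cited proofs do not assemble approximations one filtration step at a time in this way; they establish structural facts about $\extn{\sS}$ inside $\extn{\sT}$ (e.g.\ that $\extn{\sS}$ is a length abelian category with simples $\sS$, and a compatibility between $\sS$-filtrations and $\sT \setminus \sS$-filtrations) from which the approximations are then extracted. As it stands, your inductive step (3) does not close.
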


Lemma~\ref{lem:basic-properties}$(2)$ means that the following definition makes sense.

\begin{definition}[{\cite[Definition 2.5]{Dugas}}]
Let $\sS$ be an orthogonal collection in $\sD$. The \emph{$\sS$-length} (or simply \emph{length}) of $x \in \extn{\sS}$ is the smallest natural number $n$ such that $x \in (\sS)_n$.
\end{definition}

Recall the following characterisation of $w$-simple-minded systems from \cite{CSP20}.

\begin{proposition}[{\cite[Proposition 2.13]{CSP20}}] \label{prop:sms-char}
Let $\sS$ be a collection of indecomposable objects in $\sD$, and let $w \geq 1$ be an integer. The following conditions are equivalent:
\begin{enumerate}[label=(\arabic*)]
\item $\sS$ is a $w$-simple-minded system.
\item $\sS$ is a right $w$-Riedtmann configuration such that $\extn{\sS}$ is covariantly finite in $\sD$.
\item $\sS$ is a left $w$-Riedtmann configuration such that $\extn{\sS}$ is contravariantly finite in $\sD$.
\item $\sS$ is a $w$-Riedtmann configuration such that $\extn{\sS}$ is functorially finite in $\sD$.
\end{enumerate}
\end{proposition}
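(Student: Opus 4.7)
The plan is to first observe that the definitions immediately give $(4) \Longleftrightarrow (2) \wedge (3)$: functorial finiteness is the conjunction of contravariant and covariant finiteness, and a $w$-Riedtmann configuration is the conjunction of its left and right halves. Hence it suffices to prove $(1) \Longleftrightarrow (2)$; the equivalence $(1) \Longleftrightarrow (3)$ then follows by a completely dual argument.

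A preliminary step I would establish is that the $w$-orthogonality of $\sS$ lifts to $\extn{\sS}$: for $x, y \in \extn{\sS}$, one has $\Hom_\sD(x, \Sigma^{-k}y) = 0$ for $1 \leq k \leq w-1$. This is proved by induction on the $\sS$-length, using Lemma~\ref{lem:basic-properties}(2) and the long exact $\Hom$-sequence of a defining triangle $s \to x \to z \to \Sigma s$ with $s \in \sS$ and $z$ of smaller $\sS$-length.

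For $(1) \Longrightarrow (2)$, the given decomposition $\sD = \extn{\sS} * \Sigma^{-1}\extn{\sS} * \cdots * \Sigma^{1-w}\extn{\sS}$ directly furnishes, for each $d \in \sD$, a triangle $s \to d \to d' \to \Sigma s$ with $s \in \extn{\sS}$ and $d' \in \Sigma^{-1}\extn{\sS} * \cdots * \Sigma^{1-w}\extn{\sS}$; the extended orthogonality then gives $\Hom(\extn{\sS}, d') = 0$, so $s \to d$ is a right $\extn{\sS}$-approximation. Dually, peeling off the last slice of the filtration yields a left $\extn{\sS}$-approximation, proving $\extn{\sS}$ is functorially finite. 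For the right $w$-Riedtmann condition, given $x \in \bigcap_{k=0}^{w-1}{}\orth(\Sigma^{-k}\sS)$, one applies the $w$-slice filtration of $x$ together with the observation that any nonzero object of $\extn{\sS}$ admits a nonzero morphism to some $s \in \sS$ (its ``top'', obtained from the end of an $\sS$-length filtration) to force each slice of $x$ to vanish, yielding $x = 0$; the left $w$-Riedtmann condition follows dually.

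The chief obstacle is $(2) \Longrightarrow (1)$. My strategy is to build the decomposition of an arbitrary $d \in \sD$ by iterating minimal left approximations: peel off the last slice via a minimal left $\Sigma^{1-w}\extn{\sS}$-approximation $d \to t_{w-1}$, then repeat the construction on the resulting fiber to peel the next-to-last slice, and so on for $w$ steps. At each step, the triangulated Wakamatsu lemma supplies a Hom-vanishing on the fiber, and the upgraded orthogonality is used to control the connecting maps in the long exact Hom-sequences. Assembling the approximations via the octahedral axiom produces a triangle $r \to d \to d_{\mathrm{filt}} \to \Sigma r$ with $d_{\mathrm{filt}} \in \extn{\sS} * \Sigma^{-1}\extn{\sS} * \cdots * \Sigma^{1-w}\extn{\sS}$, and one must verify that the residual $r$ lies in $\bigcap_{k=0}^{w-1}{}\orth(\Sigma^{-k}\sS)$; the right $w$-Riedtmann hypothesis then forces $r = 0$, giving $d \simeq d_{\mathrm{filt}}$. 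The main technical difficulty lies in tracking how the Hom-vanishings accumulate through the $w$ iterations and confirming that all $w$ required orthogonality conditions on $r$ are produced simultaneously; this bookkeeping is where the extended orthogonality in $\extn{\sS}$ and the minimality of the approximations are both indispensable.
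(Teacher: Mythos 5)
The paper does not actually prove this proposition: it is quoted verbatim from \cite[Proposition 2.13]{CSP20}, so the only comparison available is with that cited proof, and measured against it your outline has genuine gaps rather than mere bookkeeping to fill in. Your logical skeleton (reduce to $(1)\Leftrightarrow(2)$, dualise for $(3)$, note $(4)$ is the conjunction) and the lifting of $w$-orthogonality to $\extn{\sS}$ are fine. But already in $(1)\Rightarrow(2)$ the step ``dually, peeling off the last slice yields a left $\extn{\sS}$-approximation'' is false: the last slice of the filtration lies in $\Sigma^{1-w}\extn{\sS}$, not in $\extn{\sS}$, and a map $d\to s$ with $s\in\extn{\sS}$ has no reason to factor through it, because $w$-orthogonality only kills $\Hom(\extn{\sS},\Sigma^{-k}\extn{\sS})$ for $1\leq k\leq w-1$ and says nothing about positive shifts $\Hom(\extn{\sS},\Sigma^{k}\extn{\sS})$. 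Covariant finiteness of $\extn{\sS}$ for a $w$-simple-minded system is a genuine theorem, obtained for instance by applying Lemma~\ref{lem:basic-properties}(3) (Dugas, \cite{CSP20}) to the orthogonal collection $\sT=\sS\cup\Sigma^{-1}\sS\cup\cdots\cup\Sigma^{1-w}\sS$, which satisfies $\extn{\sT}=\sD$. (Your peeling of the first slice does correctly give right $\extn{\sS}$-approximations, i.e.\ the half needed for $(3)$; and the Riedtmann part is easier argued by noting $\Hom(x,\sT)=0$ forces $\Hom(x,\extn{\sT})=0$ while $x\in\extn{\sT}$, since in your slice-by-slice version the map from $x$ to its top slice could vanish without that slice vanishing.)

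The more serious gap is in $(2)\Rightarrow(1)$, exactly at the point you defer as ``bookkeeping''. Define residues by $r_0=d$ and triangles $r_i\to r_{i-1}\to\Sigma^{i-w}t_i\to\Sigma r_i$ with $t_i\in\extn{\sS}$ coming from minimal left $\Sigma^{i-w}\extn{\sS}$-approximations; Wakamatsu gives $\Hom(r_i,\Sigma^{i-w}\sS)=0$, but to conclude you need all of $\Hom(r_w,\Sigma^{-k}\sS)=0$, $0\leq k\leq w-1$, and transporting the vanishing from $r_j$ to $r_{j+1}$ through the long exact sequence leaves the obstruction term $\Hom(t_{j+1},\sS)$ with $t_{j+1}\in\extn{\sS}$, which does not vanish: neither minimality nor the extended orthogonality (which never controls shift $0$ or positive shifts) removes it, and the same failure occurs in the dual slice-by-slice scheme for $(3)\Rightarrow(1)$. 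The proof in \cite{CSP20} avoids this by working with the whole subcategory $\extn{\sS\cup\Sigma^{-1}\sS\cup\cdots\cup\Sigma^{1-w}\sS}$ at once: one first shows, by induction on $\sS$-length with the splitting case analysis available because nonzero maps between objects of an orthogonal collection are isomorphisms (compare the proof of Lemma~\ref{lem:extnS-contra-finite-in-E_S}), that this extension closure equals the ordered product $\extn{\sS}*\Sigma^{-1}\extn{\sS}*\cdots*\Sigma^{1-w}\extn{\sS}$ and is covariantly (resp.\ contravariantly) finite, using that $*$-products of such subcategories remain so (cf.\ the use of \cite{SZ} in the proof of Proposition~\ref{prop:SMC-SMS-well-defined}); a single minimal approximation by this subcategory plus Wakamatsu then places the residue in $\bigcap_{k=0}^{w-1}{}\orth(\Sigma^{-k}\sS)$, which the Riedtmann hypothesis kills. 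Without these structural lemmas your iteration cannot be completed, so the proposal as it stands does not constitute a proof.
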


We next provide an analogue of Proposition~\ref{prop:sms-char} for simple-minded collections.

\begin{proposition} \label{prop:smc-char}
Let $\sS$ be a collection of indecomposable objects in $\sD$. Then $\sS$ is a simple-minded collection if and only if $\sS$ is an $\infty$-Riedtmann configuration such that
$\susp \sS$ is contravariantly finite in $\sD$ and $\cosusp \sS$ is covariantly finite in $\sD$.
\end{proposition}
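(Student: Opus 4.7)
The plan is to follow the pattern of Proposition~\ref{prop:sms-char}, with $\susp \sS$ and $\Sigma^{-1}\cosusp \sS$ playing the roles of the aisle and coaisle of the bounded t-structure associated to $\sS$. Note that $\Hom_\sD(\susp \sS, \Sigma^{-1}\cosusp \sS) = 0$ follows immediately from $\infty$-orthogonality, since $\Hom_\sD(\Sigma^a s, \Sigma^b t) = \Hom_\sD(s, \Sigma^{b-a}t) = 0$ whenever $b-a < 0$. For the forward direction, I would invoke the K\"onig--Yang correspondence \cite{KY}: a simple-minded collection $\sS$ determines a bounded t-structure with heart $\extn{\sS}$ in which $\sS$ is the set of simple objects, and boundedness then identifies the aisle as $\susp \sS$ and the coaisle as $\Sigma^{-1}\cosusp \sS$. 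Contravariant finiteness of $\susp \sS$ and covariant finiteness of $\cosusp \sS$ are then immediate, as they are (shifts of) the aisle and coaisle of a t-structure. For the $\infty$-Riedtmann condition, given $0\neq d \in {}\orth(\Sigma^{<0}\sS)\cap(\Sigma^{\geq 0}\sS)\orth$, I would use boundedness to select the minimum and maximum cohomological degrees $i\leq j$ at which $d$ has nonzero cohomology. A simple subobject of $H^i(d)$ yields a nonzero morphism $\Sigma^{-i}s\to d$ and a simple quotient of $H^j(d)$ yields a nonzero morphism $d\to\Sigma^{-j}s'$; since $i\leq j$ forces either $i\leq 0$ or $j>0$, one of these morphisms contradicts the orthogonality assumption, so $d=0$.

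For the reverse direction, given $d\in\sD$, my plan is first to take a minimal right $\susp \sS$-approximation $x\to d$ and complete to a triangle $x\to d\to y\to \Sigma x$; the triangulated Wakamatsu lemma places $y\in(\susp \sS)\orth$. Next, I would take a minimal left $\cosusp \sS$-approximation $y\to c$, forming a triangle $z\to y\to c\to \Sigma z$. Wakamatsu gives $z\in{}\orth(\cosusp \sS)\subseteq{}\orth(\Sigma^{<0}\sS)$, while applying $\Hom_\sD(\Sigma^{\geq 0}\sS,-)$ to this triangle (using the Hom-vanishing above) places $z\in(\Sigma^{\geq 0}\sS)\orth$ as well, so the $\infty$-Riedtmann condition forces $z=0$ and hence $y\simeq c\in\cosusp \sS$. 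To convert the resulting $\sD = \susp \sS * \cosusp \sS$ into $\sD = \susp \sS * \Sigma^{-1}\cosusp \sS$, I would split off the top layer of $\cosusp \sS = \extn{\sS} * \Sigma^{-1}\cosusp \sS$ as a triangle $s_0\to c\to c'\to \Sigma s_0$, and apply the octahedral axiom to the composition $d\to c\to c'$: the resulting cone triangle $e\to d\to c'\to \Sigma e$ has $e$ fitting in a further triangle $x\to e\to s_0\to \Sigma x$ with $x, s_0\in \susp \sS$, hence $e\in\susp \sS$ by extension-closure. Since both $\susp \sS$ and $\Sigma^{-1}\cosusp \sS$ lie in $\thick{\sD}{\sS}$, this decomposition yields $\sD = \thick{\sD}{\sS}$, so that $\sS$ is a simple-minded collection.

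The main obstacle I anticipate is reconciling the output of the two successive Wakamatsu approximations in the reverse direction with the one-suspension shift between $\cosusp \sS$ and the target coaisle $\Sigma^{-1}\cosusp \sS$. The $\infty$-Riedtmann condition is needed precisely to kill the error term $z$ between the two approximations, and the octahedral step is the natural device for absorbing the extra $\extn{\sS}$-layer into $\susp \sS$ so that the final decomposition of $d$ has the required form.
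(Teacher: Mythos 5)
Your proof is correct and follows essentially the same strategy as the paper: identify the bounded t-structure attached to $\sS$ in the forward direction, and in the reverse direction decompose an arbitrary object via two successive approximation triangles, using the $\infty$-Riedtmann condition to kill the error term sitting between them. Two small remarks on where you diverge from the paper's write-up, neither of which affects correctness. In the reverse direction, the paper takes the second truncation with respect to the pair $({}\orth(\Sigma^{<0}\sS),\cosusp\Sigma^{-1}\sS)$ rather than approximating against $\cosusp\sS$; this lands $y$ directly in $\cosusp\Sigma^{-1}\sS$ and avoids your closing octahedral step. In fact that octahedral step is unnecessary even with your choice: since $\cosusp\sS\subseteq\thick{\sD}{\sS}$, the decomposition $d\in\susp\sS*\cosusp\sS$ already gives $d\in\thick{\sD}{\sS}$, which is all that is needed. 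In the forward direction, citing K\"onig--Yang \cite{KY} is slightly loose here since $\sD$ is a general Hom-finite, Krull-Schmidt, $\kk$-linear triangulated category rather than $\Db(A)$; the paper instead invokes \cite[Lemma 2.7]{CSP20} for the tower decomposition $\sD=\bigcup_{n>m}\Sigma^n\extn{\sS}*\cdots*\Sigma^m\extn{\sS}$, from which the identification of the aisle $\susp\sS$ and coaisle $\cosusp\Sigma^{-1}\sS$ and the vanishing ${}\orth(\Sigma^{<0}\sS)\cap(\Sigma^{\geq 0}\sS)\orth=\susp\sS\cap\cosusp\Sigma^{-1}\sS=0$ all follow directly, without the detour through cohomological degrees.
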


\begin{proof}
Suppose that $\sS$ is a simple-minded collection in $\sD$. Since both simple-minded collections and $\infty$-Riedtmann configurations are $\infty$-orthogonal collections, we only need to check the other defining properties of $\infty$-Riedtmann configurations. Then, by definition, $\thick{\sD}{\sS} = \sD$ and by, e.g.\ \cite[Lemma 2.7]{CSP20}, we have 
\[
\sD = \bigcup_{n > m} \Sigma^n \extn{\sS} * \Sigma^{n-1} \extn{\sS} * \cdots * \Sigma^{m+1} \extn{\sS} * \Sigma^m \extn{\sS}.
\]
It therefore follows immediately that $(\susp \sS,\cosusp \Sigma^{-1} \sS)$ is a bounded t-structure in $\sD$, and, in particular, $\susp \sS$ is contravariantly finite and $\cosusp \sS$ is covariantly finite. Finally, $(\Sigma^{\geq 0} \sS)\orth = (\susp \sS)\orth = \cosusp \Sigma^{-1} \sS$ and ${}\orth (\Sigma^{< 0} \sS) = {}\orth(\cosusp \Sigma^{-1} \sS) = \susp \sS$, so that ${}\orth (\Sigma^{< 0} \sS) \cap (\Sigma^{\geq 0} \sS)\orth = \susp \sS \cap \cosusp \Sigma^{-1} \sS = 0$. 

Conversely, suppose that $\sS$ is an $\infty$-Riedtmann configuration such that $\susp \sS$ is contravariantly finite in $\sD$ and $\cosusp \sS$ is covariantly finite in $\sD$. To see that $\sS$ is a simple-minded collection we need only to check that $\thick{\sD}{\sS} = \sD$. Let $d$ be an object of $\sD$. Since $\susp \sS$ is contravariantly finite in $\sD$ there is a decomposition triangle
$\tri{x}{d}{y}$ with respect to the t-structure $(\susp \sS, (\Sigma^{\geq 0} \sS)\orth)$. Now, since $\cosusp \sS$ is covariantly finite in $\sD$ there is a decomposition triangle of $y$, $\Sigma^{-1} v \to u \to y \to v$ with respect to the t-structure $({}\orth(\Sigma^{<0} \sS), \cosusp \Sigma^{-1} \sS)$. Note that $\cosusp \Sigma^{-1} \sS \subseteq (\Sigma^{\geq 0} \sS)\orth$ so that $\Sigma^{-1} v \in (\Sigma^{\geq -1} \sS)\orth \subseteq (\Sigma^{\geq 0} \sS)\orth$. Since $(\Sigma^{\geq 0} \sS)\orth$ is a perpendicular category, it is closed under extensions. It follows that $u \in {}\orth(\Sigma^{<0} \sS) \cap (\Sigma^{\geq 0} \sS)\orth$. Hence, since $\sS$ is $\infty$-Riedtmann, we obtain that $u = 0$ and $y \simeq v$. In particular, we get
\[
d \in \susp \sS * \cosusp \Sigma^{-1} \sS = \thick{\sD}{\sS}.
\]
Hence, $\sD = \thick{\sD}{\sS}$ and $\sS$ is a simple-minded collection.
\end{proof}

Finally, we end this section with the following restatement of Corollary~\ref{cor:heart-finite} in the language of simple-minded collections.

\begin{corollary} \label{cor:extn-of-smc-funct-finite}
Suppose $\sD = \Db(A)$ for some finite-dimensional $\kk$-algebra $A$. If $\sS$ is a simple-minded collection in $\sD$, then $\extn{\sS}$ is functorially finite in $\sD$.
\end{corollary}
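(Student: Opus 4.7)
The plan is to identify $\extn{\sS}$ as the heart of an algebraic bounded t-structure in $\sD=\Db(A)$ and then invoke Corollary~\ref{cor:heart-finite}. The bulk of the work is in the identification; functorial finiteness is then immediate.

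First, I would use Proposition~\ref{prop:smc-char} to conclude that $\susp\sS$ is contravariantly finite in $\sD$ and $\cosusp\sS$ is covariantly finite in $\sD$, and that ${}\orth(\Sigma^{<0}\sS)\cap(\Sigma^{\geq 0}\sS)\orth=0$. Combined with $\infty$-orthogonality, this yields the t-structure $(\sX,\sY)=(\susp\sS,\cosusp\Sigma^{-1}\sS)$ with heart $\sH=\sX\cap\Sigma\sY$, and the identity $\thick{\sD}{\sS}=\sD$ from the definition of a simple-minded collection ensures that this t-structure is bounded (using the characterisation of bounded t-structures recalled in Section~\ref{sec:prelim}). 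A standard check (using $\infty$-orthogonality of $\sS$ to place $\sS\subseteq\sH$, and the characterisation $\sH=\susp\sS\cap\cosusp\Sigma^{-1}\sS$) shows that $\sH=\extn{\sS}$.

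Next I would verify that this t-structure is algebraic. The set $\sS$ is finite by definition of a simple-minded collection, and orthogonality of $\sS$ together with Lemma~\ref{lem:basic-properties}(1) shows that each element of $\sS$ is a simple object of $\sH$ with endomorphism ring $\kk$. By Lemma~\ref{lem:basic-properties}(2) every $h\in\extn{\sS}$ has finite $\sS$-length, and it is standard (using the recursive structure of $(\sS)_n=\sS*(\sS)_{n-1}$ inside the abelian category $\sH$) to refine the resulting filtration to a composition series whose factors lie in $\sS$. Hence $\sH$ is a length category with finitely many simples, i.e.\ $(\sX,\sY)$ is algebraic.

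Finally, applying Corollary~\ref{cor:heart-finite} to this algebraic t-structure in $\Db(A)$ gives that $\extn{\sS}=\sH$ is functorially finite in $\sD$, as required. The main potential obstacle is the bookkeeping in verifying that $\extn{\sS}$ is genuinely a length category with $\sS$ as its simple objects, but this is well-known from the Koenig--Yang correspondences and follows from the orthogonality of $\sS$ together with Lemma~\ref{lem:basic-properties}.
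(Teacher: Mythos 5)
Your proposal is correct and matches the paper's approach: the paper presents this corollary explicitly as ``the following restatement of Corollary~\ref{cor:heart-finite} in the language of simple-minded collections,'' and your argument makes precisely that identification. You correctly recognise $(\susp\sS,\cosusp\Sigma^{-1}\sS)$ as the bounded t-structure with heart $\extn{\sS}$ (this is the König--Yang correspondence), verify it is algebraic, and invoke Corollary~\ref{cor:heart-finite}; this is the intended argument.
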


%============================================================================
% SECTION
\section{Simple-minded collections vs $w$-simple minded systems} \label{sec:bijection}
%============================================================================

Let $Q$ be a finite acyclic quiver and $w \geq 1$. Recall from Section~\ref{sec:hereditary} the construction of the negative cluster category $\Cw = \Cw(\kk Q)$ as an orbit category of $\sD = \Db(\kk Q)$ and the fundamental domain $\Fw \coloneqq \sX \cap \Sigma^w \SSS \sY$. The aim of this section is to establish the following theorem, which generalises the case of $Q$ simply-laced Dynkin of \cite[Theorem 1.2]{IJ}. The proof of Iyama and Jin uses that subcategories of Hom-finite triangulated categories with finitely many indecomposable objects are automatically functorially finite; we use Theorem~\ref{thm:heart-finite} to obtain functorial finiteness in our setting.

\begin{theorem}[Theorem~\ref{thm:bijection}] \label{thm:SMC-SMS-bijection}
Let $Q$ be an acyclic quiver. The natural projection functor $\pi \colon \Db(\kk Q) \to \Cw$ induces a bijection 
\[ 
\pi \colon
\left\{ \parbox{5.35cm}{\centering simple-minded collections of $\Db(\kk Q)$ contained in $\Fw$} \right\} 
\bij
\left\{ \parbox{4.75cm}{\centering $w$-simple-minded systems in $\Cw$} \right\}. 
\]
\end{theorem}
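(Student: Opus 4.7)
The plan is to use the bijection between indecomposable objects of $\Fw$ and indecomposable objects of $\Cw$ induced by $\pi$ (recalled in Section~\ref{sec:hereditary}), which determines the proposed map at the level of objects; the entire content is to verify that simple-minded collections contained in $\Fw$ correspond exactly to $w$-simple-minded systems under this correspondence. The main tools will be Lemma~\ref{lem:morphisms}, which enables direct translation between $\Hom$-vanishing statements in $\sD = \Db(\kk Q)$ and $\Cw$; Corollary~\ref{cor:extn-of-smc-funct-finite} (itself a consequence of the new Theorem~\ref{thm:functorially-finite}), which delivers functorial finiteness of the extension closure of any simple-minded collection; and the characterisations in Propositions~\ref{prop:sms-char} and~\ref{prop:smc-char}.

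For the forward direction (SMC $\to$ $w$-SMS), let $\sS \subseteq \Fw$ be a simple-minded collection in $\sD$ and set $\sT = \pi(\sS)$. Plugging the $\infty$-orthogonality of $\sS$ into Lemma~\ref{lem:morphisms} immediately yields the $w$-orthogonality of $\sT$: for $x, y \in \sS$ and $0 \leq i \leq w-1$,
\[
\Hom_{\Cw}(x,\Sigma^{-i}y) = \Hom_\sD(x,\Sigma^{-i}y) \oplus D\Hom_\sD(y,\Sigma^{i-w}x)
\]
reduces to $\delta_{xy}\delta_{i,0}\kk$ because both summands vanish for $1 \leq i \leq w-1$, and the second summand vanishes also at $i=0$ (since $i-w \leq -1$). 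To upgrade to a $w$-SMS via Proposition~\ref{prop:sms-char}(4), functorial finiteness of $\extn{\sT}$ in $\Cw$ will be obtained by pushing through $\pi$ the functorial finiteness of $\extn{\sS}$ in $\sD$ supplied by Corollary~\ref{cor:extn-of-smc-funct-finite}, and the $w$-Riedtmann property of $\sT$ will be derived from the $\infty$-Riedtmann property of $\sS$ (Proposition~\ref{prop:smc-char}) by rewriting the perpendicular conditions via Lemma~\ref{lem:morphisms}.

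For the backward direction ($w$-SMS $\to$ SMC), let $\sT$ be a $w$-simple-minded system in $\Cw$ and let $\sS \subseteq \Fw$ be the unique lift whose indecomposables correspond to those of $\sT$. Applying Lemma~\ref{lem:morphisms} to the $w$-orthogonality of $\sT$ forces both summands of $\Hom_{\Cw}(x,\Sigma^{-i}y)$ to vanish for $0 \leq i \leq w-1$ (modulo the identity term at $i=0, x=y$), which yields $\Hom_\sD(x, \Sigma^{-k}y) = 0$ for $1 \leq k \leq w$. For $k > w$ the vanishing is automatic: $\sD$ is hereditary, so $\Hom_\sD(\Sigma^a M_x, \Sigma^{b-k}M_y) = \Hom_\sD(M_x, \Sigma^{b-k-a}M_y)$ vanishes unless $b-k-a \in \{0,1\}$, and the positional constraint $x, y \in \Fw$ restricts $a, b$ to $\{0, \ldots, w\}$, which precludes this when $k > w$. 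Hence $\sS$ is $\infty$-orthogonal. To conclude using Proposition~\ref{prop:smc-char}, I would then translate the $w$-Riedtmann property of $\sT$ into the $\infty$-Riedtmann property of $\sS$ via Lemma~\ref{lem:morphisms}, and lift functorial finiteness of $\extn{\sT}$ in $\Cw$ to functorial finiteness of $\susp \sS$ and $\cosusp \sS$ in $\sD$ through the truncation calculus of the associated bounded t-structure.

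The main obstacle is the transport of functorial finiteness across $\pi$: $\Hom$-vanishing and Riedtmann conditions translate cleanly through Lemma~\ref{lem:morphisms}, whereas functorial finiteness requires the new input of Section~\ref{sec:funct-finite}. This is precisely the point that allows the argument to extend beyond the Dynkin setting of \cite{IJ}, where functorial finiteness was automatic from the finiteness of indecomposables.
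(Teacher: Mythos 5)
Your skeleton (injectivity from the bijection on indecomposables, forward direction via Proposition~\ref{prop:sms-char}, backward direction via Proposition~\ref{prop:smc-char}, with Corollary~\ref{cor:extn-of-smc-funct-finite} as the new non-Dynkin input) matches the paper, and your $\Hom$-computations for $w$-orthogonality and $\infty$-orthogonality of the lift are fine. But the step you describe as ``pushing functorial finiteness through $\pi$'' is a genuine gap, in both directions. Morphisms in the orbit category are $\Hom_{\Cw}(\sS,\pi(d)) = \bigoplus_{n\in\bZ}\Hom_{\Db(\kk Q)}(F^n\sS,d)$, so a right $\extn{\sS}_{\Db(\kk Q)}$-approximation in $\Db(\kk Q)$ only controls the $n=0$ summand and does \emph{not} push forward to a right $\extn{\sS}_{\Cw}$-approximation; conversely, an approximation downstairs does not lift to one by $\extn{\sS}_{\Db(\kk Q)}$. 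The paper resolves this by replacing $\extn{\sS}_{\Db(\kk Q)}$ with the $F$-stable subcategory $\ES=\extn{F^n\sS\mid n\in\bZ}_{\Db(\kk Q)}$ and proving the comparison results Lemmas~\ref{lem:smc-vanishing-conditions}--\ref{lem:ES-funct-finite} ($\pi(\ES)=\extn{\sS}_{\Cw}$, and $\ES$ is contravariantly/covariantly finite in $\Db(\kk Q)$ iff $\extn{\sS}_{\Cw}$ is so in $\Cw$); contravariant finiteness of $\ES$ then needs the hereditary boundedness of $\Hom(F^i\sS,-)$ together with \cite[Lemma 5.3(1)]{SZ}, and the converse transfer needs the octahedral induction of Lemma~\ref{lem:extnS-contra-finite-in-E_S}. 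None of this machinery appears in your plan, and without it the transport you invoke simply fails.

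In the backward direction there is a second gap: you cannot ``translate the $w$-Riedtmann property of $\sT$ into the $\infty$-Riedtmann property of $\sS$ via Lemma~\ref{lem:morphisms}.'' For a general object $d$ of $\Db(\kk Q)$, membership of $d$ in ${}\orth(\Sigma^{<0}\sS)\cap(\Sigma^{\geq 0}\sS)\orth$ again only gives vanishing of the $n=0$ summands of $\Hom_{\Cw}(\Sigma^k\sT,\pi(d))$ and $\Hom_{\Cw}(\pi(d),\Sigma^k\sT)$, so the Riedtmann property of $\sT$ in $\Cw$ cannot be applied to force $d=0$; moreover Lemma~\ref{lem:morphisms} only computes $\Hom$-spaces for objects in $\Fw$ within a bounded window. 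Your appeal to ``the truncation calculus of the associated bounded t-structure'' to get contravariant finiteness of $\susp\sS$ and covariant finiteness of $\cosusp\sS$ is also circular: the existence of that bounded t-structure is essentially equivalent to $\sS$ being a simple-minded collection. The paper instead proves generation $\thick{\Db(\kk Q)}{\sS}=\Db(\kk Q)$ directly: after transferring functorial finiteness of $\extn{\sS}_{\Cw}$ to $\extn{\sS}_{\Db(\kk Q)}$ via $\ES$, it invokes \cite[Proposition 3.2]{Jin2} to obtain the t-structure $({}\orth(\Sigma^{<0}\sS),\cosusp\sS)$ and then kills the aisle truncation of a sufficiently negative object using the hereditary property and the endgame of \cite[Theorem 1.2]{IJ}. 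The Kronecker example following Proposition~\ref{prop:SMC-SMS-well-defined} shows why $\Hom$-vanishing conditions alone can never suffice here, so this finiteness-based generation argument is unavoidable rather than a routine translation.
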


The proof of this theorem comes in two parts. We must first establish that the map of the theorem induced by the functor $\pi\colon \Db(\kk Q) \to \Cw$, which, by abuse of notation, we also call $\pi$, is well defined and secondly that the map is surjective. Once we know that the map $\pi$ is well defined, injectivity follows immediately because the projection functor $\pi$ induces a bijection between $\ind{\Cw}$ and $\ind{\Fw}$, where $\Fw$ is the fundamental domain; see Section~\ref{sec:hereditary} for notation.

\subsection{The induced map $\pi$ is well defined}

In this section, we establish the following. 

\begin{proposition} \label{prop:SMC-SMS-well-defined}
  Let $Q$ be an acyclic quiver. The natural 
  functor $\pi \colon \Db(\kk Q) \to \Cw$ induces a well-defined map 
\[ 
\pi \colon
\left\{ \parbox{5.35cm}{\centering simple-minded collections of $\Db(\kk Q)$ contained in $\Fw$} \right\} 
\too
\left\{ \parbox{4.75cm}{\centering $w$-simple-minded systems in $\Cw$} \right\}. 
\]
\end{proposition}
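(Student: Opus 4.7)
The plan is to verify the defining properties of a $w$-simple-minded system for $\pi(\sS)$ in $\Cw$. By Proposition~\ref{prop:sms-char}, it suffices to show that $\pi(\sS)$ is $w$-orthogonal, is a $w$-Riedtmann configuration, and that $\extn{\pi(\sS)}$ is functorially finite in $\Cw$. A structural observation used throughout is that $\sT := \extn{\sS}$ is contained in $\Fw$: indeed, $\Fw = \sX \cap \Sigma^w \SSS \sY$ is extension-closed (being the intersection of an aisle with a shifted coaisle) and contains $\sS$, hence it contains the extension closure $\sT$.

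The $w$-orthogonality follows directly from Lemma~\ref{lem:morphisms}. For $y, y' \in \sS \subseteq \Fw$ and $0 \leq i \leq w-1$,
\[
\Hom_\Cw(y, \Sigma^{-i} y') = \Hom_\Db(y, \Sigma^{-i} y') \oplus D\Hom_\Db(y', \Sigma^{i-w} y).
\]
Both summands vanish by the $\infty$-orthogonality of $\sS$ (from its being a simple-minded collection) whenever $i \geq 1$, while at $i = 0$ only the first summand survives and equals $\delta_{yy'}\kk$.

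For the remaining conditions, Corollary~\ref{cor:extn-of-smc-funct-finite} provides that $\sT$ is functorially finite in $\Db(\kk Q)$. Combined with Lemma~\ref{lem:morphisms}---which writes each $\Cw$-Hom as a finite direct sum of $\Db$-Homs involving $F$-shifts---one can assemble right and left $\pi(\sT)$-approximations in $\Cw$ from the corresponding $\sT$-approximations in $\Db(\kk Q)$, yielding that $\extn{\pi(\sS)} = \pi(\sT)$ is functorially finite in $\Cw$. For the $w$-Riedtmann condition, suppose $c \in \bigcap_{k=0}^{w-1} (\Sigma^k \pi(\sS))^\perp$ with representative $x \in \Fw$. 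Applying Lemma~\ref{lem:morphisms} converts the $\Cw$-vanishing to $\Hom_\Db(\Sigma^k y, x) = 0$ for $y \in \sS$, $k \in [0, w-1]$, and $\Hom_\Db(x, \Sigma^j y) = 0$ for $j \in [-w, -1]$. Combined with the cohomological bounds $x, \sS \subseteq \Fw \subseteq \sD^{[0, w]}$ and the t-structure vanishings $\Hom_\Db(\sD^{\geq w+1}, \sD^{\leq w}) = 0$ and $\Hom_\Db(\sX, \sY) = 0$, this extends to $\Hom_\Db(\Sigma^k y, x) = 0$ for all $k \geq 0$ with $k \neq w$, and $\Hom_\Db(x, \Sigma^j y) = 0$ for all $j < 0$. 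The main obstacle is closing the gap at $k = w$, which will require a careful use of Serre duality and of the injective structure of the degree-$w$ component of $x \in \Fw$; once this final vanishing is in place, the $\infty$-Riedtmann property of $\sS$ in $\Db(\kk Q)$ (from Proposition~\ref{prop:smc-char}) forces $x = 0$ and therefore $c = 0$, completing the verification.
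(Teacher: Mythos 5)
Your verification of $w$-orthogonality via Lemma~\ref{lem:morphisms} is fine, and your reduction of the remaining work to (i) functorial finiteness of $\extn{\pi(\sS)}_{\Cw}$ and (ii) a Riedtmann-type vanishing is the right overall shape. However, both (i) and (ii) contain genuine gaps, and (i) is where the real content of the proposition lies.

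For (i): knowing that $\extn{\sS}_{\Db(\kk Q)}$ is functorially finite in $\Db(\kk Q)$ (Corollary~\ref{cor:extn-of-smc-funct-finite}) does not let you ``assemble'' approximations in $\Cw$ in the way you assert. Since $\Hom_{\Cw}(\pi(s),\pi(d)) = \bigoplus_{n\in\bZ}\Hom_{\Db(\kk Q)}(s,F^n d)$, a right $\extn{\pi(\sS)}_{\Cw}$-approximation of $\pi(d)$ must cover morphisms from \emph{all} $F$-translates $F^n\sS$ into $d$; equivalently, what is needed upstairs is an approximation of $d$ by the larger category $\ES = \extn{F^n\sS \mid n\in\bZ}$, not by $\extn{\sS}$. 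Moreover, an object of $\extn{\pi(\sS)}_{\Cw}$ lifts only to an object of $\ES$, not of $\extn{\sS}$. Making this work requires: the orthogonality of the family $\{F^n\sS\}$ and the vanishing $\Ext^1(\sS,F^{\geq 1}\sS)=0$ (Lemma~\ref{lem:smc-vanishing-conditions}); the resulting ordered filtration structure of $\ES$ (Corollary~\ref{cor:order}, Lemma~\ref{lem:orderedE_S}); the identification $\pi(\ES)=\extn{\sS}_{\Cw}$ (Lemma~\ref{lem:extension-closure}); the transfer statement that $\ES$ is contravariantly finite in $\Db(\kk Q)$ if and only if $\extn{\sS}_{\Cw}$ is contravariantly finite in $\Cw$ (Lemma~\ref{lem:ES-funct-finite}); and finally, to get contravariant finiteness of $\ES$ itself, the hereditary fact that only finitely many $i$ satisfy $\Hom(F^i\sS,D)\neq 0$, combined with a gluing argument for $\ES^{[m,n]}$ (via \cite[Lemma 5.3(1)]{SZ}). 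None of this is supplied, or even sketched, by your single sentence, so the central step of the proof is missing.

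For (ii): you explicitly leave the case $k=w$ open (``will require a careful use of Serre duality and of the injective structure\dots''), so as written this part is an admitted gap rather than a proof; the degree bookkeeping you do correctly isolates where the difficulty sits, but the difficult case is exactly the one not closed. The paper sidesteps this entirely by invoking the dual of \cite[Proposition 3.6]{IJ}, which already shows (for any acyclic $Q$) that $\pi(\sS)$ is a \emph{left} $w$-Riedtmann configuration; note also that by Proposition~\ref{prop:sms-char}(3) one then only needs \emph{contravariant} finiteness of $\extn{\pi(\sS)}_{\Cw}$, so the two-sided Riedtmann property and covariant finiteness you aim for are more than is required.
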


In \cite[Proposition 3.6]{IJ}, Iyama and Jin show that $\pi$ induces a well-defined map
\[
\tilde{\pi} \colon
\left\{ \parbox{5.35cm}{\centering simple-minded collections of $\Db(\kk Q)$ contained in $\Fw$} \right\}
\too 
\{w\text{-Riedtmann configurations in $\Cw$}\}.
\]
When $Q$ is simply-laced Dynkin, by Proposition~\ref{prop:sms-char}, this is enough to establish Proposition~\ref{prop:SMC-SMS-well-defined}. However, more work is required for an arbitrary acyclic quiver $Q$. 
In fact, the following example illustrates that $\tilde{\pi}$ is not surjective when $Q$ is not simply-laced Dynkin.

\begin{example}
Let $Q = \tA{1}$ be the Kronecker quiver and $w = 1$. Partition the set of homogeneous tubes into two nonempty, disjoint sets $\Lambda$ and $\Omega$ and set 
\[
\sS \coloneqq \{ S_\lambda \mid \lambda \in \Lambda\} \cup \{ \Sigma S_\omega \mid \omega \in \Omega\},
\]
where $S_\lambda$ (resp.\ $S_\omega$) denotes the quasi-simple modules lying on the mouth of the tubes indexed by $\Lambda$ (resp.\ $\Omega$). Then $\sS$ is a $1$-Riedtmann configuration in $\sC_{-1} \coloneqq \sC_{-1} (\kk \tA{1})$; the required Hom-vanishing needs $\Lambda\neq\varnothing$ and $\Omega\neq\varnothing$. However, $\sS$ is not a simple-minded collection in $\sD \coloneqq \Db(\kk \tA{1})$ since $\thick{\sD}{\sS} \neq \sD$. Note also that $\extn{\sS}_{\sC_{-1}}$ is not functorially finite in $\sC_{-1}$ and so $\sS$ is not a simple-minded system in $\sC_{-1}$.  
\end{example}

To prove Proposition~\ref{prop:SMC-SMS-well-defined}, we need some lemmas. Recall $F = \Sigma^w \SSS\colon \Db(\kk Q) \to \Db(\kk Q)$.

\begin{lemma}\label{lem:smc-vanishing-conditions}
Let $\sS$ be an $\infty$-orthogonal collection of $\Db(\kk Q)$ contained in $\Fw$. 
\begin{enumerate}
\item The set $\{F^n \sS \mid n \in \bZ\}$ is an orthogonal collection in $\Db(\kk Q)$. 
\item For $k \geq 1$, we have $\Ext^1_{\Db(\kk Q)} (\sS, F^k \sS) = 0$. 
\end{enumerate}
\end{lemma}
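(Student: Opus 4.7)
The plan is to combine the orbit-category expansion of $\Hom_{\Cw}$ given by Lemma~\ref{lem:morphisms} with Serre duality and the hereditary structure of $\kk Q$, exploiting that every indecomposable of $\Db(\kk Q)$ is a shift of an indecomposable module and that, over a hereditary algebra, $\Hom_{\sD}$ between shifts of modules vanishes outside degree-difference $\{0,1\}$. Note that $\sS$ automatically consists of bricks (hence indecomposables), so each $s \in \sS$ has a well-defined degree lying in $[0, w]$.

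For part (1), since $F$ is an autoequivalence it suffices to show $\Hom_{\sD}(s_1, F^k s_2) = \delta_{k,0}\Hom_{\sD}(s_1, s_2)$ for every $s_1, s_2 \in \sS$ and $k \in \bZ$. Lemma~\ref{lem:morphisms} with $i = 0$ gives
\[
\Hom_{\Cw}(s_1, s_2) = \Hom_{\sD}(s_1, s_2) \oplus D\Hom_{\sD}(s_2, \Sigma^{-w} s_1),
\]
and by the definition of the orbit category the left-hand side equals $\bigoplus_{k\in\bZ}\Hom_{\sD}(s_1, F^k s_2)$. Serre duality identifies the second summand on the right with $\Hom_{\sD}(s_1, F s_2)$, so matching decompositions forces $\Hom_{\sD}(s_1, F^k s_2) = 0$ for every $k \notin \{0, 1\}$. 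For $k = 0$ orthogonality of $\sS$ yields the Kronecker delta. For $k = 1$ Serre duality once more reduces the claim to the vanishing of $\Hom_{\sD}(s_2, \Sigma^{-w} s_1)$, which is delivered by the $\infty$-orthogonality of $\sS$ applied with the positive shift $w \geq 1$.

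For part (2), Serre duality gives
\[
\Ext^1_{\sD}(s_1, F^k s_2) = \Hom_{\sD}(s_1, \Sigma F^k s_2) \cong D\Hom_{\sD}(F^{k-1} s_2, \Sigma^{-w-1} s_1).
\]
Because $F = \Sigma^w \SSS$ shifts the degree of any indecomposable of $\Db(\kk Q)$ up by $w$ or $w+1$, the object $F^{k-1} s_2$ has degree at least $(k-1)w \geq 0$, while $\Sigma^{-w-1} s_1$ has degree at most $-1$ (since the degree of $s_1$ lies in $[0, w]$). The degree difference is therefore at most $-1$, and the hereditary vanishing forces the $\Hom$, and hence the $\Ext^1$, to vanish for every $k \geq 1$. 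The main conceptual point is that Lemma~\ref{lem:morphisms} collapses the formally infinite sum $\bigoplus_{k}\Hom_{\sD}(s_1, F^k s_2)$ to only two summands, so that only the $k = 1$ case in (1) genuinely uses the $\infty$-orthogonality hypothesis, while (2) reduces purely to a degree count.
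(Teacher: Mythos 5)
Your argument is correct, but it takes a different route from the paper's. For part (1), the paper works entirely inside $\Db(\kk Q)$ and kills $\Hom_{\Db(\kk Q)}(S_1,F^nS_2)$ by a case analysis ($n=1$ via Serre duality and $\infty$-orthogonality; $n=2$, $n>2$ and $n\leq -1$ via the standard t-structure and the hereditary degree constraints), whereas you outsource all cases $k\notin\{0,1\}$ to the orbit-category formula of Lemma~\ref{lem:morphisms}: since $\Hom_{\Cw}(s_1,s_2)=\bigoplus_{k}\Hom_{\Db(\kk Q)}(s_1,F^ks_2)$ and the right-hand side of Lemma~\ref{lem:morphisms} accounts exactly for the $k=0$ and (via Serre duality) $k=1$ summands, the remaining summands vanish. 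Note that your ``matching decompositions'' step is really a dimension count over finitely many finite-dimensional summands, which is how it should be read -- it is valid even if one does not insist that the isomorphism in Lemma~\ref{lem:morphisms} is the canonical one. This is legitimate within the paper's logic (Lemma~\ref{lem:morphisms} is an imported result of Iyama--Jin, independent of any orthogonality hypothesis, so there is no circularity), but it is not more elementary: the proof of that lemma is essentially the same degree computation that the paper carries out directly, so the work is relocated rather than avoided. Your $k=1$ step coincides with the paper's $n=1$ case. For part (2), your single Serre-duality-plus-degree argument, $\Ext^1(s_1,F^ks_2)\cong D\Hom(F^{k-1}s_2,\Sigma^{-w-1}s_1)$ with source degree $\geq (k-1)w\geq 0$ and target degree $\leq -1$, is uniform in $k\geq 1$ and is a genuine streamlining: the paper treats $k=1$ separately using $\infty$-orthogonality and only $k\geq 2$ by degrees, while your version shows that (2), and the vanishing for $k\notin\{0,1\}$ in (1), hold for \emph{any} collection of indecomposables in $\Fw$, with $\infty$-orthogonality genuinely needed only for $k=1$ in (1) (and plain orthogonality for $k=0$). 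The facts you use along the way -- that objects of an orthogonal collection are bricks, hence indecomposable shifts of modules with degree in $[0,w]$, and that $F=\Sigma^w\SSS$ raises this degree by $w$ or $w+1$ -- are correct consequences of the hereditary structure and are also implicit in the paper's proof.
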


\begin{proof}
To prove $(1)$, it is enough to show that $\Hom_{\sD} (S_1, F^n S_2) = 0$ for $S_1,  S_2 \in \sS$ and $n \in \bZ \setminus \{0\}$ since $\sS$ is an orthogonal collection in $\sD = \Db(\kk Q)$ and $F$ is an autoequivalence. For $n = 1$, we have $\Hom_{\sD} (S_1, F S_2) \simeq D \Hom_{\sD} (\Sigma^w S_2, S_1) = 0$, since $w \geq 1$ and $\sS$ is an $\infty$-orthogonal collection. 

Now consider $n=2$. Recall that $(\sX,\sY)$ denotes the standard t-structure on $\sD$. Since $S_1 \in \Fw \subseteq \Sigma^{w}  \SSS \sY$, there exists $Y \in \sY$ such that $S_1  = \Sigma^{w} \SSS Y = FY$. Hence, $\Hom_{\sD} (S_1,  F^2 S_2) \simeq D \Hom_{\sD} (\Sigma^{w} S_2, Y) = 0$, since $S_2$, and so $\Sigma^{w} S_2$, lie in $\sX$.

Now let $n > 2$. By the hereditary property, we have $F^n S_2 \in \SSS^n \Sigma^{nw} \sX \subseteq \Sigma^{nw} \sX$ and $S_1 \in \SSS \Sigma^{w} \sY \subseteq \Sigma^{w+1} \sY$. Therefore, $\Hom_{\sD} (S_1, F^n S_2) \simeq \Hom_{\sD} (Y,\Sigma^{(n-1)w-1} X)$, for some $Y \in \sY$ and $X \in \sX$. Since $(n-1) w -1 \geq 1$, it  follows that $\Hom_{\sD} (S_1, F^n S_2) = 0$, again by the hereditary property.

Finally, let $n \leq -1$. Since $S_2 \in \Sigma^{w} \SSS \sY$, we get $F^n S_2 \in \Sigma^{(n+1)w} \SSS^{n+1} \sY \subseteq \Sigma^{(n+1)w} \sY \subseteq \sY$ by the hereditary property, the fact that $\sY$ is the co-aisle of a t-structure, and $(n+1)w \leq 0$. It follows that $\Hom_{\sD} (S_1, F^n S_2) = 0$ since $S_1  \in \sX$. 

Statement $(2)$ follows from the $\infty$-orthogonality property of $\sS$ when $k = 1$, and from the argument above for $n >2$, when $k \geq 2$. 
\end{proof}

\begin{corollary} \label{cor:order}
Let $\sS$ be an $\infty$-orthogonal collection of $\Db(\kk Q)$ contained in $\cF_w$. Then $\extn{F^n \sS}_{\Db(\kk Q)} * \extn{\sS}_{\Db(\kk Q)} \subseteq \extn{\sS}_{\Db(\kk Q)} * \extn{F^n \sS}_{\Db(\kk Q)}$, for all $n \geq 1$. 
\end{corollary}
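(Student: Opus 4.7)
The plan is to establish the stronger Hom-vanishing
\[
\Hom_{\sD}(b, \Sigma a) = 0 \text{ for every } b \in \extn{\sS}_\sD \text{ and } a \in \extn{F^n \sS}_\sD,
\]
where $\sD = \Db(\kk Q)$ and $n \geq 1$. Given this, any triangle $a \to d \to b \to \Sigma a$ with $a \in \extn{F^n \sS}_\sD$ and $b \in \extn{\sS}_\sD$ has a zero connecting morphism and therefore splits, giving $d \simeq a \oplus b$. This object lies in $\extn{\sS}_\sD * \extn{F^n \sS}_\sD$ via the split triangle $b \to b \oplus a \to a \to \Sigma b$, which yields the required inclusion.

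To obtain the Hom-vanishing I would proceed by a double induction on the $F^n \sS$-length of $a$ and the $\sS$-length of $b$, using that Lemma~\ref{lem:basic-properties}(2) expresses $\extn{\sS}_\sD$ and $\extn{F^n\sS}_\sD$ as ascending unions of their length filtrations $(\sS)_m$ and $(F^n\sS)_m$. The base of both inductions is Lemma~\ref{lem:smc-vanishing-conditions}(2), which provides $\Hom_\sD(s, \Sigma F^n t) = \Ext^1_\sD(s, F^n t) = 0$ for all $s, t \in \sS$ and $n \geq 1$.

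First, fix $s \in \sS$ and induct on the $F^n \sS$-length of $a$. For $a$ of length at least $2$, choose a triangle $x \to a \to y \to \Sigma x$ with $x \in F^n\sS$ and $y$ of strictly smaller length (such a decomposition exists by the very definition of $(F^n\sS)_m$). Shifting and applying $\Hom_\sD(s,-)$ yields a long exact sequence in which $\Hom_\sD(s, \Sigma a)$ is sandwiched between $\Hom_\sD(s, \Sigma x)$ and $\Hom_\sD(s, \Sigma y)$, both zero by induction. Having handled all $s \in \sS$, one then induces on the $\sS$-length of $b$ in exactly the symmetric fashion: decompose $b$ via a triangle $u \to b \to v \to \Sigma u$ with $u \in \sS$ and $v$ of smaller length, and apply $\Hom_\sD(-, \Sigma a)$.

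There is no real obstacle here: once Lemma~\ref{lem:smc-vanishing-conditions}(2) is in place, the argument is a mechanical double induction. The only minor point of care is that each inductive step must genuinely reduce length, which is guaranteed by the recursive definition $(\sX)_m = \sX * (\sX)_{m-1}$ from Lemma~\ref{lem:basic-properties}(2).
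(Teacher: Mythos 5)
Your proposal is correct and follows essentially the same route as the paper: both arguments observe that the connecting morphism of any triangle $a \to d \to b \to \Sigma a$ with $a \in \extn{F^n\sS}$, $b \in \extn{\sS}$ vanishes and hence the triangle splits, so $d \simeq a \oplus b$ lies in $\extn{\sS} * \extn{F^n\sS}$ via the split triangle in the other order. The only difference is cosmetic: the paper cites Lemma~\ref{lem:smc-vanishing-conditions}(2) and immediately concludes $\alpha = 0$, leaving implicit the standard d\'evissage from $\Ext^1_\sD(\sS, F^n\sS) = 0$ to $\Hom_\sD(\extn{\sS}, \Sigma\extn{F^n\sS}) = 0$, whereas you spell out that step with the double induction on length. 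Nothing is missing and nothing is different in substance.
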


\begin{proof}
Given $D \in \extn{F^n \sS}_{\sD} * \extn{\sS}_{\sD}$, we have a triangle $\trilabels{F^n S_1}{D}{S_2}{}{}{\alpha}$, with $S_1, S_2 \in \extn{\sS}$. It follows from Lemma~\ref{lem:smc-vanishing-conditions}(2) that $\alpha = 0$. Hence, the triangle splits, and $D \simeq S_2 \oplus F^n S_1 \in \extn{\sS}_{\sD} * \extn{F^n \sS}_{\sD}$. 
\end{proof}

Following Lemma~\ref{lem:smc-vanishing-conditions} and Corollary~\ref{cor:order}, for an $\infty$-orthogonal collection $\sS$ of $\Db(\kk Q)$ contained in $\Fw$, we define the following extension-closed subcategory of $\Db(\kk Q)$:
\[
\ES \coloneqq \extn{F^n \sS \mid n \in \bZ}_{\Db(\kk Q)}.
\]
We now show that each object of $\ES$ admits a filtration by objects in $\{ F^n \sS \mid n \in \bZ\}$.

\begin{lemma}\label{lem:orderedE_S}
Let $\sS$ be an $\infty$-orthogonal collection of $\Db(\kk Q)$ contained in $\Fw$. Then $\ES$ is closed under direct summands and  
\[
\ES = \bigcup_{m<n} \extn{F^m \sS}_{\Db(\kk Q)} * \extn{F^{m+1} \sS}_{\Db(\kk Q)} * \cdots * \extn{F^n \sS}_{\Db(\kk Q)}.
\]
\end{lemma}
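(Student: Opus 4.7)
The plan is to combine Lemma~\ref{lem:smc-vanishing-conditions}(1) with the general theory of extension closures of orthogonal collections from Lemma~\ref{lem:basic-properties}, and then carry out a ``bubble-sort'' argument using Corollary~\ref{cor:order} to reorder filtrations. Throughout set $\sD = \Db(\kk Q)$ and $\sT \coloneqq \bigcup_{n \in \bZ} F^n \sS$.

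By Lemma~\ref{lem:smc-vanishing-conditions}(1), $\sT$ is an orthogonal collection in $\sD$. Lemma~\ref{lem:basic-properties}(2) therefore gives $\ES = \extn{\sT}_\sD = \bigcup_{k \geq 1} (\sT)_k$, and each $(\sT)_k$ is closed under direct summands by Lemma~\ref{lem:basic-properties}(1). Passing to the ascending union shows that $\ES$ is closed under direct summands. The inclusion $\supseteq$ in the displayed equality is immediate because each $\extn{F^i \sS}_\sD$ is contained in the extension-closed subcategory $\ES$.

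For the reverse inclusion, the crux is to show every object of $\ES$ admits a filtration by $\sT$ whose $F$-indices appear in non-decreasing order. Given $D \in \ES$, the previous step writes $D$ via successive triangles with factors $T_1, \ldots, T_k$ lying in $F^{n_1} \sS, \ldots, F^{n_k} \sS$, respectively. Applying the autoequivalence $F^m$ to Corollary~\ref{cor:order} yields the swap rule $\extn{F^n \sS}_\sD * \extn{F^m \sS}_\sD \subseteq \extn{F^m \sS}_\sD * \extn{F^n \sS}_\sD$ whenever $n > m$. Inducting on the number of adjacent inversions in $(n_1, \ldots, n_k)$, each application of the swap rule strictly decreases this count, so after finitely many steps $D$ lies in a $*$-product of terms $\extn{F^{n_i} \sS}_\sD$ with indices in non-decreasing order. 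Grouping together consecutive equal indices and inserting the zero object as a factor for every missing intermediate index then exhibits $D$ in the asserted form.

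The main obstacle is the bookkeeping in the bubble-sort induction: Corollary~\ref{cor:order} only provides a set-theoretic inclusion between $*$-products, so each individual swap has to be threaded through the associativity of $*$ (guaranteed by the octahedral axiom, as recorded in the preliminaries). Once this is organised, the induction terminates because the number of adjacent inversions is a non-negative integer that strictly decreases at every step.
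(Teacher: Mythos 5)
Your proposal is correct and follows essentially the same route as the paper: both use Lemma~\ref{lem:smc-vanishing-conditions}(1) together with Lemma~\ref{lem:basic-properties}(1),(2) to get summand-closure and a filtration of any $D \in \ES$ by objects of $\{F^n\sS \mid n \in \bZ\}$, and then reorder that filtration using Corollary~\ref{cor:order} (the paper compresses your bubble-sort into the phrase ``we can re-order the indices''). One cosmetic fix: the number of \emph{adjacent} inversions need not strictly decrease under a swap (e.g.\ $(3,5,2) \mapsto (3,2,5)$ keeps it at one), so induct on the total number of inversions, i.e.\ pairs $i<j$ with $n_i > n_j$, which drops by exactly one at each step.
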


\begin{proof}
The first statement follows immediately from Lemma~\ref{lem:smc-vanishing-conditions}(1) and Lemma~\ref{lem:basic-properties}(1). 

By Lemma~\ref{lem:smc-vanishing-conditions}(1) and Lemma~\ref{lem:basic-properties}(2), we have $\ES =  \bigcup_{n \geq 1} (\sT)_n$, where $\sT \coloneqq \{F^n \sS \mid n \in \bZ\}$. Hence, for $D \in \ES$ there is a tower of the form
\begin{equation} \label{eq:towerE_S}
\xymatrix@!R=8px{
0 = D_0 \ar[r] & D_1 \ar[r] \ar[d] & D_2 \ar[r] \ar[d] & \cdots \ar[r] & D_{n-1} \ar[r] \ar[d] & D_n = D\ar[d] \\
&                                F^{i_1} S_{1} \ar@{~>}[ul] & F^{i_2} S_{2} \ar@{~>}[ul] & \cdots & F^{i_{n-1}} S_{n-1} \ar@{~>}[ul] & F^{i_n} S_{n} \ar@{~>}[ul]
}
\end{equation}
with $S_\ell \in \sS$ for $1 \leq \ell \leq n$. By Corollary~\ref{cor:order}, we can re-order the indices in the tower above so that $i_1 \leq i_2 \leq \ldots \leq i_n$. Therefore, $\ES \subseteq \bigcup_{m<n} \extn{F^m \sS}_{\sD} * \extn{F^{m+1} \sS}_{\sD} * \cdots * \extn{F^n \sS}_{\sD}$, where $\sD = \Db(\kk Q)$. The other inclusion is trivial. 
\end{proof}

Before we are able to prove Proposition~\ref{prop:SMC-SMS-well-defined} we need the following lemmas connecting the functorial finiteness of $\ES$ in $\Db(\kk Q)$ and the functorial finiteness of the extension closure of $\sS$ in $\Cw$.

\begin{lemma} \label{lem:extension-closure}
Let $\pi \colon \Db(\kk Q) \to \Cw$ be the natural projection functor.
If $\sS$ is an $\infty$-orthogonal collection of $\Db(\kk Q)$ contained in $\Fw$, then $\pi (\ES) = \extn{\sS}_{\Cw}$.
\end{lemma}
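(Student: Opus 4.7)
The plan is to prove the two inclusions $\pi(\ES) \subseteq \extn{\sS}_{\Cw}$ and $\extn{\sS}_{\Cw} \subseteq \pi(\ES)$ separately. The first is straightforward: $\pi$ is a triangulated functor satisfying $\pi(F^n S) \cong \pi(S)$ in $\Cw$ for every $S \in \sS$ and $n \in \bZ$. Combining this with the description of $\ES$ in Lemma~\ref{lem:orderedE_S}, any $D \in \ES$ admits a filtration by objects in $\{F^n \sS \mid n \in \bZ\}$; applying $\pi$ turns this into a filtration of $\pi(D)$ by objects of $\sS$ in $\Cw$, so $\pi(D) \in \extn{\sS}_{\Cw}$.

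For the harder inclusion $\extn{\sS}_{\Cw} \subseteq \pi(\ES)$, I would induct on the $\sS$-length of $C \in \extn{\sS}_{\Cw}$ in $\Cw$. The base case $C \in \add{\sS}$ is immediate, since $\sS \subseteq \ES$. For the inductive step, take a triangle $C' \to C \to S \to \Sigma C'$ in $\Cw$ with $S \in \sS$ and $C'$ of smaller $\sS$-length. By induction, $C' \cong \pi(D')$ for some $D' \in \ES$. The connecting morphism $\alpha \colon S \to \Sigma C'$ in $\Cw$ then decomposes as a finite sum $\alpha = \sum_{n \in I} \alpha_n$ with $\alpha_n \in \Hom_{\Db(\kk Q)}(S, \Sigma F^n D')$, the finiteness of $I$ being a consequence of the boundedness of the cohomology of $D'$ and $S$ and the hereditary structure of $\Db(\kk Q)$. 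Setting $\tilde{D}' = \bigoplus_{n \in I} F^n D' \in \ES$ and assembling the $\alpha_n$ into a single morphism $\hat{\alpha} \colon S \to \Sigma \tilde{D}'$ of $\Db(\kk Q)$, the cone $\tilde{D}$ of $\hat\alpha$ then lies in $\ES$ by extension-closure.

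The principal obstacle is recovering $C$ itself as $\pi(D)$ for some $D \in \ES$ rather than merely $\pi(\tilde{D})$, since $\pi(\tilde{D})$ is the cone of $\hat\alpha$ in $\Cw$ and fits into a triangle with a direct sum of $|I|$ copies of $C'$ on the left, not $C'$ itself. A morphism-of-triangles argument using the natural codiagonal map $\pi(\tilde{D}') \to C'$ (built from the canonical isomorphisms $\pi(F^n D') \cong \pi(D')$) combined with the $3 \times 3$-lemma then exhibits $C$ as the cofiber of a morphism $(|I|-1)\,C' \to \pi(\tilde{D})$ in $\Cw$. Closing the induction at this point requires a secondary argument controlling $|I|$: the base case $|I| = 1$ is direct, since a single component lifts verbatim along $\pi$ and the cone lies in $\ES$ by extension-closure. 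I expect the reduction to $|I|=1$ to exploit the freedom to modify $D'$ (using that $\ES$ is closed under $F$ and under direct summands by Lemma~\ref{lem:orderedE_S}) together with the sharp cohomological restrictions on $\Hom_{\Db(\kk Q)}(S, \Sigma F^n D')$ coming from $\sS \subseteq \Fw$, the hereditary structure of $\Db(\kk Q)$, and the Hom-vanishing in Lemma~\ref{lem:smc-vanishing-conditions}.
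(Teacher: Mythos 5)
Your first inclusion $\pi(\ES) \subseteq \extn{\sS}_{\Cw}$ agrees with the paper: apply $\pi$ to a filtration from Lemma~\ref{lem:orderedE_S}.

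For the reverse inclusion, your approach diverges from the paper's and, as you yourself flag, has a genuine gap. You try to \emph{reconstruct} a lift of the triangle $C' \to C \to S \to \Sigma C'$ by decomposing the connecting map $\alpha \in \Hom_{\Cw}(S,\Sigma C') = \bigoplus_n \Hom_{\Db(\kk Q)}(S,\Sigma F^n D')$ and bundling the components into $\hat\alpha \colon S \to \Sigma \bigoplus_{n\in I} F^n D'$. The cone of $\hat\alpha$ does lie in $\ES$, but it does not project to $C$ once $|I|>1$: its image in $\Cw$ sits in a triangle with $|I|$ copies of $C'$ on the left. Your ``reduction to $|I|=1$'' is only a hope; the Hom-vanishing you cite (Lemma~\ref{lem:smc-vanishing-conditions}) controls maps \emph{between} objects of $\{F^n\sS\}$, not maps $S \to \Sigma F^n D'$ for general $D' \in \ES$, and by Lemma~\ref{lem:morphisms} the space $\Hom_{\Cw}(S,\Sigma C')$ genuinely has two levels, so $|I|=2$ is possible and nothing in your sketch rules it out. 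The codiagonal/$3\times3$ argument you propose just returns the problem you started with.

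The paper avoids this entirely by exploiting how the triangulated structure on the orbit category $\Cw$ is defined (Keller): every distinguished triangle in $\Cw$ is, up to isomorphism, the image under $\pi$ of a distinguished triangle in $\Db(\kk Q)$. So instead of manufacturing a lift from pieces of $\alpha$, one fixes an indecomposable preimage $D$ of $C$ (say $D \in F^k\Fw$) and \emph{lifts the whole triangle at once} to a triangle $F^i D' \to D \to F^j S \to \Sigma F^i D'$ in $\Db(\kk Q)$; the constraints $i \in \{k,k-1\}$, $j \in \{k,k+1\}$ then come for free from Lemma~\ref{lem:morphisms}. The paper also phrases the induction differently and a bit more strongly: it shows $\pi^{-1}(\extn{\sS}_{\Cw}) \subseteq \ES$ by induction on the $\sS$-length of $\pi(D)$, so the inductive hypothesis applies to the first term $F^i D'$ of the lifted triangle directly (whatever lift appears), giving $F^i D' \in \ES$ and hence $D \in \ES$ by extension closure. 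To repair your argument you would need exactly this move: take a lift of the \emph{triangle}, not just of its terms and connecting map, which is available by construction of the triangulation on $\Cw$ and is the step your proposal is missing.
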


\begin{proof}
To see that $\pi(\ES) \subseteq \extn{\sS}_{\Cw}$, observe that, by Lemma~\ref{lem:orderedE_S}, $D \in \ES$ admits a tower as in \eqref{eq:towerE_S} with each $S_\ell \in \sS$. Since $\pi (F^{i_\ell} S_\ell) = S_\ell \in \sS$, applying $\pi$ to the tower above gives a filtration of $\pi(D)$ in $\extn{\sS}_{\Cw}$. 

Conversely, we need to show that if $D \in \pi^{-1}(\extn{\sS}_{\Cw})$ then $D \in \ES$. It is enough to check for $D$ indecomposable. Let $D \in \pi^{-1} (\extn{\sS}_{\Cw})$ be indecomposable; note that $D \in F^k \Fw$ for some $k \in \bZ$. We proceed by induction on the $\sS$-length of $\pi(D)$ in $\Cw$.
If $D \in \pi^{-1} (\sS)$, then $D = F^k S$ for some $S \in \sS$. Hence, $D \in \{F^i \sS \mid i \in \bZ\} \subseteq \ES$. 
Suppose $n > 1$. Assume, by induction, that if $D \in \pi^{-1}(\extn{\sS}_{\Cw})$  is such that $\pi(D)$ has $\sS$-length $n-1$ in $\Cw$ then $D \in \ES$ has $\{F^i \sS \mid i \in \bZ\}$-length $n-1$ in $\ES$. 
Now suppose $D \in \pi^{-1}(\extn{\sS}_{\Cw})$ is such that $\pi (D)$ has $\sS$-length $n$ in $\Cw$. Then, by Lemma~\ref{lem:basic-properties}(2), there is a triangle in $\Cw$ of the form 
\[
\tri{C}{\pi(D)}{S},
\] 
with $S \in \sS$ and $C \in \extn{\sS}_{\Cw}$ has $\sS$-length $n-1$ in $\Cw$. This triangle is the image under $\pi$ of a triangle in $\Db(\kk Q)$ of the form 
\[
\tri{F^i D'}{D}{F^j S},
\]
where $i \in \{k, k-1\}$ and $j \in \{k, k+1\}$, by Lemma~\ref{lem:morphisms}. Since $\pi(F^i D') = C$, it follows by induction that $F^i D' \in \ES$ has $\{F^i \sS \mid i \in \bZ\}$-length $n-1$ in $\Db(\kk Q)$, whence $D \in \ES$ has $\{F^i \sS \mid i \in \bZ\}$-length $n$.
\end{proof}

\begin{lemma} \label{lem:ES-funct-finite}
Let $\pi \colon \Db(\kk Q) \to \Cw$ be the natural projection functor.
Suppose $\sS$ is an $\infty$-orthogonal collection of $\Db(\kk Q)$ contained in $\Fw$. Then $\ES$  is contravariantly (resp. covariantly, functorially) finite in $\Db(\kk Q)$ if and only if 
$\extn{\sS}_{\Cw}$ is contravariantly (resp. covariantly, functorially) finite in $\Cw$. 
\end{lemma}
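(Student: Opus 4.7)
The plan is to exploit two features simultaneously: the description of Hom-spaces in the orbit category,
\[
\Hom_{\Cw}(\pi X, \pi Y) = \bigoplus_{n \in \bZ} \Hom_{\Db(\kk Q)}(X, F^n Y),
\]
and the fact that $\ES = \extn{F^n \sS \mid n \in \bZ}$ is $F$-invariant (since $F \cdot F^n\sS = F^{n+1}\sS$ and extension closure is preserved by the autoequivalence $F$), so $F^n \ES = \ES$ for every $n \in \bZ$. Together with Lemma~\ref{lem:extension-closure}, which says $\pi(\ES) = \extn{\sS}_{\Cw}$, these tools reduce the statement to an explicit bookkeeping argument. I will treat the contravariantly finite statement in detail; the covariantly finite case is the evident dual, and the functorially finite case follows by combining the two.

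\textbf{Forward direction ($\sD \Rightarrow \Cw$).} Suppose $\ES$ is contravariantly finite in $\sD \coloneqq \Db(\kk Q)$. Let $X \in \extn{\sS}_{\Cw}$; by Lemma~\ref{lem:extension-closure} we may regard $X$ as an object of $\ES$. The claim is that if $e \colon E \to D$ is any right $\ES$-approximation in $\sD$, then $\pi(e) \colon \pi E \to \pi D$ is a right $\extn{\sS}_{\Cw}$-approximation in $\Cw$. Indeed, given $g = (g_n)_n \colon \pi Y \to \pi D$ with $Y \in \ES$ (finitely many $g_n \neq 0$), for each such $n$ the morphism $g_n \colon Y \to F^n D$ factors as $F^n(e) \circ h_n$ for some $h_n \colon Y \to F^n E$, because $F^n(e)$ is a right $F^n \ES = \ES$-approximation of $F^n D$. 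The collection $h = (h_n)_n$ defines a morphism $\pi Y \to \pi E$ in $\Cw$, and unpacking the composition law in the orbit category yields $\pi(e) \circ h = g$.

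\textbf{Reverse direction ($\Cw \Rightarrow \sD$).} Suppose $\extn{\sS}_{\Cw}$ is contravariantly finite in $\Cw$. Given $D \in \sD$, choose a right $\extn{\sS}_{\Cw}$-approximation $f \colon X \to \pi D$ of $\pi D$ with $X \in \ES$, and write $f = (f_n)_{n \in S}$ for a finite set $S \subseteq \bZ$ with $f_n \colon X \to F^n D$ nonzero. Using $F$-invariance and extension-closure of $\ES$, assemble
\[
E' \coloneqq \bigoplus_{n \in S} F^{-n} X \in \ES, \qquad \tilde f \colon E' \to D \text{ with components } F^{-n}(f_n) \colon F^{-n} X \to D.
\]
To show $\tilde f$ is a right $\ES$-approximation, take any $\alpha \colon E \to D$ with $E \in \ES$, lift it via $f$ to some $\beta = (\beta_n)_n \colon \pi E \to X$ in $\Cw$ with $f \circ \beta = \pi(\alpha)$, and read off the degree-$0$ component of the composition formula: $\alpha = \sum_{k \in S} F^{-k}(f_k) \circ \beta_{-k}$. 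The morphism $E \to E'$ with components $\beta_{-k}$ then factors $\alpha$ through $\tilde f$.

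\textbf{Main obstacle.} The only real subtlety is the composition bookkeeping in the orbit category, in particular keeping track of how the index of $F$ shifts when composing morphisms of the form $f_k \in \Hom_{\sD}(X, F^k D)$ with $\beta_n \in \Hom_{\sD}(E, F^n X)$. Once the formula $(f \circ \beta)_m = \sum_{n+k=m} F^n(f_k) \circ \beta_n$ is stated carefully and the $F$-invariance of $\ES$ is applied, both implications reduce to routine manipulations. The covariantly finite case is dual (use $\bigoplus_{n} F^n X$ as an $\ES$-envelope target), and functorial finiteness is the conjunction of the two equivalences.
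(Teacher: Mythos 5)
Your proof is correct, but it takes a genuinely different route from the paper's. The paper proves both directions by taking a \emph{minimal} right approximation (in $\sD$, resp.\ in $\Cw$), extending it to a triangle, invoking the triangulated Wakamatsu lemma to place the cone in the appropriate perpendicular category, and then transporting the resulting approximation triangle across $\pi$ using Lemma~\ref{lem:extension-closure} and the Hom formula. You instead bypass both Wakamatsu and triangle manipulations, working directly with the graded description $\Hom_{\Cw}(\pi X,\pi Y)=\bigoplus_n\Hom_\sD(X,F^nY)$ together with the observation that $\ES$ is $F$-invariant and closed under finite direct sums: in the forward direction each degree-$n$ component of a test morphism is factored through $F^n$ applied to a fixed $\ES$-precover, while in the reverse direction a finite direct sum $\bigoplus_{n\in S}F^{-n}X$ assembles an explicit $\ES$-precover of $D$ in $\sD$ out of the components of a $\Cw$-precover of $\pi D$. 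Your approach does not need minimal approximations, gives completely explicit precovers, and makes the role of $F$-invariance transparent; the cost is the index bookkeeping in the orbit-category composition formula, which you correctly pin down as the main subtlety. The paper's argument is shorter and structurally cleaner, but relies on more machinery (Wakamatsu, lifting of triangles along $\pi$). Both arguments share Lemma~\ref{lem:extension-closure} as the crucial input. One small presentational slip: in the forward direction the opening sentence introduces $X\in\extn{\sS}_{\Cw}$ which is never used again; the test object is $Y\in\ES$ introduced a line later.
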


\begin{proof}
We only establish the contravariantly finite statements; the covariantly finite statement is similar and the functorially finite statement follows from combining both.

Suppose $\ES$ is contravariantly finite in $\sD = \Db(\kk Q)$. For $D \in \Fw$, take a minimal right $\ES$-approximation of $D$ in $\sD$ and extend it to a triangle,
\[
\trilabel{E_D}{D}{X_D}{\alpha}.
\]
By the triangulated Wakamatsu lemma (see, e.g.\ \cite[Lemma 2.1]{Jorgensen}), $X_D \in (\cE_\sS)^\perp$. Applying $\pi$ to this triangle, we get the following triangle in $\Cw$:
\[
\trilabel{\pi(E_D)}{\pi(D)}{\pi(X_D)}{\pi(\alpha)}.
\] 
By Lemma~\ref{lem:extension-closure}, we have $\pi (E_D) \in \extn{\sS}_{\Cw}$. By construction of $\Cw$, see Section~\ref{sec:hereditary}, we have $\Hom_{\Cw} (\sS, \pi (X_D)) = \bigoplus_{i \in \bZ} \Hom_\sD (F^i \sS, X_D) = 0$ since $X_D \in (\cE_\sS)^\perp$. Therefore, $\pi(\alpha)$ is a right $\extn{\sS}_{\Cw}$-approximation of $D = \pi (D)$, and so $\extn{\sS}_{\Cw}$ is  contravariantly finite in $\Cw$. 

Conversely, suppose $\extn{\sS}_{\Cw}$ is contravariantly finite in $\Cw$. Let $D \in \sD$ and consider the extension of a minimal right $\extn{\sS}_{\Cw}$-approximation of $\pi(D)$ to a triangle in $\Cw$, 
\[
\trilabel{S_D}{\pi(D)}{Y_D}{},
\]
where $\Hom_{\Cw} (\sS, Y_D) = 0$ by the triangulated Wakamatsu lemma. Since the triangulated structure on $\Cw$ is induced by that of $\sD$, this triangle is the image under $\pi$ of a triangle,
\[
\trilabel{E_D}{D}{X_D}{f}
\]
in $\sD$. It follows from Lemma~\ref{lem:extension-closure} that $E_D \in \ES$. On the other hand, we have $0 = \Hom_{\Cw}(\sS, Y_D) = \bigoplus_{i \in \bZ} \Hom_\sD(F^i \sS, X_D)$. Hence, $\Hom_\sD (F^i \sS, X_D) = 0$, for all $i \in \bZ$, and so $X_D \in (\ES)\orth$. Therefore, the map $f$ is a right $\ES$-approximation of $D$, from which it follows that $\ES$ is contravariantly finite in $\sD$.
\end{proof}

We are almost ready to prove Proposition~\ref{prop:SMC-SMS-well-defined}. First, we set up a final piece of notation. For integers $m \leq n$ and an $\infty$-orthogonal collection $\sS$, we set
\[
\ES^{[m,n]} \coloneqq \extn{F^m \sS}_{\Db(\kk Q)} * \extn{F^{m+1} \sS}_{\Db(\kk Q)} * \cdots * \extn{F^n \sS}_{\Db(\kk Q)} \subseteq \ES.
\]

\begin{proof}[Proof of Proposition~\ref{prop:SMC-SMS-well-defined}]
Let $\sS$ be a simple-minded collection in $\sD = \Db(\kk Q)$ contained in $\cF_w$. By the dual of \cite[Proposition 3.6]{IJ}, $\pi (\sS)$ is a left $w$-Riedtmann configuration in $\Cw$. Hence, by Proposition~\ref{prop:sms-char}, it only remains to check that $\extn{\sS}_{\Cw}$ is contravariantly finite in $\Cw$. Hence, by Lemma~\ref{lem:ES-funct-finite}, it suffices to show that $\ES$ is contravariantly finite in $\sD$.

Let $D \in \sD$. There are only finitely many $i \in \bZ$ with $\Hom_{\sD} (F^i \sS,D) \neq 0$ as $\kk Q$ is hereditary. Let $m, n \in \bZ$ such that $m \leq n$ and $\Hom_{\sD} (F^{<m} \sS, D) = 0 = \Hom_{\sD} (F^{>n} \sS,D )$. By Corollary~\ref{cor:extn-of-smc-funct-finite}, $\extn{\sS}_{\sD}$ is contravariantly finite in $\Db(\kk Q)$. Hence, $\ES^{[m,n]}$ is contravariantly finite in $\Db(\kk Q)$ by \cite[Lemma 5.3(1)]{SZ}. Since any component of a nonzero morphism from an object of $\ES$ to $D$ must originate from a summand $E \in \ES^{[m, n]}$, we have that any right $\ES^{[m,n]}$-approximation of $D$ is also a right  $\ES$-approximation. Hence, $\ES$ is contravariantly finite in $\sD$.
\end{proof}

\subsection{The induced map $\pi$ is surjective}

In order to complete the proof of Theorem~\ref{thm:SMC-SMS-bijection}, we have to show that the map $\pi$
induced by the natural projection functor $\pi \colon \Db(\kk Q) \to \Cw$ in Proposition~\ref{prop:SMC-SMS-well-defined} is surjective. To see this, we need a lemma, which first requires some notation. For an $\infty$-orthogonal collection $\sS$ in $\Db(\kk Q)$ and an integer $n$, we define the following subcategories of $\ES$:
\begin{align*}
\ES^{\leq n} & \coloneqq \bigcup_{i \leq n} \extn{F^i \sS}_{\Db(\kk Q)} * \extn{F^{i+1} \sS}_{\Db(\kk Q)} * \cdots * \extn{F^n \sS}_{\Db(\kk Q)}; \text{ and,} \\
\ES^{\geq n}& \coloneqq \bigcup_{i \geq n} \extn{F^n \sS}_{\Db(\kk Q)} * \extn{F^{n+1} \sS}_{\Db(\kk Q)} * \cdots * \extn{F^i \sS}_{\Db(\kk Q)}.
\end{align*}
Similarly, we also set $\ES^{< n} = \ES^{\leq n-1}$ and $\ES^{> n} = \ES^{\geq n+1}$. 

\begin{lemma}\label{lem:extnS-contra-finite-in-E_S}
Let $\sS$ be an $\infty$-orthogonal collection in $\Db(\kk Q)$. Then
\begin{enumerate}
\item $\extn{\sS}_{\Db(\kk Q)}$ is contravariantly finite in $\cE_\sS^{\leq 0}$; and, 
\item $\extn{\sS}_{\Db(\kk Q)}$ is covariantly finite in $\cE_\sS^{\geq 0}$.
\end{enumerate}
\end{lemma}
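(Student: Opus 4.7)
The two statements are dual ((2) follows by reversing arrows and using the decomposition $\ES^{\geq 0} = \extn{\sS} * \ES^{\geq 1}$), so I concentrate on (1). The goal is to produce, for each $E \in \ES^{\leq 0}$, a right $\extn{\sS}$-approximation by combining a Hom-vanishing argument with the abelian structure of $\extn{\sS}$.

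First, I would establish $\Hom_\sD(\extn{\sS}, \ES^{\leq -1}) = 0$: the base case $\Hom_\sD(\sS, F^i \sS) = 0$ for $i \neq 0$ is provided by the orthogonality of $\{F^n \sS\}_{n \in \bZ}$ in Lemma~\ref{lem:smc-vanishing-conditions}(1), and a routine induction on filtration lengths of source and target, using long exact $\Hom$-sequences, extends it to the full statement. Using Lemma~\ref{lem:orderedE_S} to reorder the filtration of $E$, I would then obtain a distinguished triangle
\[
X \to E \to Y \to \Sigma X
\]
with $X \in \ES^{\leq -1}$ and $Y \in \extn{\sS}$. Applying $\Hom_\sD(T,-)$ for $T \in \extn{\sS}$ and invoking the Hom-vanishing gives $\Hom_\sD(T, E) \simeq \ker\bigl( \Hom_\sD(T, Y) \to \Ext^1_\sD(T, X) \bigr)$, which exhibits $\Hom_\sD(-, E)|_{\extn{\sS}}$ as a left-exact sub-functor of the representable functor $\Hom_{\extn{\sS}}(-, Y)$.

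I would then build the approximation by induction on the filtration length of $E$. When the final piece of the filtration lies in $\ES^{\leq -1}$, the inductive approximation $P \to E'$ of the preceding stage composes with $E' \to E$ to give an approximation of $E$, because any morphism $T \to E$ from $T \in \extn{\sS}$ factors through $E' \to E$ by Hom-vanishing. When the final piece lies in $\sS$, I would use that $\extn{\sS}$ is a length abelian category with simples $\sS$ to represent the kernel subfunctor above by a subobject $S_E \hookrightarrow Y$ in $\extn{\sS}$; since the composition $S_E \to Y \to \Sigma X$ is then zero, the triangle furnishes a lift $S_E \to E$, which is the required right $\extn{\sS}$-approximation. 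To verify this is an approximation, one checks that for any $f : T \to E$ with $T \in \extn{\sS}$, the composite $\bar f : T \to Y$ lies in $\Hom(T, S_E)$, lifts to $g : T \to S_E$, and the discrepancy $f - (S_E \to E)\circ g$ factors through $X$ and so vanishes by Hom-vanishing.

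The hard part is the last case, because the extension $E' \to E \to C$ arising in the induction typically does not split --- one can check that $\Ext^1_\sD(\sS, F^{-1} \sS)$ can be nonzero already for $w = 1$. One therefore cannot simply adjoin $C$ as a direct summand; instead the representability of subfunctors inside the abelian category $\extn{\sS}$ must be used, and care is needed to ensure that the representing subobject remains inside $\extn{\sS}$ rather than passing to a larger completion.
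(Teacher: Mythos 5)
Your reduction to the triangle $X \to E \to Y \to \Sigma X$ with $X \in \ES^{\leq -1}$, $Y \in \extn{\sS}$, and the vanishing $\Hom_{\Db(\kk Q)}(\extn{\sS},\ES^{\leq -1}) = 0$ are fine, and this is genuinely different from the paper's argument, which runs an induction on the $\sS$-length of the top piece $E^0$ using two octahedral diagrams, builds the approximation as an iterated extension, and keeps the cone in $\sS\orth$ throughout --- never invoking any abelian structure on $\extn{\sS}$. The gap is in your decisive step: you assert that the subfunctor $T \mapsto \ker\bigl(\Hom(T,Y) \to \Hom(T,\Sigma X)\bigr)$ of $\Hom(-,Y)|_{\extn{\sS}}$ is represented by a subobject $S_E \into Y$ ``because $\extn{\sS}$ is a length abelian category with simples $\sS$''. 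Two problems. First, the abelian structure on $\extn{\sS}$ (with conflations the triangles having all three terms in $\extn{\sS}$) is not available here: the lemma is stated for an arbitrary $\infty$-orthogonal collection, and in the intended application (the surjectivity half of Theorem~\ref{thm:SMC-SMS-bijection}, where $\sS$ is the lift of a $w$-simple-minded system) one does not yet know that $\sS$ is part of a simple-minded collection or heart; the paper only has the filtration and summand statements of Lemma~\ref{lem:basic-properties} at its disposal, and its proof is designed to use exactly those. Second, even granting abelianness, being a length category does not make subfunctors of representables representable by subobjects --- the radical subfunctor of $\Hom(-,s)$ for $s$ simple is the standard counterexample --- so a property specific to this subfunctor has to be proved; you flag that ``care is needed'' but do not supply it, and your ``hard part'' discussion (non-splitness of the extensions) is not the actual obstruction.

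The missing ingredient is that your subfunctor is closed under image factorisations, and this follows from the same Hom-vanishing: if $\delta \colon Y \to \Sigma X$ kills $f \colon T \to Y$ and $T \onto I \into Y$ is the image factorisation, the conflation $K \to T \to I$ shows that $\delta|_I$ is induced by an element of $\Hom(\Sigma K,\Sigma X) \simeq \Hom(K,X) = 0$, so $\delta|_I = 0$; closure under sums of subobjects follows similarly, and finite $\sS$-length of $Y$ (Lemma~\ref{lem:basic-properties}(2)) then yields a largest subobject $S_E \subseteq Y$ killed by $\delta$, which represents the subfunctor (injectivity of $\Hom(T,S_E) \to \Hom(T,Y)$ uses $\infty$-orthogonality) and lifts to the required approximation $S_E \to E$. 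With this argument, and a justification of the abelian structure on $\extn{\sS}$ (or a reformulation avoiding it), your route can be completed --- and it would then render your preliminary induction on filtration length unnecessary, since Lemma~\ref{lem:orderedE_S} already provides the decomposition in one step. As written, however, the key step is asserted rather than proved, which is a genuine gap; the paper's octahedral induction sidesteps both issues.
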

\begin{proof}
We only prove the first statement; the second statement is dual. Write $\sD = \Db(\kk Q)$. Let $D \in \ES^{\leq 0}$. 
First note that if $\Hom_{\sD} (\sS, D) = 0$, then $D$ admits a (minimal) right $\extn{\sS}_{\sD}$-approximation, namely $S_D = 0 \too D$, whose cone $X_D \simeq D \in \sS\orth$. Therefore, we may assume that $\Hom_{\sD} (\sS,D) \neq 0$. 

By Lemma~\ref{lem:orderedE_S}, $d$ admits a decomposition, 
\begin{equation} \label{decomposition}
\trilabels{E^{<0}}{D}{E^0}{\alpha}{\beta}{},
\end{equation} 
with $E^{<0} \in \ES^{<0}$ and $E^0 \in \extn{\sS}_{\sD}$. We shall construct a right $\extn{\sS}_{\sD}$-approximation of $D$ by induction on the $\sS$-length of $E^0$ in $\extn{\sS}_{\sD}$.

Suppose the $\sS$-length of $E^0$ is one, i.e.\ $E^0 \in \sS$, and suppose $\phi\colon S \too D$ is a nonzero map with $S \in \sS$. If $\beta \phi = 0$, then $\phi = 0$ because it factors through $E^{<0}$ and, by Lemma~\ref{lem:smc-vanishing-conditions}, $\Hom_{\sD} (\sS, E^{<0}) = 0$. Hence, $\beta \phi \neq 0$, and since $S, E^0 \in \sS$, $\beta \phi$ must be an isomorphism because $\sS$ is an orthogonal collection in $\sD$. Therefore, $\beta$ is a split epimorphism and $D \simeq E^0 \oplus E^{<0}$. It follows that $S \simeq E^0$ is the unique $S \in \sS$, up to isomorphism, such that $\Hom_{\sD}(S,D) \neq 0$. Hence, $E^0 \rightlabel{\begin{pmat} 1 \\ 0 \end{pmat}} D$ is a right $\extn{\sS}_{\sD}$-approximation of $D$, whose cone $X_D = E^{<0} \in \sS\orth$. 

Now suppose the $\sS$-length of $E^0$ is $n >1$, i.e.\ there is a triangle $\tri{S'}{E^0}{S}$ with $S \in \sS$, and $S' \in \extn{\sS}_{\sD}$ of $\sS$-length $n-1$. Combining this triangle with \eqref{decomposition}, we get the octahedral diagram: 
\[
\xymatrix@!R=8px{
                               & S' \ar@{=}[r] \ar[d]    & S' \ar[d]          & \\
D \ar[r] \ar@{=}[d] & E^0 \ar[r] \ar[d]            & \Sigma E^{<0} \ar[r] \ar[d] & \Sigma D \ar@{=}[d] \\
D \ar[r]                   & S \ar[r] \ar[d]     & \Sigma A \ar[r] \ar[d]  & \Sigma D \\
                               & \Sigma S' \ar@{=}[r]  & \Sigma S'.      &
}
\]
The right-hand vertical triangle gives a decomposition of $A$ in $\ES^{\leq 0}$, $\tri{E^{<0}}{A}{S'}$, in which the $\sS$-length of $S'$ is $n-1$. Hence, by induction, $A$ admits a right $\extn{\sS}_{\sD}$-approximation $S_A \too A$, whose cone $X_A$ lies in $\sS\orth$. Consider the following octahedral diagram: 
\begin{equation} \label{octahedron-2}
\xymatrix@!R=8px{
                               & \Sigma^{-1} S \ar@{=}[r] \ar[d]    & \Sigma^{-1} S \ar[d]          & \\
S_A \ar[r] \ar@{=}[d] & A \ar[r] \ar[d]            & X_A \ar[r] \ar[d] & \Sigma S_A \ar@{=}[d] \\
S_A \ar[r]^\alpha                   & D \ar[r] \ar[d]    & Y \ar[r] \ar[d]^{\beta}  & \Sigma S_A \\
                               & S \ar@{=}[r]  & S.    &
}
\end{equation}
If $Y \in \sS\orth$ then $\alpha$ is a right $\extn{\sS}_{\sD}$-approximation of $D$ with cone lying in $\sS\orth$, and we are done. So suppose $Y \not\in \sS^\perp$. 
For $S_1 \in \sS$, applying $\Hom_{\sD} (S_1, -)$ to the right-hand vertical triangle in \eqref{octahedron-2}, shows that the map $\Hom_{\sD} (S_1, \beta) \colon \Hom_{\sD} (S_1,Y) \into \Hom_{\sD} (S_1,S)$ is injective because $X_A \in \sS\orth$. Hence, if $S_1 \neq S$, then $\Hom_{\sD} (S_1, Y) = 0$. Since $Y \not\in \sS^\perp$, it follows that we must have $\Hom_{\sD} (S,Y) \neq 0$ and  $\Hom_{\sD} (S,\beta)$ is injective. Suppose $\gamma \in \Hom_{\sD} (S, Y)$ is nonzero. Then $\beta \gamma$ must be an isomorphism, which implies that $\beta$ is a split epimorphism, whence $Y \simeq S \oplus X_A$. 

Now, using the other split triangle, we get a new octahedral diagram:
\[
\xymatrix@!R=8px{
                               & S \ar@{=}[r] \ar[d]    & S \ar[d]          & \\
D \ar[r] \ar@{=}[d] & X_A \oplus S \ar[r] \ar[d]            & \Sigma S_A \ar[r] \ar[d] & \Sigma D \ar@{=}[d] \\
D \ar[r]                   & X_A \ar[r] \ar[d]    & \Sigma S_D \ar[r]_{-\Sigma g} \ar[d]  & \Sigma D\\
                               & \Sigma S \ar@{=}[r]  & \Sigma S.     &
}
\]
Clearly, $S_D \in \extn{\sS}_{\sD}$, and since $X_A \in \sS^\perp$, $g$ is a right $\extn{\sS}_{\sD}$-approximation of $D$ with cone in $\sS^\perp$. This finishes the proof.
\end{proof}

We are now ready to prove Theorem~\ref{thm:SMC-SMS-bijection}.

\begin{proof}[Proof of Theorem~\ref{thm:SMC-SMS-bijection}]
By Proposition~\ref{prop:SMC-SMS-well-defined}, the map $\pi$ is well defined and it is clearly injective since the projection functor $\pi$ gives a bijection between $\ind{\Fw}$ and $\ind{\Cw}$. It remains to show that $\pi$ induces a surjection. 

Let $\sS$ be a $w$-simple-minded system in $\Cw$. The fact that the lift of $\sS$ to $\sD = \Db(\kk Q)$, which will also be denoted by $\sS$, is an $\infty$-orthogonal collection in $\sD$ follows from the proof of \cite[Theorem 1.2]{IJ}, as this part of the proof does not require $Q$ to be Dynkin. Hence, it remains to show that $\thick{\sD}{\sS} = \sD$. 

First, we claim that $\extn{\sS}_{\sD}$ is functorially finite in $\ES$. Indeed, on the one hand we have that $\extn{\sS}_{\sD}$ is contravariantly finite in $\ES^{\leq 0}$ by Lemma~\ref{lem:extnS-contra-finite-in-E_S}. On the other hand, since $\ES = \ES^{\leq 0} * \ES^{>0}$ by Lemma~\ref{lem:orderedE_S}, and $\{F^n \sS \mid n \in \bZ\}$ is an orthogonal collection in $\sD$, we have that $\ES^{\leq 0}$ is contravariantly finite in $\ES$. Hence, by transitivity, $\extn{\sS}_{\sD}$ is contravariantly finite in $\ES$. Dually, using $\ES^{\geq 0}$,  
$\extn{\sS}_{\sD}$ is also covariantly finite in $\ES$, and therefore $\extn{\sS}_{\sD}$ is functorially finite in $\ES$. Since $\sS$ is a $w$-simple-minded system in $\Cw$, we have that $\extn{\sS}_{\Cw}$ is functorially finite in $\Cw$. It then follows by Lemma~\ref{lem:ES-funct-finite} and transitivity that $\extn{\sS}_{\sD}$ is functorially finite in $\sD$.  

We will now show that $\thick{\sD}{\sS} = \sD$. Since $\extn{\sS}_{\sD}$ is functorially finite in $\sD$, we have that $\extn{\sS}_{\sD}$ is contravariantly finite in $(\Sigma^{> 0} \sS)\orth$ and covariantly finite in ${}\orth (\Sigma^{<0} \sS)$. And since $\kk Q$ is hereditary, for any $D \in \sD = \Db(\kk Q)$, we have $\Hom_{\sD}(\Sigma^i D, \sS) = 0 = \Hom_{\sD}(\Sigma^i \sS, D)$ for $i \gg 0$. Therefore, by \cite[Proposition 3.2]{Jin2}, we have that $({}\orth (\Sigma^{<0} \sS),\cosusp \sS)$ is a t-structure in $\sD$.

Without loss of generality, we may assume that $0 \neq D \in \extn{\Sigma^{\leq -w-2} \mod{\kk Q}}_{\sD}$. As a consequence, $\Hom_{\sD}(D, \Sigma^{\geq 0} \sS) = 0$. Consider the decomposition triangle of $D$ with respect to the t-structure $({}\orth (\Sigma^{<0} \sS),\cosusp \sS)$:
\[
\tri{X_D}{D}{Y_D}.
\]
The morphism $X_D \to D$ is thus a minimal right ${}\orth (\Sigma^{<0} \sS)$-approximation, which is therefore zero if and only if $X_D \simeq 0$. Suppose $X_D \neq 0$, then the morphism $X_D \to D$ must be nonzero. Since $\kk Q$ is hereditary, this means that $X_D \in \extn{\Sigma^{\leq -w-2} \mod{\kk Q}}_{\sD}$ also, in which case, $\Hom_{\sD}(X_D, \Sigma^{\geq 0} \sS) = 0$. Hence $X_D \in {}\orth (\Sigma^{\bZ} \sS)$. Now, by the final part of the proof of \cite[Theorem 1.2]{IJ}, we can conclude that $X_D \simeq 0$. Hence $D \simeq Y_D \in \cosusp \sS \subseteq \thick{\sS}{\sD}$. It follows that $\thick{\sD}{\sS} = \sD$, as required.
\end{proof}

%============================================================================
% SECTION
\section{Sincere orthogonal collections} \label{sec:sincere}
%============================================================================

Let $Q$ be an acyclic quiver. In this section we establish a bijection between $w$-simple-minded systems in $\Cw(\kk Q)$ and sincere $\infty$-orthogonal collections of $\Db(\kk Q)$ sitting in some truncation of the fundamental domain of $\Cw(\kk Q)$; see Theorem~\ref{thm:w-sincere}. This result generalises \cite[Theorem 4.8]{CS12}, which established the same result in the case that $w=1$ and $Q$ is simply-laced Dynkin.
In Section~\ref{sec:noncrossing}, we will use Theorem~\ref{thm:w-sincere} to establish a bijection between $w$-simple-minded systems and positive $w$-noncrossing partitions in the Weyl group of the corresponding type. 
However, we believe that Theorem~\ref{thm:w-sincere} holds independent representation-theoretic interest.

Before proceeding, we require some background on exceptional sequences.

\subsection{Exceptional sequences}

The notion of exceptional sequence goes back to the Moscow school in the 1980s; see e.g.\ \cite{Rudakov}. Recall from \cite{Bondal} that an object $e$ in a triangulated category $\sD$ is called \emph{exceptional} if $\Hom_\sD(e, \Sigma^i e) = 0$ for all $i \neq 0$ and $\Hom_\sD(e,e) \simeq \kk$. An ordered collection of exceptional objects $\sE = (e_1,\ldots,e_r)$ of $\sD$ is called an \emph{exceptional sequence} if $\Hom_\sD(e_j, \Sigma^i e_k) = 0$ for all $i \in \bZ$ and $j > k$.
The exceptional sequence $\sE$ is called \emph{complete} if $\thick{\sD}{\sE} = \sD$.
If $Q$ is an acyclic quiver with $n$ vertices and $\sE = (E_1,\ldots,E_r)$ is an exceptional sequence in $\Db(\kk Q)$ then $\sE$ is complete if and only if $r = n$; see \cite{BRT,CB,Ringel}.

Exceptional sequences are often called `exceptional collections'. In this article we avoid this term to avoid ambiguity when considering orthogonal collections.

The following lemma gives a link between exceptional sequences and simple-minded collections in $\Db(\kk Q)$.

\begin{lemma} \label{lem:exceptional}
Let $Q$ be an acyclic quiver with $n$ vertices. If $\sS$ is a simple-minded collection in $\Db(\kk Q)$ then the objects of $\sS$ can be ordered into a (complete) exceptional sequence $\sE = (E_1,\ldots,E_n)$ in which the cohomological degrees of the $E_i$ 
are weakly decreasing.
\end{lemma}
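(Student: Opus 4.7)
The plan is to exploit the hereditary structure of $\kk Q$: every indecomposable object of $\sD = \Db(\kk Q)$ is of the form $\Sigma^d M$ for a unique integer $d$ and indecomposable $\kk Q$-module $M$, so I would write each $S \in \sS$ as $S = \Sigma^{d_S} M_S$ and order $\sS = \{S_1, \ldots, S_n\}$ so that $d_{S_1} \geq d_{S_2} \geq \cdots \geq d_{S_n}$, deferring the choice of within-degree ordering. For indices $j > k$ with $d_{S_j} < d_{S_k}$, I would verify the exceptional-sequence vanishing $\Hom_\sD(S_j, \Sigma^i S_k) = 0$ directly: by hereditary this Hom space equals $\Ext^{d_{S_k} - d_{S_j} + i}_{\kk Q}(M_{S_j}, M_{S_k})$, which vanishes unless the exponent lies in $\{0,1\}$, giving two critical values of $i$, both $\leq 0$. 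These are handled respectively by $\infty$-orthogonality (for $i < 0$) and by Hom-orthogonality of distinct elements of $\sS$ (for $i = 0$).

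The substance lies in the within-degree case $d_{S_j} = d_{S_k}$, where the only remaining constraint is $\Hom_\sD(S_j, \Sigma S_k) = \Ext^1_{\kk Q}(M_{S_j}, M_{S_k}) = 0$ for $j > k$. My approach is to observe that, for each fixed $d$ in the support of the cohomological degrees, the subfamily $\{M_S : S \in \sS,\, d_S = d\}$ is a semibrick in $\mod{\kk Q}$, since $\Hom_{\kk Q}(M_{S_j}, M_{S_k}) = \Hom_\sD(S_j, S_k) = \delta_{jk}\,\kk$. By the Ingalls--Thomas classification of wide subcategories of a hereditary module category, the extension closure of this semibrick in $\mod{\kk Q}$ is a wide subcategory equivalent to $\mod{\kk Q'_d}$ for some acyclic quiver $Q'_d$. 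The semibrick becomes the set of simples of $\mod{\kk Q'_d}$, and a topological sort of $Q'_d$ produces an exceptional ordering of those simples in $\mod{\kk Q'_d}$, hence in $\mod{\kk Q}$. I would break the within-degree ties using these orderings, and the same $\Ext$-computation as in the first paragraph then confirms the global exceptional condition.

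To conclude, each $S \in \sS$ is exceptional in $\sD$: since $M_S$ is a simple of $\mod{\kk Q'_{d_S}}$ with $Q'_{d_S}$ acyclic, its vertex carries no loops, so $\mathrm{End}_{\kk Q}(M_S) = \kk$ and $\Ext^1_{\kk Q}(M_S, M_S) = 0$; hereditary vanishing of $\Ext^{\geq 2}$ then yields $\Hom_\sD(S, \Sigma^i S) = 0$ for $i \neq 0$. Completeness ($|\sS| = n$) is standard: the classes of any simple-minded collection form a $\bZ$-basis of $K_0(\sD) \simeq \bZ^n$. The main obstacle in this approach is the wide subcategory step in the second paragraph, i.e.\ identifying the extension closure of the semibrick in $\mod{\kk Q}$ with $\mod{\kk Q'_d}$ for an acyclic $Q'_d$; this is the key input from Ingalls--Thomas, without which the combinatorial structure needed for the within-degree ordering would be much harder to access.
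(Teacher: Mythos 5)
Your cross-degree analysis in the first paragraph is correct, and the reduction of the problem to ordering each fixed-degree subfamily $\{M_S : d_S = d\}$ into an exceptional sequence in $\mod{\kk Q}$ is sound. The gap is precisely where you flag it, in the wide-subcategory step, but the issue is more serious than you suggest: what you cite is not what Ingalls--Thomas prove. The extension closure of an arbitrary semibrick in $\mod{\kk Q}$ is indeed a wide (exact abelian, extension-closed) subcategory in which the bricks become the simples, but it need \emph{not} be equivalent to $\mod{\kk Q'}$ for an acyclic quiver $Q'$. A single regular brick $R$ in a homogeneous tube of a tame hereditary algebra is a one-element semibrick whose extension closure is the tube itself, which has no nonzero projectives; and a semibrick $\{R_0, R_1\}$ consisting of two regular simples in a tube of rank $2$ has $\Ext^1(R_0, R_1) \neq 0 \neq \Ext^1(R_1, R_0)$, so its extension closure is a uniserial length category whose Ext-quiver is a $2$-cycle. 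Ingalls--Thomas classify \emph{finitely generated} wide subcategories, and it is exactly the finite-generation (equivalently, acyclicity of the Ext-quiver and absence of self-extensions) that you would need to establish. Your argument supplies no reason why the within-degree subfamily of a simple-minded collection should avoid these pathologies, and since showing that each $M_S$ is exceptional and that the within-degree Ext-quiver is acyclic is \emph{equivalent} to the ordering statement you are trying to prove, the appeal is circular.

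For comparison, the paper's proof is entirely indirect: it observes that $\Hom_{\leq 0}$-configurations in the sense of Buan--Reiten--Thomas (which are by definition complete exceptional sequences orderable with weakly decreasing degrees) form a subset of the set of simple-minded collections, and that both sets are in bijection with silting objects (by \cite[Theorem 2.4]{BRT} and by \cite{KY} respectively) via the \emph{same} map; equality of the two sets then follows formally. This sidesteps any within-degree analysis. If you want to salvage a direct approach, the missing input would be some argument forcing the relevant subfamily to generate a functorially finite wide subcategory — but that appears to require essentially the same silting-theoretic machinery the paper invokes.
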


\begin{proof}
By \cite[Lemma 2.3]{BRT}, the objects of a $\Hom_{\leq 0}$-configuration (for the definition, see \cite[\S 2.2]{BRT}) can be ordered into a complete exceptional sequence $\sE = (E_1,\ldots,E_n)$ in which the cohomological degrees of the $E_i$ are weakly decreasing, and therefore strongly generate $\sD = \Db(\kk Q)$. Hence the set of $\Hom_{\leq 0}$-configurations in $\sD$ is a subset of the set of simple-minded collections.
By \cite[Theorem 2.4]{BRT}, the set of $\Hom_{\leq 0}$-configurations in $\sD$ is in bijection with the set of silting objects in $\sD$. However, the map from the set of $\Hom_{\leq 0}$-configurations in $\sD$ to the set of silting objects in $\sD$ constructed in \cite{BRT} coincides with the map from simple-minded collections in $\sD$ to silting objects in $\sD$; see \cite[\S 5.6]{KY}. Hence the set of $\Hom_{\leq 0}$-configurations in $\sD$ coincides with the set of simple-minded collections in $\sD$, giving the lemma.
\end{proof}

\subsection{Sincere orthogonal collections}

Let $Q$ be an acyclic quiver and $w \geq 1$ be an integer. Write $\sH \coloneqq \mod{\kk Q}$ and consider the functor
\begin{equation} \label{cohomology}
H \colon \Db(\kk Q) \too \sH \text{ given by } X \longmapsto \bigoplus_{i \in \bZ} H^i(X),
\end{equation}
where $H^i(-)$ denotes the $i^{\rm th}$-cohomology of $X$ with respect to the standard t-structure $(\sX,\sY)$ in $\Db(\kk Q)$.

Recall that a module $M \in \sH$ is called \emph{sincere} if $\Hom_\sH(P,M) \neq 0$ for any projective module $P$. A set of modules $\sX \subseteq \sH$ will be called a \emph{sincere set of modules} if for each projective module $P$ there exists a module $X \in \sX$ such that $\Hom_\sH(P,X) \neq 0$. If $\sX$ is a finite set, then $\sX$ is a sincere set of modules if and only if $\bigoplus_{X \in \sX} X$ is a sincere module. 
Note that there are equivalent formulations of sincerity using injective modules.
Using sincerity, we make the following definition.

\begin{definition}
An $\infty$-orthogonal collection $\sS \subseteq \Db(\kk Q)$ will be called \emph{$w$-sincere} if $\sS \subseteq \sX \cap \Sigma^w \sY \subseteq \Fw$ and $\{H(S) \mid S \in \sS\}$ is a sincere set of modules in $\sH$.
It will be called \emph{exceptionally finite} if the objects of $\sS$ can be ordered into an exceptional sequence and $\extn{\sS}_{\Db(\kk Q)}$ is functorially finite in $\Db(\kk Q)$.
\end{definition}

Recall $\pi \colon \Db(\kk Q) \to \Cw$ is the canonical projection functor.

\begin{proposition} \label{prop:w-sincere}
Let $Q$ be an acyclic quiver and $w \geq 1$ an integer. Then there is a well-defined map
\begin{align*}
\Theta \colon 
\left\{ 
\parbox{5.25cm}{\centering
$w$-Riedtmann configurations in $\Cw$}
\right\} 
& \too 
\left\{ 
\parbox{4.75cm}{\centering
$w$-sincere $\infty$-orthogonal collections in $\Db(\kk Q)$}
\right\}, \\
\sS 
& \longmapsto
\pi^{-1}(\sS) \cap \sX \cap \Sigma^w \sY
\end{align*}
which restricts to a well-defined map
\[
\Theta \colon
\left\{ 
\parbox{3.25cm}{\centering
$w$-simple-minded systems in $\Cw$}
\right\} 
\too 
\left\{ 
\parbox{8cm}{\centering
exceptionally finite, $w$-sincere $\infty$-orthogonal collections in $\Db(\kk Q)$}
\right\}.
\]
\end{proposition}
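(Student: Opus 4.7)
The plan is to verify that $\Theta(\sS) = \pi^{-1}(\sS)\cap\sX\cap\Sigma^w\sY$ is an $\infty$-orthogonal collection with sincere cohomology for general $w$-Riedtmann $\sS$, and then upgrade to exceptionally finite under the $w$-simple-minded system hypothesis. Since $\sX\cap\Sigma^w\sY = \sH * \Sigma\sH * \cdots * \Sigma^{w-1}\sH$, and $\pi$ induces a bijection between indecomposable objects of $\Fw$ and of $\Cw$, each indecomposable $X\in\Theta(\sS)$ has the form $\Sigma^{i}M$ with $0\leq i\leq w-1$ and $M$ an indecomposable $\kk Q$-module.

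For $\infty$-orthogonality, the vanishing $\Hom_\sD(\Sigma^k X, Y) = 0$ for $k\geq w$ is immediate from the hereditary property of $\kk Q$, which forces the relevant $\Ext$ groups to have negative index. For $0\leq k\leq w-1$, I would apply Lemma~\ref{lem:morphisms}, which gives the decomposition $\Hom_{\Cw}(X,\Sigma^{-k}Y) = \Hom_\sD(X,\Sigma^{-k}Y)\oplus D\Hom_\sD(Y,\Sigma^{k-w}X)$; combining this with the $w$-orthogonality of $\sS$ in $\Cw$ (together with Serre duality $\SSS_{\Cw} = \Sigma^{-w}$) forces both summands to vanish when $X\neq Y$ or $k\neq 0$, and forces $\Hom_\sD(X,X) = \kk$, $\Hom_\sD(X,\Sigma^{-w}X) = 0$ in the $X = Y$, $k = 0$ case.

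For sincerity, given a projective $P\in\proj{\kk Q}$, the right $w$-Riedtmann condition applied to $\pi(P)\neq 0$ in $\Cw$ yields $T\in\sS$ and $0\leq k\leq w-1$ with $\Hom_{\Cw}(\pi(P),\Sigma^{-k}T)\neq 0$. Let $S'\in\Fw$ be the unique preimage of $T$ in the fundamental domain; since $S'$ is indecomposable, either $S' = \Sigma^{i_0}M$ with $0\leq i_0\leq w-1$ and $M$ an indecomposable module (so $S'\in\Theta(\sS)$), or $S' = \Sigma^w I$ for an indecomposable injective $I$. Applying Lemma~\ref{lem:morphisms} to $(P,S')\in\Fw\times\Fw$, the second summand $D\Hom_\sD(S',\Sigma^{k-w}P)$ vanishes by degree-counting in both cases. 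The first summand $\Hom_\sD(P,\Sigma^{-k}S')$ equals $\Ext^{i_0-k}(P,M) = \delta_{k,i_0}\Hom_\sH(P,M)$ in the $\Theta(\sS)$-case (since $P$ is projective, killing all higher $\Ext$), and $\Ext^{w-k}(P,I) = 0$ in the injective case. Hence the hypothesis forces $S'\in\Theta(\sS)$ with $k = i_0$ and yields $\Hom_\sH(P,H(S'))\neq 0$, proving sincerity.

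For the restriction to $w$-simple-minded systems, Theorem~\ref{thm:SMC-SMS-bijection} identifies the full lift $\tilde\sS\subseteq\Fw$ of $\sS$ with a simple-minded collection in $\Db(\kk Q)$. Since $\Theta(\sS)\subseteq\tilde\sS$, Lemma~\ref{lem:exceptional} orders $\tilde\sS$ into a complete exceptional sequence; restricting this ordering to the subset $\Theta(\sS)$ preserves the exceptional-sequence property. For functorial finiteness, Corollary~\ref{cor:extn-of-smc-funct-finite} gives that $\extn{\tilde\sS}_{\Db(\kk Q)}$ is functorially finite in $\Db(\kk Q)$, while Lemma~\ref{lem:basic-properties}(3), applied to the inclusion of orthogonal collections $\Theta(\sS)\subseteq\tilde\sS$, gives that $\extn{\Theta(\sS)}_{\Db(\kk Q)}$ is functorially finite in $\extn{\tilde\sS}_{\Db(\kk Q)}$; transitivity of functorial finiteness then concludes. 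The main obstacle I anticipate is the sincerity step, where the two-case analysis of preimages in $\Fw$ and the careful extraction of the correct cohomology degree via Lemma~\ref{lem:morphisms} form the central technical ingredient.
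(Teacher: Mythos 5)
Your proposal is correct and follows essentially the paper's own proof: the sincerity argument is the same degree count via Lemma~\ref{lem:morphisms} after splitting indecomposable lifts in $\Fw$ into those in $\sX \cap \Sigma^w \sY$ and those in $\Sigma^w \inj{\kk Q}$ (you phrase it directly, the paper by contradiction with the right $w$-Riedtmann condition), and exceptionality of $\Theta(\sS)$ comes, as in the paper, from Theorem~\ref{thm:SMC-SMS-bijection} together with Lemma~\ref{lem:exceptional}. The only minor variations are that you prove $\infty$-orthogonality directly from Lemma~\ref{lem:morphisms} and the hereditary degree count where the paper cites the proof of \cite[Theorem 1.2]{IJ}, and you get functorial finiteness of $\extn{\Theta(\sS)}$ in $\Db(\kk Q)$ via Corollary~\ref{cor:extn-of-smc-funct-finite} and Lemma~\ref{lem:basic-properties}(3) plus transitivity, whereas the paper argues in $\Cw$ using Proposition~\ref{prop:sms-char} and transfers back via Lemmas~\ref{lem:ES-funct-finite} and~\ref{lem:extnS-contra-finite-in-E_S}; both routes are valid.
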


\begin{proof}
Let $\sS$ be a $w$-Riedtmann configuration in $\Cw$. Partition the lift of $\sS$ into $\Fw$, $\pi^{-1}(\sS) \cap \Fw$, into $\sR \cup \sT$, where $\sR \subseteq \sX \cap \Sigma^w \sY$ and $\sT \subseteq \Sigma^w \sX \cap \SSS \Sigma^w \sY$. We need to show that $\sR$ is a $w$-sincere $\infty$-orthogonal configuration. As in the proof of Theorem~\ref{thm:SMC-SMS-bijection}, we can invoke \cite[Theorem 1.2]{IJ} to see that $\sR$ is $\infty$-orthogonal. By definition $\sR \subseteq \sX \cap \Sigma^w \sY$, so it only remains to show the sincerity part of the definition. 

Suppose, for a contradiction, that $\{H(R) \mid R \in \sR\}$ is not a sincere set. This means that there exists an indecomposable projective module $P$ such that $\Hom_{\sD}(P, \Sigma^{-i} R) = 0$ for each $R \in \sR$ and $0 \leq i \leq w-1$, where $\sD = \Db (\kk Q)$. 
Now, since $P \in \Fw$ and $\sR \subseteq \Fw$, by Lemma~\ref{lem:morphisms} for $0 \leq i \leq w-1$, we have
\begin{align*}
\Hom_{\Cw}(P,\Sigma^{-i}R) & \simeq \Hom_{\sD}(P,\Sigma^{-i} R) \oplus D \Hom_{\sD}(R, \Sigma^{i-w} P) \\
                                              & \simeq D \Hom_{\sD}(R, \Sigma^{i-w} P).
\end{align*}
Since $i - w < 0$ for each $i$, $\sR \subseteq \Fw$ and $P \in \sH$, the hereditary property means that $\Hom_{\sD}(R, \Sigma^{i-w} P) = 0$. Hence, $\Hom_{\Cw}(P,\Sigma^{-i}R) = 0$ for each $R \in \sR$ and each $0 \leq i \leq w-1$.

Now each $T \in \sT$ can be written as $T = \Sigma^w T'$ for some $T' \in \sH$. Again, using Lemma~\ref{lem:morphisms}, for $0 \leq i \leq w-1$, we have
\begin{align*}
\Hom_{\Cw}(P,\Sigma^{-i} T) & \simeq \Hom_{\sD}(P, \Sigma^{-i} T) \oplus D \Hom_{\sD}(T, \Sigma^{i-w} P) \\
                                             & \simeq \Hom_{\sD}(P, \Sigma^{w-i} T') \oplus D \Hom_{\sD}(T', \Sigma^{i-2w} P).
\end{align*}
Since $w - i \geq 1$ and $P$ is projective, we have $\Hom_{\sD}(P, \Sigma^{w-i} T') =0$. Since $i - 2w < 0$, and $\sH$ is hereditary, we have  $\Hom_{\sD}(T', \Sigma^{i-2w} P) = 0$. It follows that $\Hom_{\Cw}(P,\Sigma^{-i} T) = 0$ for each $0 \leq i \leq w-1$ and $T \in \sT$. Since $0 \neq P \in \Cw$, this contradicts the fact that $\sS$ is a $w$-Riedtmann configuration. Hence, $\{H(R) \mid R \in \sR\}$ must be a sincere set, as required.

Finally, to see the restriction, suppose further that $\sS$ is a $w$-simple-minded system in $\Cw$. Then $\extn{\sR}_{\Cw}$ is functorially finite in $\extn{\sS}_{\Cw}$ by Lemma~\ref{lem:basic-properties}$(3)$. Hence, by Proposition~\ref{prop:sms-char} and transitivity of functorial finiteness, $\extn{\sR}_{\Cw}$ is functorially finite in $\Cw$. By Lemmas~\ref{lem:ES-funct-finite} and \ref{lem:extnS-contra-finite-in-E_S}, we see that $\extn{\sR}_{\sD}$ is functorially finite in $\sD$.
By Theorem~\ref{thm:SMC-SMS-bijection} and Lemma~\ref{lem:exceptional}, the objects of $\sS$ and hence $\sR$ can be ordered into an exceptional sequence, making $\sR$ exceptionally finite, as required.
\end{proof}

Putting together Proposition~\ref{prop:w-sincere} with Theorem~\ref{thm:SMC-SMS-bijection} we obtain the following corollary.

\begin{corollary} \label{cor:w-sincere}
Let $Q$ be an acyclic quiver and $w \geq 1$ an integer. Then there is a well-defined map
\begin{align*}
\Theta \colon 
\left\{ 
\parbox{5.25cm}{\centering
simple-minded collections of $\Db(\kk Q)$ contained in $\Fw$}
\right\} 
& \too 
\left\{ 
\parbox{6.5cm}{\centering
exceptionally finite, $w$-sincere $\infty$-orthogonal collections in $\Db(\kk Q)$}
\right\}. \\
\sS 
& \longmapsto
\sS \cap \sX \cap \Sigma^w \sY
\end{align*}
\end{corollary}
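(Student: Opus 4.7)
The plan is to realise $\Theta$ as the composition of two maps already established in the paper. Given a simple-minded collection $\sS \subseteq \Fw$ in $\Db(\kk Q)$, Theorem~\ref{thm:SMC-SMS-bijection} produces the $w$-simple-minded system $\pi(\sS)$ in $\Cw$, and Proposition~\ref{prop:w-sincere} then produces the exceptionally finite, $w$-sincere $\infty$-orthogonal collection $\pi^{-1}(\pi(\sS)) \cap \sX \cap \Sigma^w \sY$ in $\Db(\kk Q)$. Hence this composite automatically lands in the stated codomain.

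The only thing to verify is that the composite coincides with the rule $\sS \mapsto \sS \cap \sX \cap \Sigma^w \sY$ in the statement, i.e.\ that
\[
\pi^{-1}(\pi(\sS)) \cap \sX \cap \Sigma^w \sY = \sS \cap \sX \cap \Sigma^w \sY.
\]
The inclusion from right to left is immediate. For the reverse, suppose $X$ is an indecomposable object of $\sX \cap \Sigma^w \sY$ with $\pi(X) \in \pi(\sS)$. Since $\sX \cap \Sigma^w \sY \subseteq \Fw$ and $\sS \subseteq \Fw$, there is an $S \in \sS \subseteq \Fw$ with $\pi(S) = \pi(X)$. Because $\pi$ restricts to a bijection on indecomposable objects of $\Fw$, we conclude $X = S \in \sS$. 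Decomposing into indecomposable summands extends this to arbitrary objects.

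No genuine obstacle arises; the content has already been absorbed into Theorem~\ref{thm:SMC-SMS-bijection} and Proposition~\ref{prop:w-sincere}. Intuitively, the intersection $\sS \cap \sX \cap \Sigma^w \sY$ simply discards those elements of $\sS$ lying in the boundary strip $\Sigma^w \inj{\kk Q}$ of the fundamental domain $\Fw$, which is precisely what the lift-then-truncate procedure of Proposition~\ref{prop:w-sincere} does when starting from $\pi(\sS)$.
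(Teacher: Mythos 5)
Your proof is correct and takes essentially the same route as the paper, which obtains the corollary precisely by composing Theorem~\ref{thm:SMC-SMS-bijection} with Proposition~\ref{prop:w-sincere}. The one detail you spell out --- that $\pi^{-1}(\pi(\sS)) \cap \sX \cap \Sigma^w \sY = \sS \cap \sX \cap \Sigma^w \sY$, using that $\pi$ is a bijection on indecomposables of $\Fw$ and $\sS \subseteq \Fw$ --- is exactly the identification left implicit in the paper's "putting together" statement.
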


We now aim to show that the map $\Theta$ defined in Corollary~\ref{cor:w-sincere} is a bijection. In order to establish this we need a special case of Jin's reduction of simple-minded collections \cite[Theorem 3.1]{Jin2}.

\subsection{Reduction of simple-minded collections}

In this section we present an explicit specialisation of \cite[Theorem 3.1]{Jin2}. The following statement is related to \cite[Theorem 3.1]{Jin2} in a way analogous to that Aihara-Iyama's silting reduction theorem \cite[Theorem 2.37]{AI} is related to Iyama-Yang's silting reduction theorem \cite[Theorem 3.7]{IYa}. 
We include a proof for the convenience of the reader.  

\begin{proposition}[Specialisation of {\cite[Theorem 3.1]{Jin2}}] \label{prop:reduction}
Let $\sD$ be a Hom-finite, Krull-Schmidt, $\kk$-linear triangulated category. Suppose $\sT$ is an $\infty$-orthogonal collection such that $\thick{\sD}{\sT}$ is functorially finite in $\sD$. Then there is a bijection
\[
\Phi \colon \left\{
\parbox{5cm}{\centering
simple-minded collections in $\sD$ containing $\sT$}
\right\}
\rightlabel{1-1}
\left\{
\parbox{5cm}{\centering
simple-minded collections in $(\Sigma^{\bZ} \sT)\orth$}
\right\}.
\]
\end{proposition}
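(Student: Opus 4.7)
The plan is to construct the bijection explicitly using the approximation triangles afforded by the functorial finiteness of $\thick{\sD}{\sT}$. First, I would observe that $(\Sigma^{\bZ}\sT)\orth = \thick{\sD}{\sT}\orth$, since $\thick{\sD}{\sT}$ is closed under shifts; write $\sE$ for this perpendicular subcategory. The contravariant finiteness of $\thick{\sD}{\sT}$ then yields, for each $d \in \sD$, an approximation triangle
\[
t_d \too d \too \rho(d) \too \Sigma t_d
\]
with $t_d \in \thick{\sD}{\sT}$ and $\rho(d) \in \sE$; this produces a functor $\rho \colon \sD \too \sE$ that is left adjoint to the inclusion. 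The forward map would then be $\Phi(\sS) \coloneqq \{\rho(S) \mid S \in \sS \setminus \sT\}$.

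The first main task is to verify that $\Phi(\sS)$ is a simple-minded collection in $\sE$. For $\infty$-orthogonality, applying $\Hom(-,\rho(S_2))$ to the approximation triangle for $S_1$ (with $S_1, S_2 \in \sS \setminus \sT$) and using $\rho(S_2) \in \thick{\sD}{\sT}\orth$ produces an isomorphism $\Hom(\Sigma^k\rho(S_1),\rho(S_2)) \simeq \Hom(\Sigma^k S_1, \rho(S_2))$; then applying $\Hom(\Sigma^k S_1, -)$ to the triangle for $S_2$ reduces this to $\Hom(\Sigma^k S_1, S_2)$ modulo terms involving $t_{S_2}$. Since $t_{S_2} \in \thick{\sD}{\sT}$ is an iterated extension of shifts of $\sT \subseteq \sS$, the $\infty$-orthogonality of $\sS$ eliminates the remaining contributions, giving both the brick property and the required Hom-vanishing. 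Generation of $\sE$ as a thick subcategory then follows from $\thick{\sD}{\sS} = \sD$ together with the fact that $\rho$ descends to a triangulated equivalence $\sD/\thick{\sD}{\sT} \simeq \sE$, which is standard under our hypotheses.

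The inverse is best constructed via the K\"onig--Yang correspondence between simple-minded collections and bounded t-structures with length heart. An SMC $\sS'$ in $\sE$ corresponds to such a t-structure on $\sE$, which can be glued with the natural bounded t-structure on $\thick{\sD}{\sT}$ (whose simples are precisely $\sT$) through the recollement-like structure induced by the functorial finiteness of $\thick{\sD}{\sT}$. This gluing produces a bounded t-structure on $\sD$ whose heart is a length abelian category with simples consisting of $\sT$ together with canonical lifts of the simples of the heart on $\sE$; these simples form the SMC $\Psi(\sS')$. The identities $\Phi \circ \Psi = \id$ and $\Psi \circ \Phi = \id$ would then be verified by tracking the heart structure through both maps, using that the Serre quotient of the heart of $\Psi(\sS')$ by the Serre subcategory generated by $\sT$ recovers the heart associated to $\sS'$ via $\rho$.

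The main obstacle lies in the inverse direction: the naive attempt $\Psi(\sS') \coloneqq \sT \cup \sS'$, viewing $\sS' \subseteq \sE \subseteq \sD$, fails because the vanishing $\Hom_{\sD}(\Sigma^{>0}S', T) = 0$ for $S' \in \sS'$ and $T \in \sT$ is \emph{not} automatic from membership $\sS' \subseteq \sE$, which only provides vanishing in the opposite direction. The correct lifts must instead come from the gluing construction, which effectively replaces each $S' \in \sS'$ with a canonical extension by objects of $\thick{\sD}{\sT}$. Verifying that this gluing produces a bounded t-structure whose heart is a length category---so that K\"onig--Yang delivers the required SMC containing $\sT$---will be the principal technical step.
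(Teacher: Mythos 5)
Your forward map is the same as the paper's, but your verification that $\Phi(\sS)$ is $\infty$-orthogonal has a genuine gap. You argue that, because $t_{S_2}\in\thick{\sD}{\sT}$ is an iterated extension of shifts of objects of $\sT\subseteq\sS$, the $\infty$-orthogonality of $\sS$ kills all the terms involving $t_{S_2}$ in the long exact sequences. But $\infty$-orthogonality only gives $\Hom_\sD(\Sigma^k S_1,T)=0$ for $k\geq 1$; it gives no control over $\Hom_\sD(\Sigma^k S_1,\Sigma^j T)\simeq\Hom_\sD(\Sigma^{k-j}S_1,T)$ when $j\geq k$. So if the approximation object $t_{S_2}$ involved \emph{positive} shifts of $\sT$, the terms $\Hom_\sD(\Sigma^k S_1,\Sigma t_{S_2})$ (and the analogous terms needed for the brick property at $k=0$) need not vanish, and the comparison of $\Hom(\Sigma^k\rho(S_1),\rho(S_2))$ with $\Hom(\Sigma^k S_1,S_2)$ breaks down. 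The missing step --- and the key extra observation in the paper's proof --- is that the minimal right $\thick{\sD}{\sT}$-approximation $t_s$ of $s\in\sS$ lies in $\cosusp\Sigma^{-1}\sT$, i.e.\ is built only from strictly negative shifts of $\sT$; this is obtained by truncating $t_s$ with respect to the bounded t-structure $(\susp\sT,\cosusp\Sigma^{-1}\sT)$ of $\thick{\sD}{\sT}$ and using minimality together with $\Hom(\susp\sT,s)=0$ for $s\in\sS\setminus\sT$. Only after this refinement does your Hom-chasing argument close up.

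For the inverse, you correctly identify that the naive $\sT\cup\sS'$ fails, but the route you propose is not justified at this level of generality: the K\"onig--Yang correspondence between simple-minded collections and bounded t-structures with length heart is a theorem about $\Db(A)$ for a finite-dimensional algebra (more generally, suitable non-positive dg algebras), not about an arbitrary Hom-finite, Krull--Schmidt, $\kk$-linear triangulated category as in the statement; and even granting the recollement of $\sD$ by $\thick{\sD}{\sT}$ and $\sE=(\Sigma^{\bZ}\sT)\orth$ coming from functorial finiteness, the identification of the simples of the glued heart as ``$\sT$ together with canonical lifts of $\sS'$'' is precisely the technical content you defer. The paper's construction is direct and avoids all of this machinery: covariant finiteness of $\cosusp\sT$ in $\sD$ (a consequence of the hypotheses) yields a t-structure $({}\orth(\Sigma^{\leq 0}\sT),\cosusp\sT)$, and the lift of $r\in\sS'$ is the truncation $s_r$ in the triangle $\Sigma^{-1}t_r\to s_r\to r\to t_r$; orthogonality and generation are then checked directly, and applying $\Hom(-,t)$ and $\Hom(t,-)$ to the two truncation triangles shows $\Phi$ and $\Psi$ are mutually inverse. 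I would recommend replacing the K\"onig--Yang/gluing sketch by this explicit truncation, and adding the $\cosusp\Sigma^{-1}\sT$ step to the forward direction; as it stands the proposal does not constitute a proof.
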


In Appendix~\ref{sec:reduction} we include another proof which does not rely on the functorial finiteness of the subcategory generated by $\sT$ or use Verdier localisation.

\begin{proof}
The following argument is analogous to \cite[Theorem 2.37]{AI}.

Let $\sS$ be a simple-minded collection in $\sD$ containing $\sT$. For each $s \in \sS$, consider the truncation triangle coming from the stable t-structure $(\thick{\sD}{\sT}, (\Sigma^{\bZ} \sT)\orth)$,
\begin{equation} \label{phi}
\tri{t_s}{s}{Ls},
\end{equation}
where $L \colon \sD \to (\Sigma^{\bZ} \sT)\orth$ is the left adjoint to the inclusion $(\Sigma^{\bZ} \sT)\orth \into \sD$.
Define $\Phi(\sS) \coloneqq \sR = \{ Ls \mid s \in \sS\}$. Note that $Ls \neq 0$ if and only if $s \in \sS \setminus \sT$. By truncating $t_s$ with respect to the bounded t-structure $(\susp \sT,\cosusp \Sigma^{-1}\sT)$ in $\thick{\sD}{\sT}$, observe that the left $(\cosusp \Sigma^{-1}\sT)$-approximation of $t_s$ in $\thick{\sD}{\sT}$ also gives rise to a right $(\thick{\sD}{\sT})$-approximation of $s$, which shows that $t_s \in \cosusp \Sigma^{-1} \sT$. Take $s_1,s_2 \in \sS$ and consider the corresponding truncation triangle $\tri{t_i}{s_i}{Ls_i}$ for $i = 1,2$. Applying the functor $\Hom_\sD(-,Ls_2)$ to the truncation triangle for $s_1$ and the functor $\Hom_\sD(s_1,-)$ to the truncation triangle for $s_2$ shows that $\sR$ is an $\infty$-orthogonal collection in $(\Sigma^{\bZ} \sT)\orth$; here it is important to use the fact that $t_i \in \cosusp \Sigma^{-1} \sT$.

Suppose $d \in (\Sigma^{\bZ} \sT)\orth$. Since, $\sS$ is a simple-minded collection in $\sD$ there are integers $n \geq m$ such that $d \in \Sigma^n \extn{\sS}_\sD * \cdots * \Sigma^m \extn{\sS}_\sD$. Applying the functor $L$ to the corresponding tower for $d$ shows that $d \in \Sigma^n \extn{\sR}_{(\Sigma^{\bZ}\sT)\orth} * \cdots * \Sigma^m \extn{\sR}_{(\Sigma^{\bZ}\sT)\orth}$. Hence $(\Sigma^{\bZ}\sT)\orth = \thick{(\Sigma^{\bZ}\sT)\orth}{\sR}$ and $\sR$ is a simple-minded collection in $(\Sigma^{\bZ}\sT)\orth$. This shows that the map $\Phi$ is well defined.

We now construct a map
\[
\Psi \colon \left\{
\parbox{5cm}{\centering
simple-minded collections in $(\Sigma^{\bZ} \sT)\orth$}
\right\}
\too
\left\{
\parbox{5cm}{\centering
simple-minded collections in $\sD$ containing $\sT$}
\right\}.
\]
First note that $\cosusp \sT$ is covariantly finite in $\sD$ since $\cosusp \sT$ is covariantly finite in $\thick{\sD}{\sT}$, which is in turn functorially finite in $\sD$. Hence, there is an (unbounded) t-structure $({}\orth (\Sigma^{\leq 0} \sT),\cosusp \sT)$ in $\sD$. Let $\sR$ be a simple-minded collection in $(\Sigma^{\bZ} \sT)\orth$. For each $r \in \sR$ take the truncation triangle with respect to the t-structure $({}\orth (\Sigma^{\leq 0} \sT),\cosusp \sT)$,
\begin{equation} \label{psi}
\Sigma^{-1} t_r \to s_r \to r \to t_r,
\end{equation}
and set $\Psi(\sR) = \sT \cup \{s_r \mid r \in \sR\}$. In a manner analogous to the argument above, one can show that $\Psi(\sR)$ is an $\infty$-orthogonal collection in $\sD$. For generation, take $d \in \sD$ and truncate with respect to the stable t-structure $(\thick{\sD}{\sT}, (\Sigma^{\bZ} \sT)\orth)$,
\[
\tri{t_d}{d}{Ld},
\]
and observe that $Ld \in \thick{(\Sigma^{\bZ}\sT)\orth}{\sR} = \thick{\sD}{\sR}$. It follows immediately that $d \in \thick{\sD}{\Psi(\sR)}$, whence $\Phi(\sR)$ is a simple-minded collection in $\sD$, showing that $\Psi$ is well defined.

Finally, to see that $\Phi$ and $\Psi$ are mutually inverse, applying $\Hom_\sD(-,t)$ for $t \in \cosusp \sT$ to \eqref{phi} reveals that the morphism $Ls \to \Sigma t$ is a left $(\cosusp \sT)$-approximation of $Ls$, and applying $\Hom_\sD(t,-)$ for $t \in \thick{\sD}{\sT}$ to \eqref{psi} reveals that $\Sigma^{-1} t_r \to s_r$ is a right $(\thick{\sD}{\sT})$-approximation of $s_r$.
\end{proof}

\subsection{Bijectivity of $\Theta$}

To establish the bijectivity of the map occurring in Corollary~\ref{cor:w-sincere} we require a notion of perpendicular category for abelian categories which is compatible with the one for derived categories. We recall the following from \cite{GL,Schofield}. 

\begin{definition} \label{def:perp}
Let $\sH$ be an abelian category and $\sE$ a collection of objects in $\sH$. We define the \emph{right perpendicular category} of $\sE$ by
\[
\sE\orthH \coloneqq \{ X \in \sH \mid \Hom_\sH(\sE,X) = 0 = \Ext^1_\sH(\sE,X) \}.
\]
There is an analogous definition of \emph{left perpendicular category}, ${}\orthH \sE$.
\end{definition}

\begin{lemma} \label{lem:perp}
Let $Q$ be an acyclic quiver with $n$ vertices, $\sH = \mod{\kk Q}$ and $\sE = \{E_1,\ldots,E_k\}$ be a collection of exceptional $\kk Q$-modules for $k \leq n$. Then $(\Sigma^{\bZ} \sE)\orth$ is equivalent to $\Db(\kk Q')$, where $Q'$ is an acyclic quiver with $n - k$ vertices such that $\sE\orthH \simeq \mod{\kk Q'}$.
Moreover, $\sH \cap (\Sigma^{\bZ} \sE)\orth = \sE\orthH$.
\end{lemma}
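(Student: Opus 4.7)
My plan is to reduce everything to the classical perpendicular calculus for exceptional sequences in hereditary abelian categories, due to Schofield \cite{Schofield} and Geigle--Lenzing \cite{GL}, and then to lift the resulting module-level description to derived categories using the hereditary property of $\kk Q$. First, I would note that since each $E_i$ is exceptional and (in the applications of interest) the collection $\sE$ can be ordered into an exceptional sequence in $\sH = \mod{\kk Q}$, Schofield's perpendicular theorem furnishes an acyclic quiver $Q'$ with $n - k$ vertices together with an equivalence $\sE\orthH \simeq \mod{\kk Q'}$. Moreover, the inclusion $\sE\orthH \hookrightarrow \sH$ identifies $\sE\orthH$ with a wide (hence itself hereditary) subcategory of $\sH$.

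The core step is then the passage to the bounded derived category. Since $\kk Q$ is hereditary, every object $X \in \Db(\kk Q)$ splits as $X \simeq \bigoplus_i \Sigma^{-i} H^i(X)$, and for $M, N \in \sH$ the space $\Hom_{\Db(\kk Q)}(M, \Sigma^j N)$ vanishes unless $j \in \{0, 1\}$, in which case it is $\Hom_\sH(M,N)$ or $\Ext^1_\sH(M,N)$ respectively. Testing $X$ against each $\Sigma^m E$ for $E \in \sE$ and $m \in \bZ$ therefore shows that $X \in (\Sigma^{\bZ} \sE)\orth$ if and only if every cohomology module $H^i(X)$ lies in $\sE\orthH$. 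Combined with the wideness of $\sE\orthH$ in $\sH$, this identifies $(\Sigma^{\bZ} \sE)\orth$ with the essential image of the natural fully faithful functor $\Db(\sE\orthH) \hookrightarrow \Db(\kk Q)$, and hence with $\Db(\kk Q')$ via the Schofield equivalence.

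The ``moreover'' statement then falls out as a special case: for $X \in \sH$, the same $\Hom$-computation with $m \in \{0, -1\}$ shows that $X \in (\Sigma^{\bZ} \sE)\orth$ is equivalent to the pair of conditions $\Hom_\sH(\sE, X) = 0$ and $\Ext^1_\sH(\sE, X) = 0$, which by definition cuts out $\sE\orthH$. I expect the main technical point to be the verification that $\Db(\sE\orthH) \hookrightarrow \Db(\kk Q)$ is fully faithful with essential image precisely the complexes with cohomology in $\sE\orthH$; this uses both the hereditary property of $\kk Q$ (so that no higher $\Ext$'s obstruct the splitting into cohomology) and the wideness of $\sE\orthH$ inside $\sH$ (so that short exact sequences in $\sE\orthH$ computed in either ambient category agree), both of which are guaranteed by the Schofield/Geigle--Lenzing theorem.
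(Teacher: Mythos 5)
Your proposal is correct and follows essentially the same route as the paper: the Hom-computation in $\Db(\kk Q)$ using the hereditary property to identify $\sH \cap (\Sigma^{\bZ}\sE)\orth$ with $\sE\orthH$, Schofield's perpendicular theorem for the abelian equivalence $\sE\orthH \simeq \mod{\kk Q'}$, and the splitting of objects of $\Db(\kk Q)$ into their cohomology to lift this to a triangle equivalence $(\Sigma^{\bZ}\sE)\orth \simeq \Db(\kk Q')$. The only cosmetic difference is that you phrase the last step via the fully faithful functor $\Db(\sE\orthH) \into \Db(\kk Q)$, whereas the paper transports the equivalence directly along the cohomology decomposition.
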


\begin{proof}
Let $\sH = \mod{\kk Q}$ and $\sD = \Db (\kk Q)$. For $X \in \sH$, we have $\Hom_{\sD}(\Sigma^i \sE,X) = 0$ for all $i \in \bZ \setminus \{-1,0\}$ since $\sE \subseteq \sH$ and $\sH$ is hereditary.
Therefore, for $X \in \sH$ we have $X \in (\Sigma^{\bZ} \sE)\orth$ if and only if $X \in \sE\orthH$.
In particular, it follows that $\Sigma^i \sH \cap  (\Sigma^{\bZ} \sE)\orth = \Sigma^i  \sE\orthH$ for each $i \in \bZ$.
Now, by \cite[Theorem 2.5]{Schofield}, there is an equivalence of (abelian) categories $\sE\orthH \to \mod{\kk Q'}$. Since $\sH$ is hereditary, each object in $\sD$ decomposes into a direct sum of its cohomology, and thus this equivalence induces an equivalence of triangulated categories $(\Sigma^{\bZ} \sE)\orth \to \Db(\kk Q')$.
\end{proof}

\begin{theorem} \label{thm:w-sincere}
Let $Q$ be an acyclic quiver with $n$ vertices and $w \geq 1$ an integer. Then there is a bijection
\begin{align*}
\Theta \colon 
\left\{ 
\parbox{5.25cm}{\centering
simple-minded collections of $\Db(\kk Q)$ contained in $\Fw$}
\right\} 
& \rightlabel{1-1} 
\left\{ 
\parbox{6.7cm}{\centering
exceptionally finite, $w$-sincere $\infty$-orthogonal collections in $\Db(\kk Q)$}
\right\}. \\
\sS 
& \longmapsto
\sS \cap \sX \cap \Sigma^w \sY
\end{align*}
\end{theorem}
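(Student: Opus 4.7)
The proof rests on Jin's reduction theorem (Proposition~\ref{prop:reduction}) combined with a structural analysis of the perpendicular category of an exceptional sequence. Well-definedness of $\Theta$ is already established in Corollary~\ref{cor:w-sincere}, so the remaining task is to construct an inverse; by Theorem~\ref{thm:SMC-SMS-bijection} this amounts to showing that every exceptionally finite, $w$-sincere $\infty$-orthogonal collection $\sR$ in $\Db(\kk Q)$ extends uniquely to a simple-minded collection $\sS = \sR \sqcup \sT$ of $\Db(\kk Q)$ with $\sT \subseteq \Sigma^w \inj{\kk Q}$.

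For the existence part, I would first observe that $\sR$ is itself a simple-minded collection in $\thick{\sD}{\sR}$: since $\sR$ can be ordered as an exceptional sequence, $\thick{\sD}{\sR} = \bigcup_{m \leq n} \Sigma^n \extn{\sR}_{\sD} * \cdots * \Sigma^m \extn{\sR}_{\sD}$, and $\infty$-orthogonality supplies the remaining vanishing conditions. Moreover, since $\kk Q$ is hereditary and $\sR$ is exceptional, $\thick{\sD}{\sR}$ is admissible in $\sD = \Db(\kk Q)$, and functorial finiteness in $\sD$ will follow from the exceptional finiteness of $\sR$ by assembling approximations across the finitely many layers $\Sigma^i \extn{\sR}_{\sD}$ which can admit nonzero morphisms to a given object. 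Proposition~\ref{prop:reduction} then produces a bijection between simple-minded collections of $\sD$ containing $\sR$ and simple-minded collections of $(\Sigma^{\bZ} \sR)\orth$.

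The next step is to generalise Lemma~\ref{lem:perp} to the present setting, identifying $(\Sigma^{\bZ} \sR)\orth \simeq \Db(\kk Q')$ for some acyclic quiver $Q'$ with $n - |\sR|$ vertices; the key input is that the right perpendicular category of an exceptional sequence in the derived category of a hereditary algebra is again derived-equivalent to the derived category of a hereditary algebra. Under this equivalence, I would identify the unique simple-minded collection of $\Db(\kk Q')$ whose image back in $\sD$, via the inverse direction of Jin's bijection, lies inside $\Sigma^w \inj{\kk Q}$: the natural candidate is the transport of $\Sigma^w$ applied to the simple $\kk Q'$-modules across the equivalence. The $w$-sincerity hypothesis on $\sR$ is precisely what ensures that the truncation triangles underlying the Jin construction, taken with respect to the t-structure $({}\orth(\Sigma^{\leq 0} \sR), \cosusp \sR)$, land in $\Sigma^w \inj{\kk Q}$ rather than in some lower slice of $\Fw$. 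Injectivity is immediate from the same analysis: the constraint $\sT \subseteq \Sigma^w \inj{\kk Q}$ pins down the corresponding simple-minded collection of $\Db(\kk Q')$ uniquely.

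The main obstacle will be this final identification step: controlling how simple-minded collections of the perpendicular category $(\Sigma^{\bZ} \sR)\orth$ sit inside $\Db(\kk Q)$ relative to the fundamental domain, and precisely translating the combinatorial condition of $w$-sincerity into the geometric condition that $\sT \subseteq \Sigma^w \inj{\kk Q}$. This requires a careful bookkeeping of how the standard t-structure of $\Db(\kk Q')$ interacts with the embedding $(\Sigma^{\bZ} \sR)\orth \hookrightarrow \Db(\kk Q)$, and is the point at which the hereditary hypothesis, the structure of injective $\kk Q$-modules, and the sincerity condition are all simultaneously in play.
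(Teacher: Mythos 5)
Your proposal follows essentially the same route as the paper: well-definedness from Corollary~\ref{cor:w-sincere}, functorial finiteness of $\thick{\sD}{\sR}$ (the paper cites Bondal's admissibility theorem after passing to the underlying exceptional modules $H(\sR)$), Jin's reduction via Proposition~\ref{prop:reduction}, the identification $(\Sigma^{\bZ}\sR)\orth \simeq \Db(\kk Q')$ via Lemma~\ref{lem:perp}, a shift of the simple $\kk Q'$-modules as the complementary part, and uniqueness via the exceptional-sequence structure (the paper invokes Ringel's theorem that the only orthogonal complete exceptional sequence in $\mod{\kk Q'}$ is the set of simples). The one small imprecision is your account of the role of $w$-sincerity: the triviality of Jin's truncation triangles for the complementary objects is forced already by the hereditary property and the cohomological positions involved, whereas $w$-sincerity is needed to rule out that any complementary object is a shifted \emph{injective} $\kk Q$-module, which would push it outside $\Fw$.
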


\begin{proof}
The strategy of the proof is based on \cite[\S 4]{CS12}. The map $\Theta$ is well defined by Corollary~\ref{cor:w-sincere}. We first show that $\Theta$ is surjective.

Let $\sT=\{T_1,\ldots,T_k\}$ be an exceptionally finite, $w$-sincere $\infty$-orthogonal collection in $\sD = \Db(\kk Q)$. Since the objects of $\sT$ can be ordered into an exceptional sequence we have $k \leq n$.
To see that $\Theta$ is surjective, we need to find a simple-minded collection $\sS$ such that $\sT \subseteq \sS$ and $\sS \setminus \sT \subseteq \Sigma^{w+1} (\sH \setminus \inj{\sH})$. 
Let $\sE = H(\sT)$, where $H$ is the standard cohomology functor; see \eqref{cohomology}. By \cite[Theorem 3.2]{Bondal}, the subcategory $\thick{\sD}{\sT} = \thick{\sD}{\sE}$ is functorially finite in $\sD$.
Hence, by Proposition~\ref{prop:reduction}, we are looking for simple-minded collections $\sR$ in $(\Sigma^{\bZ} \sT)\orth = (\Sigma^{\bZ} \sE)\orth$ such that $\Psi(\sR) \setminus \sT \subseteq \Sigma^{w+1} (\sH \setminus \inj{\sH})$, where $\Psi$ is defined in the proof of Proposition~\ref{prop:reduction}.

Now, by Lemma~\ref{lem:perp}, $(\Sigma^{\bZ} \sT)\orth \simeq \Db(\kk Q')$ for some acyclic quiver $Q'$ with $n - k$ vertices. Let $\sR'$ be image of the set of simple $\kk Q'$-modules in $(\Sigma^{\bZ} \sT)\orth$ under the equivalence, and set $\sR = \Sigma^{w+1} \sR'$. Again, by Lemma~\ref{lem:perp}, $\sR \subseteq \Sigma^{w+1} \sH \cap (\Sigma^{\bZ} \sT)\orth$. Since $\sT \subseteq \sX \cap \Sigma^w \sY$, by the hereditary property we have that the object $T_R$ in \eqref{psi} is zero for each $R \in \sR$, whence $S_R = E$ and $\Psi(\sR) \setminus \sT = \sR$. Finally, since $\sT$ is $w$-sincere, for each injective $\kk Q$ module $I$ we have $\Hom_{\sD}(\sT,\Sigma^{w+1}I) \neq 0$, whence for each $R \in \sR$ we have $R \notin \Sigma^{w+1}(\inj{\kk Q})$. It follows that $\Theta$ is surjective.

To see that $\Theta$ is injective, suppose $\sS$ is a simple-minded collection of $\sD$ contained in $\Fw$ such that $\Theta(\sS) = \sT$. Let $\sP = \sS \setminus \sT$ and note that $\sP \subseteq \Sigma^{w+1} \sH \cap (\Sigma^{\bZ} \sT)\orth$. It follows, by the hereditary property, that for each $P \in \sP$ the object $T_P$ in the truncation triangle \eqref{phi} must be zero, whence $\Phi(\sP) = \sP$. In particular, by Proposition~\ref{prop:reduction} and Lemma~\ref{lem:perp}, $\sP$ is a simple-minded collection of $(\Sigma^{\bZ} \sT)\orth$ such that $\sP \subseteq \Sigma^{w+1} \sE\orthH$, i.e. is concentrated in one degree with respect to the standard heart $\sE\orthH \simeq \mod{\kk Q'}$ in $(\Sigma^{\bZ} \sT)\orth \simeq \Db(\kk Q')$. Therefore, the objects of $\sP$ can be ordered into an exceptional sequence by Lemma~\ref{lem:exceptional}. It now follows by \cite[Theorem 3]{Ringel}, which asserts that the unique orthogonal complete exceptional sequence in $\mod{\kk Q'}$ is that consisting of the simple $\kk Q'$-modules. Hence, $\sP = \sR$ and $\Theta$ is injective.
\end{proof}

%============================================================================
% SECTION
\section{Noncrossing partitions} \label{sec:noncrossing}
%============================================================================

Let $Q$ be an acyclic quiver. In this section, we establish a bijection between $w$-simple-minded systems in $\Cw(\kk Q)$ and positive $w$-noncrossing partitions of the corresponding Weyl group $W_Q$. This bijection generalises the existing bijection in the case that $Q$ is Dynkin, see \cite{BRT,CS12,IJ}.
The strategy follows that of \cite{CS12}.
We start with a brief review of the Weyl group of an acyclic quiver and noncrossing partitions. The main reference for the Weyl group is \cite{Ringel}, while for noncrossing partitions we refer to \cite{Armstrong}. 

\subsection{Weyl group and noncrossing partitions}

Consider the symmetric bilinear form on the Grothendieck group $K_0 (\kk Q)$ defined by
\[
([X],[Y]) \coloneqq \dim \Hom_{\kk Q} (X,Y) + \dim \Hom_{\kk Q} (Y,X) - \dim \Ext^1_{\kk Q} (X,Y) - \dim \Ext^1_{\kk Q} (Y,X),
\]
for $X, Y \in \mod{\kk Q}$. Given $X \in K_0 (\kk Q)$ with $(X,X) \in \{-2,-1,1,2\}$, the {\it reflection $t_X$ along $X$} is the isometry
\[ t_X
\colon K_0 (\kk Q) \too K_0 (\kk Q), \quad
t_X (Y) = Y - \frac{2 (Y,X)}{(X,X)} X .
\]

Let $S_1, \ldots, S_n$ be the simple $\kk Q$-modules and note that $([S_i],[S_i])=2$. The \emph{Weyl group $W_Q$ associated to the quiver $Q$} is the group of isometries generated by $R \coloneqq \{t_{[S_1]}, \ldots, t_{[S_n]}\}$. The set $R$ is called the set of \emph{simple reflections} of $W_Q$, and the set of all reflections in $W_Q$ is denoted by $T$. 

The \emph{absolute length} of $w \in W_Q$, denoted by $\ell_T (w)$, is the minimum length of $w$ written as a product of reflections. We call a minimum length expression for $w$ written as a product of reflections a \emph{$T$--reduced expression of $w$}; we denote the set of all such expressions by $\Red_T (w)$. 

A \emph{parabolic subgroup} of $W_Q$ is a subgroup of $W_Q$ generated by a subset of $R$. 
The following result, which holds for any Coxeter group $W$ of finite rank, will be useful later. 

\begin{theorem}[{\cite[Theorem 1.4]{BDSW}}]\label{thm:parabolic}
Let $W'$ be a parabolic subgroup of $W$. For each $u \in W'$, we have $\Red_T (u) = \Red_{T'} (u)$, where $T' = W' \cap T$ is the set of reflections in $W'$. 
\end{theorem}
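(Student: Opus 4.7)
The statement has two things to prove: first, the length equality $\ell_T(u) = \ell_{T'}(u)$ (so that $T'$-reduced expressions are $T$-reduced), and second, that every $T$-reduced expression of $u$ already consists of reflections in $T'$. The plan is to go through the geometric (Tits) representation and argue both simultaneously by locating the reflections appearing in a $T$-reduced expression inside a concrete subspace.

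First I would fix the standard geometric representation of $W$ on a real vector space $V$ together with its root system $\Phi \subseteq V$, so that each $t \in T$ corresponds to a (positive) root $\alpha_t \in \Phi$, up to sign, and acts on $V$ as the reflection $\beta \mapsto \beta - 2 \tfrac{(\beta,\alpha_t)}{(\alpha_t,\alpha_t)} \alpha_t$. Let $R' \subseteq R$ be the set of simple reflections generating $W'$ and let $V' \subseteq V$ be the span of the corresponding simple roots. A standard fact about parabolic subgroups of Coxeter groups is that $V'$ is $W'$-stable, that $\Phi \cap V'$ is precisely the root system of $W'$, and that the set of reflections $T'$ is exactly $\{ t \in T \mid \alpha_t \in V'\}$. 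In particular, $u \in W'$ acts trivially on some $W'$-stable complement of $V'$, so the "movable subspace" $M(u) \coloneqq \mathrm{Im}(u - \id_V)$ is contained in $V'$.

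The core technical input I would then invoke is the length-equals-codimension-of-fixed-space formula: if $u = t_1 \cdots t_k$ is any $T$-reduced expression then $\dim M(u) = k = \ell_T(u)$ and moreover the roots $\alpha_{t_1}, \ldots, \alpha_{t_k}$ form a basis of $M(u)$. In the finite case this is Carter's lemma; for general Coxeter groups of finite rank one needs the analogous result due to Dyer (and this is essentially the content that BDSW codifies and extends). Granted this, the argument finishes quickly: if $u \in W'$ and $u = t_1 \cdots t_k$ is $T$-reduced, then each $\alpha_{t_i} \in M(u) \subseteq V'$, so $\alpha_{t_i} \in \Phi \cap V'$, whence $t_i \in T'$. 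This simultaneously shows that $u$ has a $T'$-expression of length $\ell_T(u)$, forcing $\ell_{T'}(u) \leq \ell_T(u)$ and hence $\ell_{T'}(u) = \ell_T(u)$ (the opposite inequality being obvious from $T' \subseteq T$), and that every $T$-reduced word lies in $T'$. Combined with the obvious inclusion $\Red_{T'}(u) \subseteq \Red_T(u)$ coming from the length equality, this gives $\Red_T(u) = \Red_{T'}(u)$.

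The main obstacle is the basis statement in the middle step: while $M(u) \subseteq V'$ is elementary, the equality $\dim M(u) = \ell_T(u)$ together with the claim that the roots appearing in \emph{any} $T$-reduced factorization span $M(u)$ is the hard, non-formal part of the theory, and is precisely where a careful treatment of reflection length in possibly infinite Coxeter groups is required. A secondary subtlety to check carefully is the identification $\Phi \cap V' = \Phi_{W'}$ for parabolic $W'$; although this is standard for standard parabolics, one should verify that the statement and the proof are insensitive to replacing $W'$ by a $W$-conjugate (so that the result applies to arbitrary parabolic subgroups as defined in the text), which follows by transporting reflections via conjugation.
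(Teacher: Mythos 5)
The paper does not prove this statement: it is quoted verbatim from \cite{BDSW}, so there is no ``paper's own proof'' to compare against. That said, your proposed argument has a genuine gap at the step you yourself flag as the main obstacle. The formula $\ell_T(u) = \dim M(u)$ with the roots $\alpha_{t_1},\ldots,\alpha_{t_k}$ forming a basis of $M(u) = \mathrm{Im}(u - \id)$ is Carter's lemma, and it is \emph{specific to finite reflection groups}. It fails for infinite Coxeter groups. For example, take $W = W_{\tilde{A}_1}$ (the infinite dihedral group, i.e.\ the Weyl group of the Kronecker quiver), with simple reflections $s,t$ and $u = st$. Then $u$ is neither the identity nor a reflection, so $\ell_T(u) = 2$, but $u$ acts in the geometric representation by the unipotent matrix $\begin{pmatrix} 3 & -2 \\ 2 & -1 \end{pmatrix}$, so $u - \id$ has rank $1$ and $\dim M(u) = 1 \neq \ell_T(u)$. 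Thus the ``Dyer-strengthened Carter lemma'' you invoke is not available, and the chain $M(u) \subseteq V' \Rightarrow$ each $\alpha_{t_i} \in V'$ collapses.

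This matters for the paper: the quiver $Q$ is an arbitrary finite acyclic quiver, so $W_Q$ is in general an infinite (Kac--Moody) Weyl group; indeed the paper explicitly works with $Q = \tilde{A}_1$. The actual proof in \cite{BDSW} does not go through moved spaces; it relies on Dyer's theory of reflection subgroups --- in particular the fact that the \emph{parabolic closure} of $u$ coincides with the parabolic closure of the set $\{t_1,\ldots,t_k\}$ for any $T$-reduced expression $u = t_1\cdots t_k$, from which containment of each $t_i$ in any parabolic $W' \ni u$ follows immediately. Your reduction from arbitrary parabolics to standard parabolics by conjugation, and the observation that $T' \subseteq T$ gives $\Red_{T'}(u) \subseteq \Red_T(u)$ once lengths agree, are both fine; the missing ingredient is a replacement for Carter's lemma that survives passage to infinite Coxeter groups.
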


Recall, e.g.\ from \cite[\S 3.1]{Ingalls-Thomas}, that a \emph{Coxeter element of $W_Q$} is the product of all the simple reflections in some order; in \cite{Armstrong}, a Coxeter element is called a \emph{standard Coxeter element}.
From now on, we will fix a Coxeter element $c \in W_Q$ such that the ordering of the product of simple reflections giving rise to $c$ corresponds to an ordering of the simple $\kk Q$-modules into an exceptional sequence.
Note that $\ell_T (c) = n$. We can now define (positive) $w$-noncrossing partitions for $w \geq 1$;  see \cite{Armstrong}.

\begin{definition}
Let $w \geq 1$, $\su = (u_1, \ldots, u_{w+1})$ be a $(w+1)$-tuple of elements of $W_Q$ and $c$ be the Coxeter element fixed above. The tuple $\su$ is said to be
\begin{enumerate}[label=(\roman*)]
\item a \emph{$w$-noncrossing partition} if $c = u_1 u_2 \cdots u_{w+1}$ and $n = \ell_T (c) = \ell_T (u_1) + \cdots + \ell_T (u_{w+1})$; and, 
\item a \emph{positive $w$-noncrossing partition} if it is a $w$-noncrossing partition such that the product $u_2 \cdots u_{w+1}$ does not lie in any proper parabolic subgroup. 
\end{enumerate}
The set of (positive) $w$-noncrossing partitions of $W_Q$ with respect to $c$ will be denoted by 
$\NC_w^{(+)} (W_Q)$, with the fixed Coxeter element $c$ implicitly understood.
\end{definition}
 
We end this section with the following useful connection between exceptional sequences and expressions for the Coxeter element.
 
\begin{theorem}[{\cite[Theorem 4.1]{IS}}] \label{thm:coxeter-exceptional}
Let $Q$ be an acyclic quiver. Then the following assertions hold.
\begin{enumerate}
\item For any exceptional $\kk Q$-module $E$, we have $t_{[E]} \in T$. 
\item If $(X_1, \ldots, X_n)$ is a complete exceptional sequence in $\mod{\kk Q}$, setting $t_i = t_{[X_i]}$, we have $c = t_1 \cdots t_n$. 
\item If $c = t_1 \cdots t_n$, with $t_i \in T$, then $t_i = t_{[X_i]}$, for some exceptional $\kk Q$-module $X_i$,  and $(X_1, \ldots, X_n)$ is a complete exceptional sequence. 
\end{enumerate}
\end{theorem}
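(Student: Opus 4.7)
The plan is to prove the three parts by leveraging the braid group action on exceptional sequences together with the transitivity theorem of Crawley-Boevey and Ringel, which asserts that the braid group $B_n$ acts transitively on the set of complete exceptional sequences in $\mod{\kk Q}$.

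For part (1), an exceptional module $E$ satisfies $\dim\Hom_{\kk Q}(E,E) = 1$ and $\Ext^1_{\kk Q}(E,E) = 0$, so $([E],[E]) = 2$ and the formal reflection $t_{[E]}$ is a well-defined isometry of $K_0(\kk Q)$. To show $t_{[E]}\in W_Q$, I would first note that every exceptional module occurs as a term of some complete exceptional sequence. A braid mutation $(X_i,X_{i+1})\mapsto(L_{X_i}X_{i+1},X_i)$ (and its inverse) transforms the associated pair of reflections by $(t_i,t_{i+1})\mapsto(t_it_{i+1}t_i^{-1},t_i)$, which preserves membership in $W_Q$. Since the simple modules $(S_1,\ldots,S_n)$ form a complete exceptional sequence in an appropriate admissible order and their reflections are precisely the simple reflections of $W_Q$, an induction along the orbit of the braid group action gives (1).

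Part (2) follows from the same setup. For the starting complete exceptional sequence of simples, the product $t_{[S_1]}\cdots t_{[S_n]}$ equals the fixed Coxeter element $c$ by construction. The braid mutation described above leaves the product $t_1\cdots t_n$ invariant, since $(t_it_{i+1}t_i^{-1})\cdot t_i = t_i t_{i+1}$. Combined with transitivity of the braid action on complete exceptional sequences, this implies $t_1\cdots t_n = c$ for every complete exceptional sequence. A key point is that the product is well defined because $n = \ell_T(c)$, so no cancellation occurs.

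Part (3) requires constructing an inverse to (2). My approach would be induction on $n$. Starting from a minimal factorization $c = t_1\cdots t_n$ with each $t_i\in T$, one first shows that the final reflection $t_n$ has the form $t_{[X_n]}$ for some exceptional module $X_n$. The fixed hyperplane of $t_n$ then cuts out a sublattice that, by the theory of perpendicular categories (Geigle-Lenzing, Schofield) and Lemma~\ref{lem:perp}, is identified with $K_0$ of a module category $\mod{\kk Q'}$ for an acyclic quiver $Q'$ with $n-1$ vertices. By Theorem~\ref{thm:parabolic}, the reduced factorization of $c\cdot t_n^{-1}$ lives in the parabolic subgroup $W_{Q'}$, and the induction hypothesis yields a complete exceptional sequence $(X_1,\ldots,X_{n-1})$ in $\mod{\kk Q'}$. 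Translating back through Schofield's equivalence, $(X_1,\ldots,X_{n-1},X_n)$ becomes a complete exceptional sequence in $\mod{\kk Q}$. The main obstacle is verifying that the last reflection in a $T$-reduced factorization of $c$ is necessarily a \emph{Schur} reflection (i.e., comes from an exceptional module, not merely a real root); this can be reduced to a counting argument equating $|\Red_T(c)|$ with the number of complete exceptional sequences, using the perpendicular-category reduction to bootstrap from the Dynkin case via a generating-function identity. The other deep input, needed throughout, is the transitivity of the braid group action on complete exceptional sequences for arbitrary acyclic $Q$.
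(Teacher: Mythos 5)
This theorem is not proved in the paper: it is quoted verbatim from Igusa--Schiffler \cite[Theorem 4.1]{IS}, so your attempt can only be measured against that source. For parts (1) and (2) your argument is essentially the standard (and correct) one: every exceptional module completes to a complete exceptional sequence \cite{CB}, a braid mutation replaces $(t_i,t_{i+1})$ by $(t_it_{i+1}t_i^{-1},t_i)$ because the class of the mutated module is $\pm t_{[X_i]}([X_{i+1}])$ (a computation you assert but do not carry out), conjugation preserves $T$ and the product $t_1\cdots t_n$, and transitivity of the braid action reduces everything to the sequence of simples. The aside in (2) about ``no cancellation because $n=\ell_T(c)$'' is irrelevant: the product of group elements is well defined regardless.

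Part (3), however, contains a genuine gap, and it is exactly at the point you flag as the ``main obstacle''. You must show that every reflection occurring in a length-$n$ factorization of $c$ is of the form $t_{[E]}$ for an \emph{exceptional} module $E$, i.e.\ corresponds to a real Schur root and not merely to a positive real root; this is the heart of the theorem, and your proposed remedy --- equating $|\Red_T(c)|$ with the number of complete exceptional sequences and ``bootstrapping from the Dynkin case via a generating-function identity'' --- does not work for a general acyclic $Q$. Outside Dynkin type the Weyl group is infinite, there are infinitely many exceptional modules, infinitely many complete exceptional sequences, and the absolute-order interval $[e,c]$ (hence the set of $T$-reduced expressions of $c$) is infinite as well, so a cardinality or unsubstantiated generating-function comparison proves nothing; making such a comparison graded and exact would be essentially equivalent to the theorem itself. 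In addition, your inductive step glosses over two nontrivial points: one must show that $ct_n^{-1}$, viewed inside the reflection subgroup attached to the perpendicular category $X_n^{\perp_{\sH}}$ of Lemma~\ref{lem:perp}, is again a Coxeter element compatible with an exceptional ordering of the simples of $\mod{\kk Q'}$, and one must justify that this reflection subgroup is (conjugate to) a parabolic subgroup before invoking Theorem~\ref{thm:parabolic}. Igusa--Schiffler's actual proof handles the Schur-root issue and the rank-reduction by a different, more delicate induction rather than by counting, so as written your proposal does not establish part (3).
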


It follows from Theorem~\ref{thm:coxeter-exceptional} that the set of (positive) noncrossing partitions does not depend on the ordering of the set of simple $\kk Q$-modules into an exceptional sequence.

\subsection{Simple-minded collections and noncrossing partitions}

For an acyclic quiver $Q$ and an integer $w \geq 1$, we recall the construction of a bijective map
\begin{equation} \label{the-map}
\phi \colon 
\parbox{2cm}{\centering
$\NC_w (W_Q)$}
\rightlabel{1-1}
\left\{ 
\parbox{6.75cm}{\centering
simple-minded collections of $\Db(\kk Q)$ contained in $\sX \cap \Sigma^{w+1} \sY$}
\right\}
\end{equation}
from \cite[Theorem 7.3]{BRT}; see \cite{CS12} for a similarly constructed map in the case that $Q$ is Dynkin.
Let $\su = (u_1, \ldots, u_{w+1}) \in \NC_w (W_Q)$. We construct $\phi(\su)$ in two steps.
\begin{itemize}
\item We have $c = u_1 \cdots u_{w+1}$ and $n = \ell_T (u_1) + \ell_T (u_2) + \cdots + \ell_T (u_{w+1})$. For each $1 \leq i \leq w+1$, pick a $T$-reduced expression for $u_i$. The ordered product of these expressions gives rise to a $T$-reduced expression for $c$ and by Theorem~\ref{thm:coxeter-exceptional}(3) we get a complete exceptional sequence $\sE = (\sE_1,\ldots,\sE_{w+1})$ in $\mod{\kk Q}$, where $\sE_i$ is the subsequence of $\sE$ corresponding to the $T$-reduced expression of $u_i$.
\item Let $\cC_i$ be the smallest wide (=exact abelian extension-closed) subcategory of $\mod{\kk Q}$ containing $\sE_i$. By \cite[Lemma 5]{CB}, $\cC_i$ is equivalent to $\mod{\kk Q_i}$ for some acyclic quiver $Q_i$ with $\ell_T(u_i)$ vertices. Let $\sS_i$ be the set of simples in $\cC_i$.
Then we define
\[
\phi (\su) \coloneqq \bigcup_{i = 1}^{w+1} \Sigma^{w+1-i} \sS_i.
\]
The fact that $\phi (\su)$ is independent of the choice of $T$-reduced expression follows from~{\cite[Theorem 4.3]{IS}}.
\end{itemize}

We now come to the main theorem of this section.
The case when $Q$ is Dynkin and $w=1$ was given in \cite[Theorem 5.7]{CS12}. The case when $Q$ is Dynkin and $w \geq 1$ is an integer was established in \cite[Theorem 7.4]{BRT} (see also \cite[Theorem 1.1]{IJ}), via bijections with $m$-clusters, and using a different description of positive $w$-noncrossing partitions.
Before proceeding, we require a definition.

For a $\kk Q$-module $X$, the \emph{support of $X$} is $\supp{X} \coloneqq \{a \in Q_0 \mid \Hom (P_a, X) \neq 0 \}$. 
Equivalently, $\supp{X} = \{a \in Q_0 \mid S_a \text{ occurs in a composition series for } X\}$.
For a set $\sX$ of $\kk Q$-modules the \emph{support of $\sX$} is $\supp{\sX} = \bigcup_{X \in \sX} \supp{X}$.  

\begin{theorem} \label{thm:noncrossing}
Let $Q$ be an acyclic quiver. The map $\phi$ defined in \eqref{the-map} restricts to a bijection
\[
\phi \colon 
\parbox{2cm}{\centering
$\NC_w^+ (W_Q)$}
\rightlabel{1-1} 
\left\{ 
\parbox{6.75cm}{\centering
simple-minded collections of $\Db(\kk Q)$ contained in $\Fw$}
\right\}. 
\]
\end{theorem}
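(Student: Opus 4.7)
The plan is to combine the bijection $\phi$ from \eqref{the-map} with Theorem~\ref{thm:w-sincere} to reduce the assertion to a combinatorial statement about sincerity of modules and parabolic subgroups. Since $\phi$ is already a bijection between $\NC_w(W_Q)$ and simple-minded collections contained in $\sX\cap\Sigma^{w+1}\sY$, and $\Fw\subseteq\sX\cap\Sigma^{w+1}\sY$, it suffices to show that for $\su=(u_1,\ldots,u_{w+1})\in\NC_w(W_Q)$, the simple-minded collection $\phi(\su)$ lies in $\Fw$ if and only if $\su\in\NC_w^+(W_Q)$.

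First I would apply Theorem~\ref{thm:w-sincere}: a simple-minded collection $\sS$ is contained in $\Fw$ if and only if $\Theta(\sS)\coloneqq \sS\cap\sX\cap\Sigma^w\sY$ is an exceptionally finite, $w$-sincere $\infty$-orthogonal collection in $\Db(\kk Q)$. For $\sS=\phi(\su)=\bigcup_{i=1}^{w+1}\Sigma^{w+1-i}\sS_i$, only the top summand $\Sigma^w\sS_1$ sits outside $\sX\cap\Sigma^w\sY$, so $\Theta(\phi(\su))=\bigcup_{i=2}^{w+1}\Sigma^{w+1-i}\sS_i$. This subset of $\phi(\su)$ is automatically $\infty$-orthogonal; it is also exceptionally finite, since the shifted concatenation of the exceptional sequences $\sE_2,\ldots,\sE_{w+1}$ is an exceptional sequence in $\Db(\kk Q)$, and $\extn{\Theta(\phi(\su))}_{\Db(\kk Q)}\subseteq \extn{\phi(\su)}_{\Db(\kk Q)}$ is functorially finite in $\Db(\kk Q)$ by Lemma~\ref{lem:basic-properties}(3) combined with Corollary~\ref{cor:extn-of-smc-funct-finite}. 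Thus the condition $\phi(\su)\subseteq\Fw$ reduces to the $w$-sincerity of $\Theta(\phi(\su))$, which is the statement that $\bigcup_{i=2}^{w+1}\sS_i$ is a sincere set of $\kk Q$-modules.

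The core of the proof is then the combinatorial equivalence: $\bigcup_{i=2}^{w+1}\sS_i$ is sincere if and only if $u\coloneqq u_2\cdots u_{w+1}$ lies in no proper parabolic subgroup of $W_Q$. I would handle both directions by contrapositive. If $u\in W_J$ for some proper $J\subsetneq Q_0$, then length-additivity $\ell_T(u)=\sum_{i\geq 2}\ell_T(u_i)$ implies that concatenating $T$-reduced expressions of the $u_i$ produces a $T$-reduced expression for $u$; Theorem~\ref{thm:parabolic} then forces every reflection in that expression to lie in $W_J\cap T$, so each $u_i\in W_J$ for $i\geq 2$. By Theorem~\ref{thm:coxeter-exceptional}, the modules in $\sE_i$ are exceptional modules of $\mod{\kk Q_J}$, so $\cC_i\subseteq\mod{\kk Q_J}$ and $\supp{\sS_i}\subseteq J$; hence $\bigcup_{i\geq 2}\sS_i$ is supported in the proper subset $J$ and is not sincere. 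The reverse runs along the same lines: if $J\coloneqq\supp{\bigcup_{i\geq 2}\sS_i}\subsetneq Q_0$, then each $\cC_i$, being the smallest wide subcategory of $\mod{\kk Q}$ containing $\sS_i$, is contained in the wide subcategory $\mod{\kk Q_J}$; hence $\sE_i\subseteq\mod{\kk Q_J}$, the reflections $t_{[X]}$ for $X\in\sE_i$ lie in $W_J$, and so $u_i\in W_J$ for each $i\geq 2$, giving $u\in W_J$.

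I expect the main obstacle to lie in this combinatorial step, specifically the identification ``$t_{[X]}\in W_J$ if and only if the exceptional module $X$ lies in $\mod{\kk Q_J}$'', which rests on matching the root system of the parabolic $W_J$ with the dimension vectors of exceptional modules of the Levi subquiver $Q_J$. Once this translation and Theorem~\ref{thm:parabolic} are in hand, the combinatorial equivalence is routine, and combining it with the reduction via Theorem~\ref{thm:w-sincere} immediately yields the restricted bijection.
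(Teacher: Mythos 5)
Your proposal is essentially correct and takes the same route as the paper's proof: both reduce the statement via Theorem~\ref{thm:w-sincere} to a claim about sincerity, and then translate sincerity into the non-existence of a proper parabolic subgroup containing $u_2\cdots u_{w+1}$ using length-additivity, Theorem~\ref{thm:parabolic}, Theorem~\ref{thm:coxeter-exceptional}, and the relationship between supports of exceptional modules and parabolic subgroups. Where you differ in presentation is that you package the whole argument into a single ``if and only if'' ($\phi(\su)\subseteq\Fw \Leftrightarrow \su$ positive) and prove both contrapositives, whereas the paper splits the argument into well-definedness of the restriction (one contrapositive) and surjectivity (the other contrapositive); these are the same argument read in the two directions. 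One point to flag, which the paper itself also glosses over: the equivalence ``$\sS\subseteq\Fw$ iff $\Theta(\sS)$ is exceptionally finite, $w$-sincere and $\infty$-orthogonal'' for a simple-minded collection $\sS$ a priori only contained in $\sX\cap\Sigma^{w+1}\sY$ is not literally the content of Theorem~\ref{thm:w-sincere} (which is a bijection between two fixed sets, with $\Theta$ defined only on collections already in $\Fw$). The ``if'' direction additionally uses that the injectivity/uniqueness argument in the proof of Theorem~\ref{thm:w-sincere} (via Ringel's theorem that the unique orthogonal complete exceptional sequence consists of the simples) applies to any such $\sS$, so that $\sS$ agrees with the unique lift $\Theta^{-1}(\Theta(\sS))\subseteq\Fw$. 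You and the paper both leave this implicit; it is a short but genuine step. Finally, the key translation you identify as the main obstacle --- that $t_{[E]}$ lies in $W_J$ precisely when the exceptional module $E$ is supported on $J$ --- is what the paper proves (in the needed direction) via transitivity of the braid group action on complete exceptional sequences, citing Crawley-Boevey and Igusa--Schiffler; your root-system sketch of this step is along the right lines but would need to be made precise along those lines to be complete.
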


\begin{proof}
Let $\su = (u_1, u_2, \ldots, u_{w+1}) \in \NC_w^+ (W_Q)$. Then, by the construction of the map $\phi$ in \eqref{the-map}, $\phi (\su) = \bigcup_{i = 1}^{w+1} \Sigma^{w+1-i} \sS_i$ is a simple-minded collection of $\Db(\kk Q)$ lying in $\sX \cap \Sigma^{w+1} \sY$.
To see that the restriction is well defined, by Theorem~\ref{thm:w-sincere}, it suffices to check that $\bigcup_{i = 2}^{w+1} \Sigma^{w+1-i} \sS_i \subseteq \sX \cap \Sigma^w \sY$ is an exceptionally finite, $w$-sincere $\infty$-orthogonal collection in $\Db(\kk Q)$. 
This set is clearly $\infty$-orthogonal.
Functorial finiteness and exceptionality follow immediately from Lemma~\ref{lem:basic-properties}(3) and Corollary~\ref{cor:extn-of-smc-funct-finite}, and Lemma~\ref{lem:exceptional}, respectively.
Only $w$-sincerity remains to be checked.

Suppose for a contradiction that $\bigcup_{i = 2}^{w+1} \Sigma^{w+1-i} \sS_i \subseteq \sX \cap \Sigma^w \sY$ is not $w$-sincere. 
We claim that if $E$ is an exceptional $\kk Q$-module, then $t_{[E]}$ lies in the parabolic subgroup generated by $\{t_{[S_a]} \in R \mid a \in \supp{E} \}$. Indeed, consider the wide subcategory $\cC_E$ generated by the set of simple $\kk Q$-modules $\{S_a \mid a \in \supp{E} \}$. The exceptional module $E$ lies in $\cC_E$ and it can be extended to a complete exceptional sequence $\sE$ in $\cC_E$ (see \cite[Lemma 1]{CB}). By the transitivity of the action of the braid group on the set of complete exceptional sequences (see \cite{CB} again), $\sE$ is obtained from the set $\{S_a \mid a \in \supp{\sE} \}$ ordered into a complete exceptional sequence in $\cC_E$, via a sequence of mutations corresponding to the braid group action. The claim then follows from \cite[\S 2.2]{IS}. 
As a consequence, we have that $u_2 \cdots u_{w+1}$ lies in the parabolic subgroup generated by $\{t_{[S_a]} \in R \mid a \in \supp{\bigcup_{i = 2}^{w+1} \sE_i} \}$. 
This subgroup is a proper parabolic subgroup since $\bigcup_{i = 2}^{w+1} \sE_i$ is not sincere, as  $\supp{\sS_i} = \supp{\cC_i} = \supp{\sE_i}$, for each $i$. This contradicts the fact that $\su$ is a positive $w$-noncrossing partition. Therefore, $\bigcup_{i = 2}^{w+1} \Sigma^{w+1-i} \sS_i \subseteq \sX \cap \Sigma^w \sY$ is indeed $w$-sincere. 

Since $\phi$ is the restriction of a bijection, it is clearly injective. It remains to check that $\phi$ is surjective. 
Let $\sS$ be a simple-minded collection contained in $\Fw$. By Lemma~\ref{lem:exceptional}, we can order the elements in $\sS$ into a (complete) exceptional sequence $\sE$ in which the cohomological degrees are weakly decreasing. For each $1\leq i \leq w+1$, let $\sE_i$  the subsequence consisting of the elements of cohomological degree $i-1$, and $\cC_i$ be the smallest wide subcategory of $\mod{\kk Q}$ containing $H(\sE_i)$, where $H \colon \Db(\kk Q) \to \mod{\kk Q}$ is the standard cohomology functor; see \eqref{cohomology}. Order the simple objects of $\cC_i$ into an exceptional sequence (cf.~{\cite[Theorem 3]{Ringel}}), and let $u_i$ be the product of the corresponding reflections respecting the order of the exceptional sequence, which gives a $T$-reduced expression for $u_i$. Then $\su = (u_{w+1}, \ldots, u_1)$ is a $w$-noncrossing partition and $\phi (\su) = \sS$. 

Finally, we must check that $\su$ is positive. Suppose $\su$ is not positive. Then $u_w \cdots u_1$ lies in a proper parabolic subgroup  $W_J$ generated by $J \subsetneq R$. For each $1 \leq i \leq w$, each reflection appearing in the $T$-reduced expression of $u_i$ above lies in $W_J$ by Theorem~\ref{thm:parabolic}. This means that for $1 \leq i \leq w$ the simple objects of $\cC_i$ do not have support at the vertices of $Q$ corresponding to the simple reflections at $R \setminus J$. Since the support of $H(\sE_i)$ coincides with that of the simple objects in $\cC_i$, it follows that $\sS \cap \sX \cap \Sigma^w \sY$ is not $w$-sincere, contradicting Theorem~\ref{thm:w-sincere}. Hence, $\su$ must be positive and $\phi$ is surjective.
\end{proof}

%============================================================================
% APPENDIX
\appendix
\section{Reduction of simple-minded collections revisited} \label{sec:reduction}

\begin{center}
\small by Raquel Coelho Sim\~oes, David Pauksztello and Alexandra Zvonareva
\end{center}
%============================================================================

In this appendix, we provide an alternative proof of \cite[Theorem 3.1]{Jin2}, which avoids using a Verdier localisation and is similar to the analogous result for simple-minded systems in \cite[Section 6]{CSP20}.

Throughout, $\sD$ will be a Hom-finite, Krull-Schmidt, $\kk$-linear triangulated category with shift functor $\Sigma \colon \sD \to \sD$. We will impose the following setup.

\begin{setup} \label{blanket-setup}
Let $\sS$ be an $\infty$-orthogonal collection of objects in $\sD$ and $\sZ$ a subcategory of $\sD$ satisfying the following conditions:
\begin{enumerate}
\item $\extn{\sS}$ is covariantly finite in ${}\orth (\Sigma^{< 0} \sS)$ and contravariantly finite in $(\Sigma^{> 0} \sS)\orth$.
\item for $d \in \sD$, we have $\Hom_\sD (d, \Sigma^{\ll 0} \sS) = 0$ and  $\Hom_\sD (\Sigma^{\gg 0} \sS, d) = 0$; and, 
\item $\sZ \coloneqq {}\orth (\Sigma^{\leq 0} \sS) \cap (\Sigma^{\geq 0} \sS)\orth$.
\end{enumerate}
\end{setup}

In fact, if $\sD = \Db(A)$ for a finite-dimensional algebra $A$, the stronger condition that $\extn{\sS}$ is functorially finite in $\Db(A)$ holds by Lemmas~\ref{lem:basic-properties}(3) and \ref{cor:extn-of-smc-funct-finite}.

We recall the following construction from \cite[Section 4]{CSP20} (see also \cite{Jin2, Nakaoka}):
\begin{itemize}
\item For an object $z \in \sZ$, a functor $\extn{1} \colon \sZ \to \sZ$ is defined on objects by taking the cone of a minimal right $\extn{\sS}$-approximation: $\tri{s_z}{\Sigma z}{z\extn{1}}$. The functor $\extn{1}$ is defined on morphisms in the obvious way, and its quasi-inverse $\extn{-1}$ by the dual construction; see \cite[Lemma 3.6]{CSP20}.
\item For a morphism $f \colon x \to y$ in $\sZ$, consider the triangle 
$\trilabels{x}{y}{c_f}{f}{g_1}{h_1}$ in $\sD$ together with the minimal right $\extn{\sS}$-approximation triangles of $c_f$ and $\Sigma x$ and complete to the commutative diagram below.
\[
\xymatrix@!R=8px{& & s_f \ar[d]^{\alpha_f} \ar[r]^{\sigma} & s_x \ar[d]^{\alpha_x} \\
x \ar[r]^f & y \ar[r]^{g_1} \ar[dr]_g & c_f \ar[r]^{h_1} \ar[d]^{\beta_f} & \Sigma x \ar[d]^{\beta_x} \\
& & z_f \ar[r]^h \ar[d]^{\gamma_f} & x \extn{1} \ar[d]^{\gamma_x} \\
& & \Sigma s_f \ar[r]_{\Sigma \sigma} & \Sigma s_x,}
\]
Note that applying $\Hom_\sD(\Sigma^{>0} \sS,-)$ to the triangle $\tri{x}{y}{c_f}$ shows that both $c_f, \Sigma x \in (\Sigma^{>0} \sS)\orth$ so that the required approximations exist. 
\end{itemize}

\begin{theorem}[{\cite[Proposition 3.6]{Jin2}}]
The category $(\sZ,\extn{1})$ admits a triangulated structure with standard triangles given by diagrams of the form $x \rightlabel{f} y \too z_f \too x\extn{1}$.
\end{theorem}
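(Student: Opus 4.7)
The plan is to adapt the strategy of Coelho Sim\~oes and Pauksztello from Section~6 of \cite{CSP20}, who established an analogous triangulated structure on a subquotient of a negative Calabi--Yau triangulated category directly, without recourse to a Verdier localisation. The guiding principle is that the candidate triangles in $\sZ$ arise as traces of genuine octahedral diagrams in the ambient category $\sD$, and the axioms are verified by exploiting the approximation properties of $\extn{\sS}$ supplied by Setup~\ref{blanket-setup}.

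The first preparatory task is to check that for every morphism $f\colon x \to y$ in $\sZ$, the object $z_f$ obtained from the defining diagram indeed lies in $\sZ$. Applying $\Hom_\sD(\Sigma^i \sS, -)$ and $\Hom_\sD(-, \Sigma^i \sS)$ to the triangles $\tri{s_f}{c_f}{z_f}$ and $\tri{z_f}{x\extn{1}}{\Sigma s_f}$ appearing in the defining octahedron, and using that $s_f \in \extn{\sS}$, that $c_f \in (\Sigma^{>0} \sS)\orth$ (obtained in turn from $\tri{x}{y}{c_f}$), and that $x\extn{1} \in \sZ$ via its own defining approximation triangle, the required perpendicularity conditions for $z_f$ follow from the $\infty$-orthogonality of $\sS$. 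Independence of $z_f$ up to canonical isomorphism from the choice of minimal approximation is standard, and functoriality of $z_{(-)}$ on morphisms follows in the usual way.

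I would then verify the axioms of a triangulated category in order. Axiom (TR1) is essentially immediate: every morphism completes to a standard triangle by construction; the identity $\id_x$ has $c_{\id_x} = 0$, whence $z_{\id_x} = 0$; and closure of the class of standard triangles under isomorphism follows from naturality and the uniqueness (up to isomorphism) of minimal approximations. For the rotation axiom (TR2), one applies the construction to the morphism $y \to z_f$ and uses the octahedral axiom in $\sD$ to identify the third vertex of the resulting diagram with $x\extn{1}$, thereby producing the rotated standard triangle $y \to z_f \to x\extn{1} \to y\extn{1}$ in $\sZ$. Axiom (TR3) is handled by lifting a given commutative square in $\sZ$ to a compatible morphism between the underlying minimal $\extn{\sS}$-approximations, which is possible by the approximation hypotheses combined with the $\infty$-orthogonality of $\sS$.

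The main obstacle is the octahedral axiom (TR4): given composable morphisms $x \rightlabel{f} y \rightlabel{g} w$ in $\sZ$ together with the three standard triangles built from $f$, $g$, and $gf$, one must produce a standard triangle $z_f \to z_{gf} \to z_g \to z_f\extn{1}$ in $\sZ$ interlocking with them. The strategy is to iterate the defining construction via successive applications of the octahedral axiom in $\sD$, carefully tracking which morphisms are minimal right $\extn{\sS}$-approximations, and then to identify the middle edge of the resulting $\sZ$-triangle with the one arising directly from $gf$ by the uniqueness of minimal approximations. The technical heart of the argument lies in the bookkeeping of approximations across four interlocking octahedra, in close parallel with \cite[Section~6]{CSP20}.
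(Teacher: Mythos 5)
Your proposal takes a genuinely different route from the paper. The paper's own proof is a one-line deferral: it observes that $\sS$ and $\sZ$ from Setup~\ref{blanket-setup} satisfy the hypotheses of \cite[Lemma 6.3]{CSP20} and then invokes \cite[Theorem 4.1]{CSP20} as a black box (offering Jin's \cite[Proposition 3.6]{Jin2} and Nakaoka's \cite[Theorem 4.15]{Nakaoka} as alternatives). You instead outline a direct verification of the triangulated-category axioms (TR1)--(TR4) in $\sZ$ from scratch. These are not the same argument at the level of what one actually has to write down: the paper's route requires checking that the stated hypotheses imply the setup of the cited theorem and nothing more, whereas your route requires reproducing the technical content of that theorem. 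Your outline is consistent in spirit with what the cited sources actually prove, so in that sense it is a legitimate alternative, but it is substantially more work and, as written, is a strategy sketch rather than a proof: the preliminary check that $z_f\in\sZ$, the lifting needed for (TR3), and in particular the octahedral axiom (TR4) are all stated as programme rather than carried out, and (TR4) is precisely where the real difficulty sits.

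One concrete imprecision worth flagging: you assert that $\tri{z_f}{x\extn{1}}{\Sigma s_f}$ is a triangle appearing in the defining diagram. It is not --- the square with vertices $z_f, x\extn{1}, \Sigma s_f, \Sigma s_x$ is merely commutative. The triangle you actually need in order to see $z_f\in{}\orth(\Sigma^{\leq 0}\sS)\cap(\Sigma^{\geq 0}\sS)\orth$ (in particular, to kill $\Hom_\sD(\Sigma\sS,z_f)$, which is not controlled by $\tri{s_f}{c_f}{z_f}$ alone since $\Hom_\sD(\sS,s_f)$ may be nonzero) is $\tri{y}{z_f}{x\extn{1}}$, obtained from the octahedral axiom applied to the composite $y\to c_f\to z_f$; there both endpoints already lie in $\sZ$, and the required vanishing follows. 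If you intend to carry out the direct verification you should make this octahedron explicit. Finally, your reference to \cite[Section~6]{CSP20} should really be to \cite[Theorem~4.1]{CSP20} for the triangulated structure itself; Section~6 of that paper is the reduction theorem, which is what the present appendix is adapting, not where the triangulated structure is established.
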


\begin{proof}
This can be proved in several ways. On one hand, observe that $\sS$ and $\sZ$ given in Setup~\ref{blanket-setup} satisfy \cite[Lemma 6.3]{CSP20} so that \cite[Theorem 4.1]{CSP20} can be applied. Otherwise, one can proceed via Jin's construction in \cite[Proposition 3.6]{Jin2}, or Nakaoka's construction in \cite[Theorem 4.15]{Nakaoka}.
\end{proof}

Because there are two triangulated structures under consideration, we differentiate them as in \cite[Section 6]{CSP20} with the following notation. 
Let $\sX$ and $\sY$ be subcategories of $\sZ$. We define 
\[
\sX \star \sY \coloneqq \{ z \in \sZ \mid 
\text{there exists a triangle } x \to z \to y \to x\extn{1} \text{ with } x \in \sX \text{ and } y \in \sY\}.
\]
We denote the extension closure of $\sX$ with respect to the triangulated structure in $\sZ$ by $\extnZ{\sX}$.
The notation $\sX * \sY$ and $\extn{\sX}$ keep their usual meanings in $\sD$.

Before giving our alternative proof of \cite[Theorem 3.1]{Jin2}, we collect some preliminary statements. 

\begin{lemma} \label{lem:prelim}
Let $\sS$ and $\sZ$ be as in Setup~\ref{blanket-setup}. Suppose one of the two conditions holds:
\begin{enumerate}[label=(\alph*)]
\item $\sS \subseteq \sT$ and $\sT$ is a simple-minded collection in $\sD$; in which case set $\sR = \sT \setminus \sS$; or
\item $\sT = \sS \cup \sR$, where $\sR$ is a simple-minded collection in $\sZ$.
\end{enumerate}
Then the following hold:
\begin{enumerate}
\item \label{thick-contain} For $i \geq j$, $\{\sR\extn{i}\} \star \{\sR\extn{i-1}\} \star \cdots \star \{\sR\extn{j}\} \subseteq \extn{\Sigma^i \sT} * \extn{\Sigma^{i-1} \sT} * \cdots * \extn{\Sigma^j \sT}$.
\item \label{susp-intersect} $\susp_\sD \sT \cap \sZ = \susp_\sZ \sR$.
\item \label{cosusp-intersect} $\cosusp_\sD \sT \cap \sZ = \cosusp_\sZ \sR$.
\item \label{susp-finite} $\susp_{\sD} \sS$ is contravariantly finite in $\sD$.
\item \label{cosusp-finite} $\cosusp_{\sD} \sS$ is covariantly finite in $\sD$.
\end{enumerate}
\end{lemma}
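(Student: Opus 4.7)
The plan is to bootstrap from part~(1), which carries the bulk of the technical content; parts~(2) and~(3) follow from~(1) by taking unions and dualising, while parts~(4) and~(5) are independent approximation statements needing only Setup~\ref{blanket-setup}.

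For part~(1), I would argue by a double induction: the outer induction is on the number $i - j + 1$ of $\star$-factors, and an inner induction on $|j|$ handles the base case $i = j$. The key tool, extracted directly from the construction of the triangulated structure on $(\sZ, \extn{1})$ recalled just before the statement, is that any $\sZ$-triangle $z_1 \to z \to z_2 \to z_1\extn{1}$ sits inside a $\sD$-diagram in which the cone $c_f$ of $f \colon z_1 \to z$ in $\sD$ admits a minimal right $\extn{\sS}$-approximation $s_f \to c_f$ whose fibre in the accompanying octahedron is $z_2$. Unwinding this gives an inclusion $\{z_1\} \star \{z_2\} \subseteq \{z_1\} * \extn{\sS} * \{z_2\}$ in $\sD$. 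For the base case $\sR\extn{j} \subseteq \extn{\Sigma^j \sT}$, I would chain the defining triangles $s_{r\extn{j-1}} \to \Sigma r\extn{j-1} \to r\extn{j}$ (and their duals for $j < 0$), invoking the absorption $\extn{\Sigma^k \sS} \subseteq \extn{\Sigma^k \sT}$ together with extension-closure of $\extn{\Sigma^k \sT}$; the hypothesis from either case~(a) or~(b) is used here to control the stray $\extn{\sS}$-factors (for instance, in case~(a), $\infty$-orthogonality of $\sT$ in $\sD$ collapses $s_r = 0$). The outer inductive step then combines the $\sZ$-triangle decomposition with the base case to absorb stray $\extn{\sS}$-factors into neighbouring $\extn{\Sigma^k \sT}$-factors of the $*$-product.

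Parts~(2) and~(3) are set-theoretically dual. For~(2), the forward inclusion $\susp_{\sZ}\sR \subseteq \susp_{\sD}\sT \cap \sZ$ is immediate from part~(1) with $j = 0$, combined with closure of $\sZ$ under $\star$-extensions. For the reverse inclusion, given $z \in \susp_{\sD}\sT \cap \sZ$, I would induct on the length of a shortest $\sT$-filtration of $z$ in $\sD$: the orthogonality $\sZ \subseteq (\Sigma^{\geq 0}\sS)\orth \cap {}\orth(\Sigma^{\leq 0}\sS)$ forces the $\sS$-subquotients in such a filtration to split off or cancel in a controlled way, yielding an honest $\sR$-filtration in $\sZ$ after re-expression via the $\extn{1}$-versus-$\Sigma$ dictionary of part~(1). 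Part~(3) is the exact dual, using $\cosusp$ in place of $\susp$.

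Parts~(4) and~(5) use only Setup~\ref{blanket-setup}. For~(4), given $d \in \sD$, Setup~\ref{blanket-setup}(2) supplies $N \geq 0$ with $\Hom_\sD(\Sigma^{>N} \sS, d) = 0$; Setup~\ref{blanket-setup}(1) then lets me iteratively form right $\extn{\Sigma^k \sS}$-approximations of $d$ for $k = N, N-1, \ldots, 0$, stacking the resulting octahedra to assemble a right $\susp_{\sD}\sS$-approximation. Part~(5) is the exact dual, using covariant finiteness of $\extn{\sS}$ in ${}\orth(\Sigma^{<0}\sS)$ and the vanishing $\Hom_\sD(d, \Sigma^{\ll 0}\sS) = 0$. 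The main obstacle I expect is the reverse inclusion in part~(2): translating a $\sT$-filtration of $z$ in $\sD$ into an $\sR$-filtration in $\sZ$. This is where the two triangulated structures interact most delicately, and it is precisely here that the double-orthogonality characterisation $\sZ = {}\orth(\Sigma^{\leq 0}\sS) \cap (\Sigma^{\geq 0}\sS)\orth$ enters crucially.
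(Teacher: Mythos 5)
The load-bearing step of your plan is the base case of part~(1), namely the single-degree containment $\sR\extn{j} \subseteq \extn{\Sigma^j\sT}$, and this is where there is a genuine gap. The mechanism you offer for controlling the stray $\extn{\sS}$-layers is wrong: $\infty$-orthogonality of $\sT$ gives $\Hom_\sD(\Sigma^{\geq 1}\sT,\sT)=0$, i.e.\ vanishing of maps into \emph{negative} shifts, whereas the approximation defining $r\extn{1}$ is governed by $\Hom_\sD(\sS,\Sigma r)$, a positive-degree space which is generically nonzero even when $\sT$ is a simple-minded collection (already $\Ext^1$ between distinct simple modules), so your parenthetical ``$s_r=0$'' fails. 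Worse, every application of $\extn{1}$ contributes a layer lying in $\Sigma\extn{\sS}\subseteq\extn{\Sigma\sT}$ --- always in degree $1$, never in degree $k$ --- so the proposed absorption into $\extn{\Sigma^k\sT}$ only works at the first step, and your induction yields only $r\extn{i}\in\extn{\Sigma^i\sT}*\cdots*\extn{\Sigma\sT}$. In fact the single-degree containment can fail outright: take $A=\kk Q/(\beta\alpha)$ with $Q\colon 1\xrightarrow{\ \alpha\ }2\xrightarrow{\ \beta\ }3$, let $\sT$ be the simple modules, $\sS=\{S_1\}$ and $r=S_3\in\sR$. Then $r\extn{1}=\Sigma S_3$, and since $\Ext^2_A(S_1,S_3)\neq 0$ the object $r\extn{2}$ is the cone of a nonzero map $S_1\to\Sigma^2 S_3$, which has nonzero cohomology with respect to the heart $\extn{\sT}=\mod{A}$ in two adjacent degrees, hence $r\extn{2}\notin\extn{\Sigma^2\sT}=\Sigma^2\mod{A}$. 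So the base case you aim at cannot be established; what can be established (and is all that is used in the proof of Theorem~\ref{thm:SMCreduction}, where the ranges of degrees always reach $\pm1$) is the range/product form in which these $\Sigma\extn{\sS}$-layers are carried along, and that bookkeeping is precisely the content of Claims A and B in the proof of \cite[Theorem 6.6]{CSP20} and of \cite[Lemma 3.4]{Jin2}, which the paper invokes rather than reproves. Your observation that a $\sZ$-triangle gives $\{z_1\}\star\{z_2\}\subseteq\{z_1\}*\extn{\sS}*\{z_2\}$ in $\sD$ is correct and is indeed the right tool for the inductive step, but it does not repair the base case.

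A second, smaller gap concerns the reverse inclusions in parts~(2) and~(3): these are asserted exactly where the work lies. Peeling a top factor off a $\sT$-filtration of $z\in\susp_\sD\sT\cap\sZ$ need not keep you inside $\sZ$, because for $r\in\sR$ one only knows $\Hom_\sD(r,\Sigma^{\leq 0}\sS)=0$ and not $\Hom_\sD(r,\Sigma\sS)=0$; saying that the $\sS$-subquotients ``split off or cancel in a controlled way'' is the statement to be proved, and it is what the paper delegates to \cite[Lemma 6.5]{CSP20} as the start of an induction. By contrast, your arguments for~(4) and~(5) are fine and coincide with the standard argument the paper intends (cf.\ the proof of Proposition~\ref{prop:SMC-SMS-well-defined}). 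The paper's own proof of this lemma is essentially by citation, so for (1)--(3) you are attempting more than the paper writes down, but as it stands the two steps above are not closed.
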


\begin{proof}
Statement \eqref{thick-contain} can be argued as in Claims A and B in the proof of \cite[Theorem 6.6]{CSP20}; see also \cite[Lemma 3.4]{Jin2}.
Statements \eqref{susp-intersect} and \eqref{cosusp-intersect} can be argued using \cite[Lemma 6.5]{CSP20} as the start of an induction.
Statements \eqref{susp-finite} and \eqref{cosusp-finite} follow immediately from conditions $(1)$ and $(2)$ in Setup ~\ref{blanket-setup} using a standard argument; cf.\ the proof of Proposition~\ref{prop:SMC-SMS-well-defined}.
\end{proof}

\begin{theorem}[{\cite[Theorem 3.1]{Jin2}}] \label{thm:SMCreduction}
Let $\sS$ and $\sZ$ be as in Setup~\ref{blanket-setup}. Then there is a bijection
\begin{align*}
\{ \text{simple-minded collections in $\sD$ containing $\sS$} \} & \bij  \{ \text{simple-minded collections in $\sZ$} \}. \\
\sT                                                                                               & \longmapsto  \sT \setminus \sS 
\end{align*}
\end{theorem}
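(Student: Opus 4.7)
The plan is to mirror the proof of the analogous simple-minded-system reduction given in \cite[Theorem 6.6]{CSP20}, exploiting Lemma~\ref{lem:prelim} which has been pre-packaged for exactly this purpose. I shall define a candidate inverse $\Psi \colon \sR \mapsto \sS \cup \sR$, then verify that $\Phi \colon \sT \mapsto \sT \setminus \sS$ and $\Psi$ are both well defined; bijectivity is then immediate on the level of sets.

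\textbf{Well-definedness of $\Phi$.} Let $\sT$ be a simple-minded collection in $\sD$ containing $\sS$ and set $\sR \coloneqq \sT \setminus \sS$. Orthogonality between $\sS$ and $\sR$ in $\sD$ forces $\sR \subseteq \sZ$. To check that $\sR$ is $\infty$-orthogonal in $(\sZ,\extn{1})$, I would argue that for any $r_1,r_2\in\sR$ and $i\ne0$ one has $\Hom_\sZ(r_1,r_2\extn{i})=\Hom_\sD(r_1,\Sigma^i r_2)=0$; this uses the defining approximation triangles for $\extn{1}$ together with the fact that $r_1\in{}\orth(\Sigma^{\le0}\sS)\cap(\Sigma^{\ge0}\sS)\orth$, which kills the contributions of the $\extn{\sS}$-approximation summands. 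Generation is the substantive step: given $z\in\sZ$, use $\sD=\bigcup_{i\ge j}\Sigma^i\extn{\sT}*\cdots*\Sigma^j\extn{\sT}$ to write $z$ as an iterated extension of $\Sigma^{k}\sT$'s; the contributions from $\Sigma^k\sS$ must vanish upon truncating into $\sZ$, and Lemma~\ref{lem:prelim}\eqref{susp-intersect}--\eqref{cosusp-intersect} let one replace $\Sigma$-towers in $\sD$ by $\extn{1}$-towers in $\sZ$, showing $z\in\extnZ{\Sigma^{\bZ}\sR}$, i.e.\ $\thick{\sZ}{\sR}=\sZ$.

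\textbf{Well-definedness of $\Psi$.} Given a simple-minded collection $\sR$ in $\sZ$, set $\sT\coloneqq\sS\cup\sR$. The orthogonality $\Hom_\sD(\Sigma^i\sS,\sR)=0=\Hom_\sD(\Sigma^i\sR,\sS)$ for all $i\in\bZ$ holds because $\sR\subseteq\sZ$. The $\infty$-orthogonality of $\sR$ in $\sD$ is recovered from the identity $\Hom_\sD(r_1,\Sigma^i r_2)=\Hom_\sZ(r_1,r_2\extn{i})$ that I used above, in the opposite direction. The generation claim $\thick{\sD}{\sT}=\sD$ is the core point: by Lemma~\ref{lem:prelim}\eqref{susp-finite}--\eqref{cosusp-finite}, $\susp_\sD\sS$ is contravariantly finite and $\cosusp_\sD\sS$ covariantly finite in $\sD$, so for $d\in\sD$ one obtains truncations placing a middle piece into $\sZ={}\orth(\Sigma^{\le0}\sS)\cap(\Sigma^{\ge0}\sS)\orth$; this middle piece then lies in $\thick{\sZ}{\sR}$, and invoking Lemma~\ref{lem:prelim}\eqref{thick-contain} converts the $\extn{1}$-tower back into a $\Sigma$-tower over $\sT$ in $\sD$, exhibiting $d\in\thick{\sD}{\sT}$.

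\textbf{Mutual inverse.} Since $\sS\cap\sR=\varnothing$ in both directions (in $\Phi$ by construction, and in $\Psi$ because $\sR\subseteq\sZ$ forces $\Hom_\sD(s,s)=0$ for any $s\in\sS\cap\sR$, contradicting $s\ne0$), the equalities $\Psi\Phi(\sT)=(\sT\setminus\sS)\cup\sS=\sT$ and $\Phi\Psi(\sR)=(\sS\cup\sR)\setminus\sS=\sR$ are tautological.

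\textbf{Main obstacle.} The delicate part is the generation arguments in both directions, specifically the translation between towers built with $\Sigma$ in $\sD$ and towers built with $\extn{1}$ in $\sZ$. Lemma~\ref{lem:prelim}\eqref{thick-contain}--\eqref{cosusp-intersect} is exactly the dictionary one needs, but writing the inductive step cleanly requires care with how the $\extn{\sS}$-approximation triangles that define $\extn{1}$ interact with the $\sS$-orthogonality defining $\sZ$; everything else is a direct adaptation of the arguments in \cite{CSP20} and avoids Verdier localisation altogether.
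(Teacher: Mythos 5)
Your second half (well-definedness of $\Psi$) and the mutual-inverse remark do match the paper's argument: the paper likewise decomposes $d\in\sD$ using the t-structures $(\susp_\sD \sS, (\Sigma^{\geq 0}\sS)\orth)$ and $({}\orth(\Sigma^{\leq 0}\sS), \cosusp_\sD\sS)$ to get $\sD = (\susp_\sD\sS) * \sZ * (\cosusp_\sD\sS)$ and then uses Lemma~\ref{lem:prelim}\eqref{thick-contain} to convert $\extn{1}$-towers into $\Sigma$-towers. The problem is in the other direction, the well-definedness of $\Phi$. There you propose to prove $\thick{\sZ}{\sR}=\sZ$ directly by taking a $\sT$-tower for $z\in\sZ$ in $\sD$ and ``truncating into $\sZ$'', with Lemma~\ref{lem:prelim}\eqref{susp-intersect}--\eqref{cosusp-intersect} serving as the dictionary. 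This step does not work as described. There is no exact functor $\sD\to\sZ$ available once Verdier localisation is renounced (that functor is precisely what Jin uses to push a filtration of $z$ down to $\sZ$), and the truncation functors of the t-structures involved are not triangulated functors, so they do not carry a tower for $z$ to a tower of its truncations. Moreover, Lemma~\ref{lem:prelim}\eqref{susp-intersect}--\eqref{cosusp-intersect} only applies to objects of $\sZ$ lying in the one-sided, unshifted subcategories $\susp_\sD\sT$ or $\cosusp_\sD\sT$; a general $z\in\sZ$ sits in $\Sigma^n\extn{\sT}*\cdots*\Sigma^m\extn{\sT}$ with $m<0<n$, and you cannot shift to reduce to the one-sided case because $\sZ$ is not $\Sigma$-stable, while the orthogonality defining $\sZ$ (against $\Sigma^{\geq 0}\sS$ and $\Sigma^{\leq 0}\sS$) gives no control over the $\Sigma^{<0}\sS$- and $\Sigma^{>0}\sS$-pieces occurring in a two-sided tower. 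So ``the contributions from $\Sigma^k\sS$ must vanish'' is exactly the assertion that needs proof, and it is not a routine extension of the quoted lemma.

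The paper avoids this entirely by not proving generation in $\sZ$ directly: it verifies instead the hypotheses of the characterisation in Proposition~\ref{prop:smc-char}, namely that $\sR$ is an $\infty$-Riedtmann configuration in $\sZ$ (i.e.\ ${}\orth(\sR\extn{<0})\cap(\sR\extn{\geq 0})\orth=0$, deduced from $\sT$ being $\infty$-Riedtmann in $\sD$ via dimension shifting) and that $\susp_\sZ\sR$ is contravariantly finite and $\cosusp_\sZ\sR$ covariantly finite in $\sZ$. The substantive content is the approximation step: starting from a decomposition triangle of $z$ with respect to $({}\orth(\cosusp_\sD\sT),\cosusp_\sD\sT)$, one forms the cone of a minimal right $\extn{\sS}$-approximation and proves, via an octahedral diagram, minimality, and \cite[Theorem 2.11]{CSP20}, that the third term lies in $\cosusp_\sD\sT\cap\sZ=\cosusp_\sZ\sR$; generation of $\sZ$ by $\sR$ then comes for free from Proposition~\ref{prop:smc-char}. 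Your sketch contains neither the Riedtmann verification nor any functorial-finiteness argument in $\sZ$, and its substitute for them is the unjustified tower-truncation step above; this is the genuine gap you would need to fill, either by reinstating the localisation functor or by carrying out the approximation argument of the paper.
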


\begin{proof}
Let $\sT$ be a simple-minded  collection in $\sD$ containing $\sS$. We want to show that $\sR \coloneqq \sT \setminus \sS$ is a simple-minded collection in $\sZ$. We will use the characterisation of simple-minded collections in  Proposition~\ref{prop:smc-char}. 

Firstly, observe that $\sR$ is an $\infty$-orthogonal collection in $\sZ$, for instance, by using the dimension shifting argument in the proof of \cite[Theorem 6.6]{CSP20}.

The next step is to show that ${}\orth (\sR \extn{< 0}) \cap (\sR \extn{\geq0})\orth = 0$. Suppose $z \in \sZ$ is such that $z \in {}\orth (\sR \extn{< 0}) \cap (\sR \extn{\geq0})\orth$. Since $\Hom_\sD (z \extn{>0}, \sR) \simeq \Hom_\sD (\Sigma^{>0} z, \sR)$ and $\Hom_\sD (\sR, z \extn{\leq 0}) \simeq \Hom_\sD (\sR, \Sigma^{\leq 0} z)$ by the dimension-shifting argument above, it follows that $z \in {}\orth (\Sigma^{<0} \sR) \cap (\Sigma^{\geq 0} \sR)\orth$. Since $z \in \sZ$, it follows that $z \in {}\orth (\Sigma^{<0} \sT) \cap (\Sigma^{\geq 0} \sT)\orth$, and so $z = 0$, as $\sT$ is $\infty$-Riedtmann in $\sD$ by Proposition~\ref{prop:smc-char}.

Finally, we show that $\cosusp_\sZ \sR$ is covariantly finite in $\sZ$.  
By Lemma~\ref{lem:prelim}\eqref{thick-contain}, we have $\cosusp_\sZ \sR \subseteq \cosusp_\sD \sT$. By Lemma~\ref{lem:prelim}\eqref{cosusp-intersect}, we have $(\cosusp_\sD \sT )\cap \sZ = \cosusp_\sZ \sR$. Since $\sT$ is a simple-minded collection in $\sD$, $\cosusp_\sD \sT$ is covariantly finite in $\sD$. Therefore, for $z \in \sZ \subseteq \sD$, we can take a decomposition triangle, $\trilabel{x}{z}{t_z}{f}$, in $\sD$ with $x \in {}\orth(\cosusp_\sD \sT) \subseteq {}\orth(\Sigma^{\leq 0} \sS)$. An inspection of the resulting long exact sequence shows that $x \in (\Sigma^{\geq 0} \sS)\orth$, and hence $x \in \sZ$. Now taking the cone of a minimal right $\extn{\sS}$-approximation $s_f \too t_z$ of $t_z$ gives a triangle in $\sZ$:
\[
x \rightlabel{f} z \too z_f \too x\extn{1}.
\]
We claim that the third term $z_f$ lies in  $(\cosusp_\sD \sT) \cap \sZ$ and hence in $\cosusp_\sZ \sR$, which makes the triangle above into a decomposition triangle for $z$ in $\sZ$ because $x \in {}\orth (\cosusp_\sD \sT) \subseteq {}\orth (\cosusp_\sZ \sR)$. In particular, showing that $\cosusp_\sZ \sR$ is covariantly finite in $\sZ$. The argument for the contravariant finiteness of $\susp_\sZ \sR$ in $\sZ$ is similar. Now applying Proposition~\ref{prop:smc-char}, we conclude that $\sR$ is a simple-minded collection in $\sZ$.

Therefore, we need to establish the claim above. Since $t_z \in \cosusp_\sD \sT$, there is a triangle $\tri{t'}{t_z}{t''}$ with $t' \in \extn{\sT}$ and $t'' \in \cosusp_\sD \Sigma^{-1} \sT$. Combining this triangle with the one coming from the minimal right $\extn{\sS}$-approximation gives the following commutative diagram by the octahedral axiom:
\[
\xymatrix@!R=8px{
                                                 & s_f \ar@{=}[r] \ar[d]_{g}  & s_f \ar[d]                      & \\
\Sigma^{-1} t'' \ar[r] \ar@{=}[d] & t'  \ar[r] \ar[d]            &  t_z \ar[r] \ar[d]  &  t'' \ar@{=}[d] \\
\Sigma^{-1} t'' \ar[r]                  & t_1 \ar[r] \ar[d]           & z_f \ar[r] \ar[d]              & t'' \\
                                                & \Sigma s_f \ar@{=}[r] & \Sigma s_f     &
}
\]
Examining the long exact sequence obtained by applying $\Hom_\sD(\sS,-)$ to the lower horizontal triangle shows that $g \colon s_f \to t'$ is a right $\extn{\sS}$-approximation. If it were not minimal, then $s_f \to t_z$ would also fail to be minimal. Therefore $g \colon s_f \to t'$ is a minimal right $\extn{\sS}$-approximation and $t_1 \in \extn{\sT}$ by \cite[Theorem 2.11]{CSP20}. Hence, $z_f \in \cosusp_\sD \sT$, as required.

Conversely, suppose $\sR$ is a simple-minded collection in $\sZ$. We will show that $\sT \coloneqq \sS \cup \sR$ is a simple-minded collection in $\sD$. The proof that $\sT$ is an $\infty$-orthogonal collection is similar to the corresponding argument above in the other direction. 

Let $d$ be an object in $\sD$. By Lemma~\ref{lem:prelim}\eqref{susp-finite} and \eqref{cosusp-finite}, we have that $\susp_\sD \sS$ is contravariantly finite in $\sD$ and $\cosusp_\sD \sS$ is covariantly finite in $\sD$. Decomposing first with respect to the t-structure $(\susp_\sD \sS, (\Sigma^{\geq 0} \sS)\orth)$ and then the t-structure $({}\orth(\Sigma^{\leq 0} \sS), \cosusp_\sD \sS)$ gives the following triangles,
\[
\tri{x}{d}{y} 
\quad \text{and} \quad
\tri{z}{y}{v},
\]
in which $z \in \sZ = {}\orth(\Sigma^{\leq 0} \sS) \cap (\Sigma^{\geq 0} \sS)\orth$, $v \in \cosusp_\sD \sS$ and $x \in \susp_\sD \sS$. This shows that $\sD = (\susp_\sD \sS) * \sZ * (\cosusp_\sD \sS)$, cf.\ \cite[Proposition 3.3]{Jin2}. Now, Lemma~\ref{lem:prelim}\eqref{thick-contain}, shows that $\sZ = \thick{\sZ}{\sR} \subseteq \thick{\sD}{\sT}$, from which it follows that $\sD = \thick{\sD}{\sT}$, and we conclude that $\sT$ is a simple-minded collection.
\end{proof}

%======================================================================================
% the bibliography

\bigskip

{\small
\begin{flalign*}
\text{Email addresses: } & \text{\tt r.coelhosimoes@lancaster.ac.uk} & \\ 
                         & \text{\tt d.pauksztello@lancaster.ac.uk,} & \\ 
                         & \text{\tt david.ploog@uis.no}, & \\  
                         & \text{\tt alexandra.zvonareva@mathematik.uni-stuttgart.de} &
\end{flalign*}}


\begin{thebibliography}{99}
%======================================================================================

\raggedright
  
\bibitem{AI}
T. Aihara, O. Iyama, {\it Silting mutation in triangulated categories}, J. Lond.\ Math.\ Soc.\ (2) \textbf{85} (2012), no.\ 3,  633--668,
also \harxiv{1009.3370}  

\bibitem{Armstrong}
D.~Armstrong, {\it Generalized noncrossing partitions and combinatorics of Coxeter groups}, Mem.\ Amer.\ Math.\ Soc.\ {\bf 202} (2009), no.\ 949, x+159 pp,
also \harxiv{math/0611106}.

\bibitem{Asai}
S. Asai, {\it Semibricks}, Int.\ Math.\ Res.\ Not., DOI 10.1093/imrn/rny150 (2018), 
also \harxiv{1610.05860}.

\bibitem{AST}
I.~Assem, M. J.~Souto Salorio, S.~Trepode, {\it Ext-projectives in suspended subcategories}, J. Pure Appl.\ Algebra {\bf 212} (2008), 423--434.

\bibitem{ABMW}
C. Athanasiadis, T. Brady, J. McCammond, J. Watt, {\it $h$-vectors of generalized associahedra and noncrossing partitions}, Int.\ Math.\ Res.\ Not.\ (2006), Art.\ ID 69705, 28pp,
also \harxiv{math/0602293}.

\bibitem{AS}
M.~Auslander, S. O.~Smal\o , {\it Preprojective modules over Artin algebras}, J. Algebra {\bf 66} (1980), 61--122.

\bibitem{Bass}
H. Bass, ``Algebraic $K$-theory'', W. A. Benjamin, Inc., New York- Amsterdam, 1968.

\bibitem{BDSW}
B.~Baumeister, M.~Dyer, C.~Stump, P.~Wegener, {\it A note on the transitive Hurwitz action on decompositions of parabolic Coxeter elements}, Proc.\ Amer.\ Math.\ Soc.\ Ser.\ B {\bf 1} (2014), 149--154,
also \harxiv{1402.2500}.

\bibitem{BBD}
A.A.~Beilinson, J.~Bernstein, P.~Deligne, {\it Faisceaux pervers}, Ast\'erique {\bf 100} (1982).

\bibitem{Bondal}
A.~I.~Bondal, {\it Representations of associative algebras and coherent sheaves}, Math.\ USSR Izv.\ \textbf{34} (1990), no.\ 1, 23--42.

\bibitem{Bondal-Kapranov}
A.~I.~Bondal, M.~M.~Kapranov, {\it Representable functors, Serre functors, and mutations}, Math.\ USSR Izv.\ \textbf{35} (1990), no.\ 3, 519--541.

\bibitem{Bondal-vdB}
A.~I.~Bondal, M.~van den Bergh, {\it Generators and representability of functors in commutative and noncommutative geometry}, Moscow Math.\ J. {\bf 3} (2003), no.\ 1, 1--36,
also \harxiv{math/0204218}.

\bibitem{Bondarko10}
M.~V.~Bondarko, {\it Weight structures vs.\ t-structures; weight filtrations, spectral sequences, and complexes (for motives and in general)}, J. K-Theory {\bf 6} (2010), no.\ 3, 387--504,
also \harxiv{0704.4003}.

\bibitem{Bondarko19}
M.~V.~Bondarko, {\it From weight structures to (orthogonal) t-structures and back}, \harxiv{1907.03686}.

\bibitem{Bridgeland}
T.~Bridgeland, {\it Stability conditions on triangulated categories}, Ann.\ of Math.\ {\bf 166} (2007), 317--345, 
also \harxiv{math/0212237}.

\bibitem{Brightbill1}
J.~Brightbill, {\it The differential graded stable category of a self-injective algebra}, J. Pure Appl.\ Algebra {\bf 225} (2021), no.\ 10, Paper No.\ 106708, 34 pp, also \harxiv{1811.08992}.

\bibitem{Brightbill2}
J.~Brightbill, {\it Perverse equivalences and dg stable combinatorics}, \harxiv{2001.00193}.

\bibitem{BMRRT}
A.~B.~Buan, R.~Marsh, M.~Reineke, I.~Reiten, G.~Todorov, {\it Tilting theory and cluster combinatorics}, Adv.\ Math.\ {\bf 204} (2006), no.\ 2, 572--618, 
also \harxiv{math/0402054}.

\bibitem{BRT}
A.~B.~Buan, I.~Reiten, H.~Thomas, {\it From $m$-clusters to $m$-noncrossing partitions via exceptional sequences}, Math.\ Z. {\bf 271} (2012), no.\ 3-4, 1117--1139,
also \harxiv{1007.0928}.

\bibitem{CS12}
R.~Coelho Sim\~oes, {\it Hom-configurations and noncrossing partitions}, J. Algebraic Combin.\ {\bf 35} (2012), no.\ 2, 313--343, 
also \harxiv{1012.1276}.

\bibitem{CS15}
R.~Coelho Sim\~oes, {\it Hom-configurations in triangulated categories generated by spherical objects}, J. Pure and Appl.\ Algebra {\bf 219} (2015), no.\ 8, 3322-3336, 
also \harxiv{1312.4769}.

\bibitem{CS17}
R.~Coelho Sim\~oes, {\it Mutations of simple-minded systems in Calabi-Yau categories generated by a spherical object}, Forum Math.\ {\bf 29}(5) (2017), 1065--1081, 
also \harxiv{1512.09321}.

\bibitem{CSP16}
R.~Coelho Sim\~oes, D.~Pauksztello, {\it Torsion pairs in a triangulated category generated by a spherical object}, J. Algebra {\bf 448} (2016), 1--47, 
also \harxiv{1404.4623}.

\bibitem{CSP20}
R.~Coelho Sim\~oes, D.~Pauksztello, {\it Simple-minded systems and reduction for negative Calabi-Yau triangulated categories}, Trans.\ Amer.\ Math.\ Soc.\ {\bf 373} (2020), no.\ 4, 2463--2498,
also \harxiv{1808.02519}.

\bibitem{CB}
W.~Crawley-Boevey, {\it Exceptional sequences of representations of quivers}, In: V.~Dlab, H.~Lenzing, (eds.) Representations of Algebras, Proc.\ Ottawa 1992. Canadian Math.\ Soc.\ Conf.\ Proc, vol.\ 14, pp.\ 117--124. Amer.\ Math.\ Soc., Providence (1993).

\bibitem{Dugas}
A.~Dugas, {\it Torsion pairs and simple-minded systems in triangulated categories}, Appl.\ Categ.\ Structures {\bf 23} (2015), no.\ 3, 507--526, 
also \harxiv{1207.7338}.

\bibitem{FZ}
S. Fomin, A. Zelevinsky, {\it Cluster algebras I. Foundations}, J. Amer.\ Math.\ Soc.\ {\bf 15} (2002), no.\ 2, 497--529,
also \harxiv{math/0104151}.

\bibitem{GL}
W.~Geigle, H.~Lenzing, {\it Perpendicular categories with applications to representations and sheaves}, J. Algebra {\bf 144} (1991), 273--343.

\bibitem{Happel}
D.~Happel, ``Triangulated Categories in the Representation Theory of Finite Dimensional Algebras'', London Mathematical Society Lecture Note Series {\bf 119}, Cambridge University Press (1988).

\bibitem{HJY}
T.~Holm, P. J\o rgensen, D. Yang, {\it Sparseness of t-structures and negative Calabi-Yau dimension in triangulated categories generated by a spherical object}, Bull.\ London Math.\ Soc.\ {\bf 45} (2013), no.\ 1, 120--130,
also \harxiv{1108.2195}.

\bibitem{HK}
A.~Hubery, H.~Krause, {\it A categorification of non-crossing partitions}, J. Eur.\ Math.\ Soc.\ {\bf 18} (2016), no.\ 10, 2273--2313,
also \harxiv{1310.1907}.

\bibitem{IS}
K.~Igusa, R.~Schiffler, {\it Exceptional sequences and clusters}, with an appendix written by the authors and H. Thomas, J. Algebra {\bf 323} (2010), no.\ 8, 2183--2202,
also \harxiv{0901.2590}.

\bibitem{Ingalls-Thomas}
C.~Ingalls, H.~Thomas, {\it Noncrossing partitions and representations of quivers}, Compositio Math.\ {\bf 145} (2009), 1533--1562,
also \harxiv{math/0612219}.

\bibitem{IJ}
O.~Iyama, H.~Jin, {\it Positive Fuss-Catalan numbers and simple-minded systems in negative Calabi-Yau categories}, \harxiv{2002.09952}.

\bibitem{IYa}
O.~Iyama, D.~Yang, {\it Silting reduction and Calabi-Yau reduction of triangulated categories}, Trans.\ Amer.\ Math.\ Soc.\ {\bf 370} (2018), no.\ 11, 7861--7898,
also \harxiv{1408.2678}.

\bibitem{Jin1}
H.~Jin, {\it Cohen-Macaulay differential graded modules and negative Calabi-Yau configurations}, Adv.\ Math.\ {\bf 374} (2020), 107338, 59 pp, also \harxiv{1812.03737}.

\bibitem{Jin2}
H.~Jin, {\it Reductions of triangulated categories and simple-minded collections}, \harxiv{1907.05114}.

\bibitem{Jorgensen}
P.~J\o rgensen, {\it Auslander-Reiten triangles in subcategories}, J. K-Theory {\bf 3} (2009), no.\ 3, 583--601,
also \harxiv{math/0606647}.

\bibitem{Kawamata}
Y.~Kawamata, {\it Derived categories and birational geometry}, in: ``Algebraic geometry -- Seattle 2005'', Part 2, pp.\ 655--665, Proc.\ Sympos.\ Pure Math.\ {\bf 80}, Part 2, Amer.\ Math.\ Soc., Providence, RI, 2009,
also \harxiv{0804.3150}.

\bibitem{Keller}
B.~Keller, {\it On triangulated orbit categories}, Doc.\ Math.\ {\bf 10} (2005), 551--581,
also \harxiv{math/0503240}.

\bibitem{Koenig-Liu}
S.~Koenig, Y. Liu, {\it Simple-minded systems in stable module categories}, Q. J. Math.\ {\bf 63}(3) (2012), 653--674, 
also \harxiv{1009.1427}.

\bibitem{KY}
S.~Koenig, D.~Yang, {\it Silting objects, simple-minded collections, t-structures and co-t-structures for finite-dimensional algebras}, Doc.\ Math.\ {\bf 19} (2014), 403--438,
also \harxiv{1203.5657}.

\bibitem{Kohler}
C.~K\"ohler, {\it Thick subcategories of finite algebraic triangulated categories}, \harxiv{1010.0146}.

\bibitem{MSSS}
O.~Mendoza Hern\'andez, E.~C.~S\'aenz Valadez, V.~Santiago Vargas and M.~J.~Souto Salorio, {\it Auslander-Buchweitz context and co-t-structures}, Appl.\ Categ.\ Structures {\bf 21}(2013), no.\ 5, 417--440,
also \harxiv{1002.4604}.

\bibitem{Nakaoka}
H.~Nakaoka, {\it A simultaneous generalisation of mutation and recollement of cotorsion pairs on a triangulated category}, Appl.\ Categ.\ Structures {\bf 26} (2018), no.\ 3, 491--544,
also \harxiv{1512.02173}.

\bibitem{NSZ}
P.~Nicol\'as, M.~Saor\'in, A.~Zvonareva, {\it Silting theory in triangulated categories with coproducts}, J. Pure Appl.\ Algebra {\bf 223} (2019), 2273--2319,
also \harxiv{1512.04700}.

\bibitem{Pauk}
D.~Pauksztello, {\it Compact corigid objects in triangulated categories and co-t-structures}, Cent.\ Eur.\ J. Math.\ {\bf 6} (2008), 25--42,
also \harxiv{0705.0102}.

\bibitem{Po}
Z.~Pogorzaly, {\it Algebras stably equivalent to self-injective special biserial algebras}, Comm.\ Algebra {\bf 22} (1994), no.\ 4, 1127--1160.

\bibitem{Reading}
N. Reading, {\it Coxeter sortable elements and noncrossing partitions}, Trans.\ Amer.\ Math.\ Soc.\ {\bf 359} (2007), no.\ 12, 5931--5958,
also \harxiv{math/0507186}.

\bibitem{RvdB}
I.~Reiten, M. van den Bergh, {\it Noetherian hereditary abelian categories satisfying Serre duality}, J. Amer.\ Math.\ Soc.\ \textbf{15} (2002), no.\ 2, 295--366,
also \harxiv{math/9911242}.     

\bibitem{Ringel}
C.~M.~Ringel, {\it The braid group action on the set of exceptional sequences of a hereditary Artin algebra}, Abelian Group Theory and Related Topics (Oberwolfach, 1993). Contemp.\ Math.\ {\bf 171}, 339--352. Amer.\ Math.\ Soc., Providence, RI, 1994.

\bibitem{Rudakov}
A.~N.~Rudakov, {\it Exceptional collections, mutations and helices}, in ``Helices and vector bundles'', 1--6, London Math.\ Soc.\ Lecture Note Ser.\ {\bf 148}, Cambridge University Press, Cambridge, 1990.

\bibitem{SZ}
M.~Saor\'in, A.~Zvonareva, {\it Lifting of recollements and gluing of partial silting sets}, \harxiv{1809.03243}.

\bibitem{Schofield}
A.~Schofield, {\it Semi-invariants of quivers}, J. London Math.\ Soc.\ {\bf 43} (1991), 385--395.

\bibitem{STW}
C.~Stump, H.~Thomas, N.~Williams, {\it Cataland: Why the Fuss?},
to appear Mem.\ Amer.\ Math.\ Soc.,
also \harxiv{1503.00710}.

\end{thebibliography}
\end{document}